            \newtheorem{thm}{Theorem}[section]
          \newtheorem{prop}[thm]{Proposition}
          \newtheorem{lem}[thm]{Lemma}
          \newtheorem{rem}[thm]{Remark}
       	 \newcommand{\R}{\mathbb{R}}
	 \newcommand{\E}{\mathrm{E}}
\begin{document}
          \title{Metastable dynamics for hyperbolic variations of the Allen--Cahn equation\thanks{This work was partially supported by the Italian Project FIRB 2012 ``Dispersive dynamics: Fourier Analysis and Variational Methods''.}}


          \author{Raffaele Folino\thanks{Dipartimento di Ingegneria e Scienze dell'Informazione e Matematica, Universit\`a degli Studi dell'Aquila,
          							via Vetoio, 67010 Coppito (L'Aquila) AQ, Italy,
          							(raffaele.folino@univaq.it).}
          \and{Corrado Lattanzio}\thanks{ Dipartimento di Ingegneria e Scienze dell'Informazione e Matematica, Universit\`a degli Studi dell'Aquila, 		
          via Vetoio, 67010 Coppito (L'Aquila) AQ, Italy,
          							(corrado@univaq.it),}
								{http://people.disim.univaq.it/$\thicksim$corrado/.}
          \and{Corrado Mascia}\thanks{Dipartimento di Matematica,
		Sapienza Universit\`a di Roma, P.le Aldo Moro, 2, 00185 Roma, Italy, (mascia@mat.uniroma1.it), }
		{http://www1.mat.uniroma1.it/$\sim$mascia/.}}





         \pagestyle{myheadings} \markboth{Metastable dynamics for hyperbolic variations of the Allen--Cahn equation}{Raffaele Folino, Corrado Lattanzio and Corrado Mascia} 
         
         \maketitle

          \begin{abstract}
	Metastable dynamics of a hyperbolic variation of the Allen--Cahn equation with homogeneous Neumann
	boundary conditions are considered.
	Using the ``dynamical approach'' proposed by Carr--Pego \cite{Carr-Pego} and Fusco--Hale \cite{Fusco-Hale} to study 
	slow-evolution of solutions in the classic parabolic case, we prove existence and persistence of metastable patterns for
	an exponentially long time. 
	In particular, we show the existence of an ``approximately invariant'' $N$-dimensional manifold $\mathcal{M}_{{}_0}$
	for the hyperbolic Allen--Cahn equation: if the initial datum is in a tubular neighborhood of $\mathcal{M}_{{}_0}$,
	the solution remains in such neighborhood for an exponentially long time. 
	Moreover, the solution has $N$ transition layers and the transition points move with exponentially small velocity. 
	In addition, we determine the explicit form of a system of ordinary differential equations describing the motion of
	the transition layers and we analyze the differences with the corresponding motion valid for the parabolic case.
          \end{abstract}
          
\begin{keywords}
Allen--Cahn equation; Metastability; Singular perturbations.
\end{keywords}

 \begin{AMS}
 35L72, 35B25, 35K57.
\end{AMS}

\section{Introduction}\label{introduction}

\subsection{Metastability in reaction-diffusion equations}
Reaction-diffusion equations are widely used to describe a variety of phenomena such as pattern formation
and front propagation in biological, chemical and physical systems.
When the model under study is characterized by the presence of competing equilibrium states, a crucial question is to
describe the interaction and the dynamics of space occupation by the equilibria.
A basic prototype is the \emph{Allen--Cahn equation}, which has the form
\begin{equation}\label{AllenCahnMulti}
	u_t+\mathcal{L}(u)=0\qquad\textrm{where}\quad
	\mathcal{L}(u):=-\varepsilon^2 \Delta u+f(u),
\end{equation}
and it has been originally proposed in \cite{Allen-Cahn} to describe the motion of antiphase boundaries in iron alloys. 
Such equation has an associated energy functional
\begin{equation*}
	\int \bigg\{\tfrac{1}{2} \varepsilon^2|\nabla u|^2+F(u)\bigg\}dx,
\end{equation*}
where the {\it potential} $F$ is a primitive of $f$ and integration in space is performed in the domain under consideration.
For the Allen--Cahn model, the function $F$ is assumed to be a double-well potential with wells of equal depth.
As a consequence, the reaction function $f$ has a cubic-type behavior with two stable and one unstable
equilibria, usually normalized as $\pm 1$ and $0$, respectively.

In the absence of diffusion, viz. $\varepsilon=0$, the space variable $x$ becomes an external parameter and   
solutions generically converge pointwise to functions with values in $\{-1,+1\}$ with sharp transition layers
generated at points where the initial datum changes sign.
For small $\varepsilon>0$, if the initial datum is a small perturbation of a function with values in $\{-1,+1\}$
with well-separated transition regions, diffusion determines in a short time-scale a smoothed version of the original
configuration and, on a longer time-scale, layers interact giving rise to front motion.
When the space variable is one-dimensional, starting from \cite{Bron-Kohn,Carr-Pego,Fusco-Hale},
it has been shown that, as long as layers are well-separated the interaction force is very weak and the
consequent motion is very slow.
The meaning of weak/slow can be quantified more precisely, as discussed in what follows.
Postponing such details, in the regime $\varepsilon\to 0^+$, the original configuration is preserved for a long time
and thus such behavior has been classified as {\it metastability}.

Many papers have been devoted to slow motion analysis for the Allen--Cahn equation providing precise description
of the relation between the size of the diffusivity $\varepsilon$ and the time-scale of the dynamics. 
A complete list of references would be prohibitive. 
Here, we only quote the analysis on generation, persistence and
annihilation of metastable patterns performed in \cite{Chen}.
A large class of different evolution PDEs, concerning many different areas, exhibits the phenomenon of metastability.
Without claiming to be complete, we list some of the principal models that have been analyzed:
scalar conservation laws \cite{FLMS17, KreiKrei86, LafoOMal95, Mascia-Strani, ReynWard95}, 
the Cahn--Hilliard equation \cite{AlikBateFusc91, Bates-Xun1, Bates-Xun2, Bron-Hilh, Pego89},
Gierer--Meinhardt and Gray--Scott systems \cite{Sun-War-Rus}, 
Keller--Segel chemotaxis models \cite{Dol-Sch, Pop-Hillen}, 
general gradient flows \cite{Otto-Rez}, high-order systems \cite{Kal-VdV-Wan},
gradient systems with equal depth multiple-well potentials \cite{Be-Or-Sm,Be-Sm},
Cahn--Morral systems \cite{Grant}, the Jin--Xin system \cite{Strani}.

The aforementioned bibliography is confined to one-dimensional models; 
however, there is a vast literature of works about motion of interfaces in several space dimensions, 
where the effect of the curvature of the interfaces turns out to be relevant for the dynamics. 
In particular, for the Allen--Cahn equation, we recall the works \cite{Bron-Kohn2,Chen2,demot-sch}, where it has been
shown that steep interfaces are generated in a short time with subsequent motion governed by mean curvature flow. 

The present paper is devoted to the analysis of metastability in a hyperbolic framework.
Precisely, given $\varepsilon>0$, $\tau_0\in (0,+\infty]$, $f\,:\,\mathbb{R}\rightarrow\mathbb{R}$
and $g\,:\,\mathbb{R}\times(0,\tau_0) \rightarrow\mathbb{R}$, 
we consider here the \emph{hyperbolic Allen--Cahn equation}
\begin{equation}\label{hyp-al-caMulti}
	\tau u_{tt}+g(u,\tau)u_t=\varepsilon^2 \Delta u-f(u).
\end{equation}
The function $f$ is required to be the derivative of a double well potential $F$ with non-degenerate minima 
of same depth, and the function $g$ is assumed to be strictly positive, uniformly with respect to $u$;
namely, we assume
\begin{align}
	&F(\pm1)=F'(\pm1)=0, \quad F''(\pm1)>0, \quad F(u)>0 \; \mbox{ for } u\neq\pm1, \label{hypf}\\
	&g(u,\tau)\geq c_g>0\qquad\forall\, u, \label{hypg}
\end{align}
where the constant $c_g$ may depend on $\tau$.
The uniform positivity of $g$ in \eqref{hypg} is crucial, because it guarantees the dissipative nature of the model.
If, in addition, $g(u,\tau)\to 1$ as $\tau\rightarrow0$, we formally recover \eqref{AllenCahnMulti}
from \eqref{hyp-al-caMulti} in the (singular) limit. 

In the case $g\equiv 1$, equation \eqref{hyp-al-caMulti} can be obtained by adding a nonlinear zero order
perturbation to the damped wave operator $\tau\partial_t^2 +\partial_t - \varepsilon^2\Delta$.
For such a choice, many studies have been devoted to the stability of fronts ---mainly in one space dimension--- for bistable
or monostable reaction term $f$ (see \cite{Gallay-Joly} and references therein).
Interface formation has been analyzed in \cite{Hil-Nara} in the singular limit $\varepsilon\to 0$ for space dimension
equal to $2$ or $3$, showing that motion is governed by mean curvature flow, as is the case for the corresponding
parabolic model.

The choice of the hyperbolic variation \eqref{hyp-al-caMulti}  is motivated by the observation that there are
different ways for modeling transport mechanisms. 
The one at the base of \eqref{AllenCahnMulti} is the classical Fourier law, originally proposed
for heat conduction and then extended to many other different fields, which prescribes
the instantaneous proportionality between the flux $v$ of a quantity with ``density'' $u$ and its gradient, $v = - \varepsilon^2\nabla u$.
Such choice has the advantage of providing a simple equation enjoying a number of useful properties (smoothing
effects, self-similarity, \dots), but, at the same time, it has a number of drawbacks, the best known being the presence
of infinite speed of propagation.
Still in the framework of heat conduction modeling, following some ideas developed by Maxwell in the context of kinetic theories,
Cattaneo proposed in  \cite{Cat} a different law for the heat flux $v$, based on the assumption that the equilibrium between flux
and gradient of the unknown is asymptotical with a time-scale measured by the relaxation parameter $\tau>0$, that is
\begin{equation}\label{MClaw}
	\tau v_t + v = - \varepsilon^2 \nabla u
	\qquad \textrm{(Maxwell--Cattaneo law)}
\end{equation}
(an extensive discussion is reported in \cite{JP89a,JP89b}).
In the one-dimensional case, the diffusion equation given by the law \eqref{MClaw} has also a probabilistic interpretation,
appearing in the description of \emph{correlated random walks} (see \cite{Gol, Kac, Tay}) to be compared with the
standard \emph{random walk}, which gives raise to the standard parabolic diffusion equation.

Criticisms to the application of the use of Fourier-type law has been given also in modeling reaction-diffusion
phenomena (see \cite{Hade99, Holmes}).
In the presence of a reaction term described by the function $f$, application of the Maxwell--Cattaneo law gives
\begin{equation*}
	\tau u_{tt}+\bigl\{u+\tau f(u)\bigr\}_t=\varepsilon^2 \Delta u-f(u),
\end{equation*}
to be considered as a modification of the standard Allen--Cahn equation when $f$ satisfies \eqref{hypf}
(see also \cite{DunbOthm86}, for different origins of the same equation).
This equation fits into \eqref{hyp-al-caMulti} with the choice $g(u,\tau):=1+\tau f'(u)$.
We refer to this specific model as the \emph{Allen--Cahn equation with relaxation}, reminiscent
of the relaxation-type law \eqref{MClaw}.
Existence and nonlinear stability of traveling wave solutions for this equation has been analyzed
in detail in \cite{LMPS} for general bistable reaction terms in one space dimension.

\subsection{Presentation of the main result}
This study is devoted to the one-dimensional case, so that the hyperbolic Allen--Cahn equation reads as
\begin{equation}\label{hyp-al-ca}
	\tau u_{tt}+g(u,\tau)u_t+\mathcal{L}(u)=0,
\end{equation}
corresponding to the parabolic Allen--Cahn equation
\begin{equation}\label{AllenCahn}
	u_t+\mathcal{L}(u)=0,
\end{equation}
where $\mathcal{L}(u):=-\varepsilon^2 u_{xx}+f(u)$.
Specifically, we are interested in the limiting behavior of the solutions as $\varepsilon\rightarrow0$ with the aim of extending
the metastable dynamics for \eqref{AllenCahn} to the hyperbolic case \eqref{hyp-al-ca}
and focusing the attention on eventual differences.

In \cite{Folino}, adapting the \emph{energy approach} proposed by Bronsard and Kohn \cite{Bron-Kohn} for the parabolic
equation \eqref{AllenCahn}, the first author has shown that, if the initial profile $u_0$ has a transition layer structure and
the initial velocity $v_0$ is small, then the solution maintains the transition layer structure on a time scale of order $\varepsilon^{-k}$
with $k$ arbitrary.
The energy approach has also been applied to Cahn--Hilliard equation in \cite{Bron-Hilh}.
Grant \cite{Grant} improved this method to prove exponentially slow motion for Cahn--Morral systems. 

A different procedure, proposed by Carr--Pego in \cite{Carr-Pego} and Fusco--Hale in \cite{Fusco-Hale}, permits to prove existence and persistence of metastable
states for the Allen--Cahn equation \eqref{AllenCahn} for a time proportional to $e^{C/\varepsilon}$.
This strategy provides also an explicit differential equation for the dynamics of the transition layer positions (far from collapses).
The method is based on the construction of an $N$-dimensional base manifold $\mathcal{M}$ consisting of functions which
approximate metastable states with $N$ transition layers.
The manifold is not invariant, but if the initial datum is in a small neighborhood of $\mathcal{M}$, then the solution remains
near the manifold for a time proportional to $e^{C/\varepsilon}$.
Based on these ideas, slow motion results have been proved for the Cahn--Hilliard equation
by Alikakos et al. \cite{AlikBateFusc91} and by Bates and Xun \cite{Bates-Xun1,Bates-Xun2}.
In particular, the last ones use the same manifold constructed in \cite{Carr-Pego}.

Here, we adapt the method of \cite{Carr-Pego} to the hyperbolic Allen--Cahn equation \eqref{hyp-al-ca} embedding
the base manifold $\mathcal{M}$ in an extended phase space determined by the presence of the additional unknown $v=u_t$
as suggested by the first-order form of equation \eqref{hyp-al-ca} given by
\begin{equation}\label{system-u-v}
	\left\{\begin{aligned}
		& u_t  = v\\
		& \tau v_t  = -\mathcal{L}(u)-g(u,\tau)v.
		\end{aligned}\right.
\end{equation}
System \eqref{system-u-v}, considered here for $t>0$ and $x\in(0,1)$, is complemented with homogeneous Neumann
boundary conditions
\begin{equation}\label{Neumann}
	u_x(0,t)=u_x(1,t)=0, \qquad \quad t>0,
\end{equation}
and initial conditions 
\begin{equation}\label{initial}
	u(x,0)=u_0(x), \quad v(x,0)=v_0(x), \qquad \quad x\in (0,1).
\end{equation}
The initial-boundary value problem \eqref{system-u-v}-\eqref{Neumann}-\eqref{initial} 
is globally well-posed for positive times in $H^1\times L^2$.
In particular, if 
\begin{equation*}
	(u_0,v_0)\in\mathcal{D}=\left\{(u,v)\in H^2\times H^1: u_x(0)=u_x(1)=0\right\},
\end{equation*}
the solution $(u,v)$ is classical and belongs to $C\left([0,\infty),\mathcal{D}\right)\cap C^1\left([0,\infty),H^1\times L^2\right)$
(among others, see \cite[Appendix A]{Folino}).
Then, our aim is to describe the dynamics of such globally defined solution, at least for a class of
``well-prepared'' initial data.

Under assumptions \eqref{hypf}--\eqref{hypg}, the hyperbolic equation \eqref{hyp-al-ca} supports traveling wave
solutions connecting the equilibria $-1$ and $1$, i.e. solutions of the form $u(x,t)=\Phi(x-ct)$ such that $\Phi(\pm\infty)=\pm 1$,
if and only if $c=0$. 
Indeed, substituting the traveling wave ansatz in the equation, we obtain
\begin{equation*}
	(\varepsilon^2-c^2\tau)\Phi''+c\,g(\Phi,\tau)\Phi'-f(\Phi)=0,
\end{equation*}
and thus, multiplying by $\Phi'$ and integrating over $\R$, we get
\begin{equation*}
	c\int_{\R} g(\Phi,\tau)(\Phi')^2\,d\xi=F(+1)-F(-1),
\end{equation*}
from which we deduce, under \eqref{hypf} and \eqref{hypg}, that the velocity $c$ is zero.
With such choice, it is well-known that, up to translation, there is a unique solution to the problem
\begin{equation}\label{Fi(x)}
	\varepsilon^2\Phi''-f(\Phi)=0, \qquad
	\Phi(x)\rightarrow\pm1 \quad \mbox{ as } \quad x\rightarrow\pm\infty.
\end{equation}
Normalizing $\Phi$ by adding the condition $\Phi(0)=0$, transitions layer from $-1$ to $+1$ (or viceversa)
are described by $\Phi(\pm(x-\bar x))$ for both equations \eqref{AllenCahn} and \eqref{hyp-al-ca}.

Steady states $\Phi$ are at the base of the construction of the base manifold $\mathcal{M}$
which we sketch here (for precise definitions, see Section \ref{preliminaries}). 
Fix $N\in\mathbb{N}$ and $\varepsilon>0$.
Given a configuration $\bm{h}=(h_1,\dots,h_N)$ of $N$ layer positions (with $h_j<h_{j+1}$), we construct a function $u^{\bm{h}}$
which approximates a metastable state with transition points at $h_1,\dots,h_N$, by piecing together approximated versions of $\Phi$,
that is $u^{\bm h}(x)\approx \Phi(x-h_j)$ or $\Phi(h_j-x)$ for $x\approx h_j$ (see Figure \ref{u^h-fig}).
\begin{figure}[htbp]
\includegraphics[width=13cm]{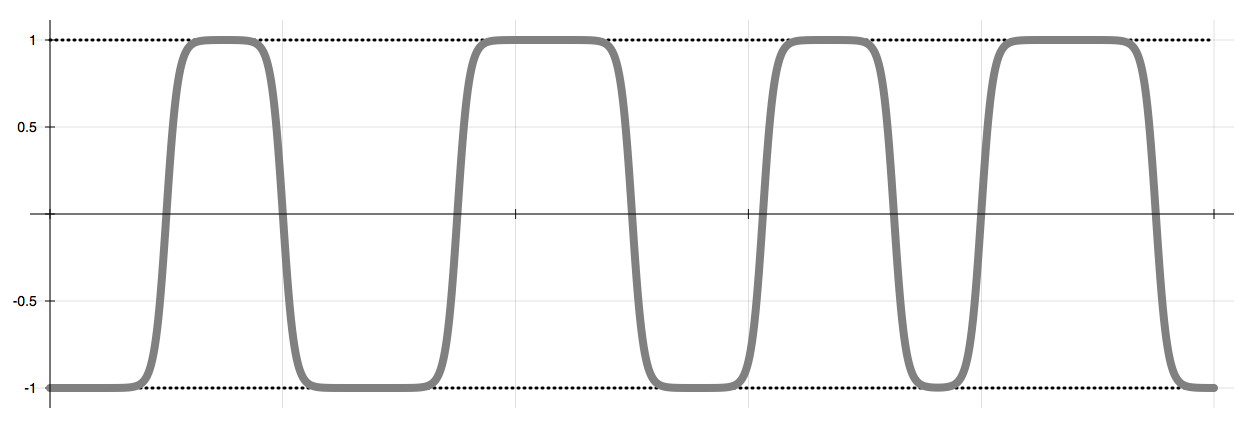}
\caption{Example of a function $u^{\bm h}(x)$ with $N=8$.
}
\label{u^h-fig}
\end{figure}

Then, we consider the slow evolution of solutions when the transition points are well separated one from the other and bounded
away from the boundary points $0$ and $1$.
For fixed (small) $\rho>0$, the admissible layer positions lie in the set
\begin{equation*}
	\Omega_\rho:=\bigl\{{\bm h}\in\mathbb{R}^N\, :\,0<h_1<\cdots<h_N<1,\;
		h_{j+1}-h_{j}>\varepsilon/\rho\mbox{ for } j=0,\dots,N\bigr\},
\end{equation*}
where $h_0:=-h_1$, $h_{N+1}:=2-h_N$ and the {\it base manifold} is $\mathcal{M}:=\{u^{\bm h}\,:\,{\bm h}\in\Omega_\rho\}$.

In what follows, we fix a minimal distance $\delta>0$ with $\delta<1/N$ and we consider the parameters $\varepsilon$
and $\rho$ such that
\begin{equation}\label{triangle}
	0<\varepsilon<\varepsilon_0\qquad\textrm{and}\qquad \delta<\frac{\varepsilon}{\rho}<\frac{1}{N},
\end{equation}
for some $\varepsilon_0>0$ to be chosen appropriately small.
In such a way, the parameters $\rho$ and $\varepsilon$ have the same order of magnitude.
All of the subsequent estimates depend on $N$ and $\delta$.

Denoted by $\langle\cdot,\cdot\rangle$ the inner product in $L^2(0,1)$,
to restrict the attention to a neighborhood of $\mathcal{M}$, we introduce the decomposition $u=u^{\bm h}+w$,
where $w$ are such that the following {\it orthogonality condition} holds
\begin{equation}\label{ortogonale}
	\langle w, k^{\bm h}_j\rangle=0, \qquad \quad \mbox{for } \quad j=1,\dots,N,
\end{equation} 
for some appropriate  approximate tangent vectors $k^{\bm h}_j$.
Then, setting
\begin{equation*}
	H^2_N:=\bigl\{w\in H^2(0,1)\, : \, w_x(0)=w_x(1)=0,\, \langle w,k^{\bm h}_j\rangle=0 \quad \mbox{ for } \; j=1,\dots,N\bigr\},
\end{equation*}
we consider triples $(\bm{h}, w, v)$ in the set $\Omega_\rho\times H^2_N\times L^2(0,1)$ and
the corresponding {\it extended base manifold}
\begin{equation*}
	\mathcal{M}_{{}_{0}}:=\mathcal{M}\times\{0\}=\left\{(u^{\bm h},0): u^{\bm h}\in{\mathcal{M}}\right\}.
\end{equation*}
Next, we choose a tubular neighborhood of $\mathcal{M}_{{}_{0}}$: given $\Gamma,\rho>0$, we set
\begin{equation*}
	\mathcal{Z}_{{}_{\Gamma,\rho}}:=\bigl\{(u,v)\,:\,u=u^{\bm h}+w,\,
	 ({\bm h},w,v)\in\overline{\Omega}_\rho\times H^2_N\times L^2(0,1),\,
	 \mathcal{E}^{\bm{h}}[w,v]\leq\Gamma \Psi({\bm h})\bigr\},
\end{equation*}
with the {\it energy functional} $\mathcal{E}^{\bm{h}}$ and the {\it barrier function} $\Psi$ defined by
\begin{align}
	\mathcal{E}^{\bm{h}}[w,v]&:=\tfrac12\int_0^1\bigl\{\varepsilon^2 w_x^2+f'(u^{\bm h})w^2\bigr\}dx
		+\tfrac12\tau\|v\|^2+\varepsilon\tau\langle w,v\rangle,	
		\label{energy} \\
	\Psi({\bm h})&:=\sum_{j=1}^N{\langle\mathcal{L}\bigl(u^{\bm h}\bigr),k^{\bm h}_j\rangle}^2,
		\label{barrier}
\end{align}
where $\|\cdot\|$ is the $L^2-$norm.
Our main result states that the channel $\mathcal{Z}_{{}_{\Gamma,\rho}}$ is invariant for an exponentially long time
if the parameters $\Gamma$ and $\rho$ are appropriately chosen.
In other words, the manifold $\mathcal{M}_{{}_{0}}$ is approximately invariant for the hyperbolic system \eqref{system-u-v}.
\vspace{0.2cm}
\begin{thm}\label{main}
Let $f\in C^2$ and $g(\cdot,\tau)\in C^1$ with $\tau\in (0,\tau_0)$ be such that $f=F'$ and \eqref{hypf}-\eqref{hypg} hold.
Given $N\in\mathbb{N}$ and $\delta\in(0,1/N)$, there exist $\Gamma_2>\Gamma_1>0$
and $\varepsilon_0>0$ (possibly depending on $\tau$) such that, if $\varepsilon,\rho$  satisfy \eqref{triangle},
$\Gamma\in[\Gamma_1,\Gamma_2]$ and the initial datum satisfies
\begin{equation*}
	(u_0,v_0)\in\,\stackrel{\circ}{\mathcal{Z}}_{{}_{\Gamma,\rho}}=\bigl\{(u,v)\in\mathcal{Z}_{{}_{\Gamma,\rho}}\, : \, {\bm h}\in\Omega_\rho
	\;\;\textrm{and}\;\; \mathcal{E}^{\bm{h}}[w,v]<\Gamma\Psi({\bm h})\bigr\},
\end{equation*}
then the solution $(u,v)$ to the initial-boundary value problem \eqref{system-u-v}-\eqref{Neumann}-\eqref{initial}
remains in $\mathcal{Z}_{{}_{\Gamma,\rho}}$ for a time $T_\varepsilon>0$, and there exists $C>0$ (possibly depending on $\tau$)  such that
for any $t\in[0,T_\varepsilon]$
\begin{align}
	\varepsilon^{1/2}\|w\|_{{}_{L^\infty}}+\|w\|+\tau^{1/2}\|v\|&\leq C\exp(-A\ell^{\bm h}/\varepsilon), \label{umenouh}\\
	|{\bm h}'|_{{}_{\infty}} &\leq C(\varepsilon/\tau)^{1/2}\exp(-A\ell^{\bm h}/\varepsilon), \label{|h'|<exp-intro}
\end{align}
where $A:=\sqrt{\min\{f'(-1),f'(1)\}}$, $\ell^{\bm h}:=\min\{h_j-h_{j-1}\}$ and $|\cdot|_{{}_{\infty}}$
denotes the maximum norm in $\mathbb{R}^N$.
Moreover, 
\begin{equation*}
		T_\varepsilon\geq C(\tau/\varepsilon)^{1/2}(\ell^{\bm h(0)}-\varepsilon/\rho)\exp(A\delta /\varepsilon).
\end{equation*}
\end{thm}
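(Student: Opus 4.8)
The plan is to carry over the Carr--Pego ``dynamical'' scheme \cite{Carr-Pego} to the first--order form \eqref{system-u-v}, turning the invariance of the channel $\mathcal{Z}_{{}_{\Gamma,\rho}}$ into a Gronwall-type differential inequality for the extended energy $\mathcal{E}^{\bm h}$, and then reading \eqref{umenouh}, \eqref{|h'|<exp-intro} and the lifespan bound off that inequality. First I would use the orthogonality conditions \eqref{ortogonale} to set up, as long as $\|w\|$ stays small, a change of variables $(u,v)\leftrightarrow(\bm h,w,v)\in\Omega_\rho\times H^2_N\times L^2(0,1)$ by the implicit function theorem, invertibility resting on the near-diagonal dominance of $M_{ij}(\bm h):=\langle\partial_{h_i}u^{\bm h},k^{\bm h}_j\rangle$. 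Substituting $u=u^{\bm h}+w$ into \eqref{system-u-v} and writing $\mathcal{L}(u^{\bm h}+w)=\mathcal{L}(u^{\bm h})+\mathcal{L}^{\bm h}w+Q(w)$, with $\mathcal{L}^{\bm h}w:=-\varepsilon^2w_{xx}+f'(u^{\bm h})w$ and $Q(w)$ the quadratic remainder, one obtains
\[
w_t=v-\sum_i h_i'\,\partial_{h_i}u^{\bm h},\qquad \tau v_t=-\mathcal{L}(u^{\bm h})-\mathcal{L}^{\bm h}w-Q(w)-g(u^{\bm h}+w)v,
\]
together with a closed equation for $\bm h'$ obtained by differentiating \eqref{ortogonale} in time,
\[
\sum_i h_i'\,M_{ij}(\bm h)=\langle v,k^{\bm h}_j\rangle+\langle w,\partial_t k^{\bm h}_j\rangle,\qquad j=1,\dots,N.
\]
I would then invoke the quantitative facts established in the preliminary section: the spectral gap/coercivity $\langle\mathcal{L}^{\bm h}w,w\rangle\ge c_0(\|w\|^2+\varepsilon^2\|w_x\|^2)$ on $H^2_N$, hence $\mathcal{E}^{\bm h}[w,v]\asymp\|w\|^2+\varepsilon^2\|w_x\|^2+\tau\|v\|^2$ once the hypocoercive cross term $\varepsilon\tau\langle w,v\rangle$ is absorbed for $\varepsilon$ small; the interpolation bound $\varepsilon\|w\|_{{}_{L^\infty}}^2\le C\,\mathcal{E}^{\bm h}[w,v]$; and the estimates on $\mathcal{M}$: $\|\mathcal{L}(u^{\bm h})\|^2\le C\varepsilon\,\Psi(\bm h)$, $\Psi(\bm h)\le C\exp(-2A\ell^{\bm h}/\varepsilon)$, $|\nabla_{\bm h}\Psi(\bm h)|\le C\varepsilon^{-1}\Psi(\bm h)$, $\|M(\bm h)^{-1}\|\le C\varepsilon$, $\|\partial_{h_i}u^{\bm h}\|+\varepsilon\|(\partial_{h_i}u^{\bm h})_x\|\le C\varepsilon^{-1/2}$, $\|\partial_{h_i}k^{\bm h}_j\|\le C\varepsilon^{-3/2}$.

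The core of the argument is a pair of a priori estimates valid on any time interval where the solution stays in $\mathcal{Z}_{{}_{\Gamma,\rho}}$ with $\bm h\in\Omega_\rho$, so that $\mathcal{E}^{\bm h}[w,v]\le\Gamma\Psi(\bm h)$ and $\ell^{\bm h}>\delta$. From the $\bm h'$-equation, $\|M^{-1}\|\le C\varepsilon$ and the smallness of $w$ let one absorb the $\bm h'$-term on the right, leaving $|\bm h'|_{{}_\infty}\le C\varepsilon^{1/2}\|v\|\le C(\varepsilon/\tau)^{1/2}(\mathcal{E}^{\bm h}[w,v])^{1/2}\le C(\varepsilon/\tau)^{1/2}\exp(-A\ell^{\bm h}/\varepsilon)$, which is \eqref{|h'|<exp-intro}; and \eqref{umenouh} follows from coercivity, interpolation and $\mathcal{E}^{\bm h}[w,v]\le\Gamma\Psi(\bm h)\le C\exp(-2A\ell^{\bm h}/\varepsilon)$. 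Next I would differentiate $t\mapsto\mathcal{E}^{\bm h(t)}[w(t),v(t)]$ along the flow: the symmetric contributions $\langle\mathcal{L}^{\bm h}w,v\rangle$ cancel, the genuinely dissipative terms are $-\langle g(u^{\bm h}+w)v,v\rangle\le-c_g\|v\|^2$ (damping) and $-\varepsilon\langle\mathcal{L}^{\bm h}w,w\rangle\le-\varepsilon c_0(\|w\|^2+\varepsilon^2\|w_x\|^2)$ (produced by the cross term), and every remaining term is absorbed: the forcing $-\langle v,\mathcal{L}(u^{\bm h})\rangle-\varepsilon\langle w,\mathcal{L}(u^{\bm h})\rangle$ by Young's inequality together with $\|\mathcal{L}(u^{\bm h})\|^2\le C\varepsilon\Psi(\bm h)$; the $Q(w)$-terms by the smallness of $\|w\|_{{}_{L^\infty}}$; and the coupling terms carrying a factor $\bm h'$ — namely $\varepsilon\tau\|v\|^2$, $\sum_i h_i'\langle\mathcal{L}^{\bm h}w,\partial_{h_i}u^{\bm h}\rangle$ (rewritten via self-adjointness as $\sum_i h_i'\langle w,\partial_{h_i}\mathcal{L}(u^{\bm h})\rangle$, a product of two exponentially small factors), $\varepsilon\tau\sum_i h_i'\langle\partial_{h_i}u^{\bm h},v\rangle$ and $\tfrac12\langle\partial_t f'(u^{\bm h})w,w\rangle$ — because the exponential smallness of $|\bm h'|$ beats any polynomial loss $\varepsilon^{-k}$ once $\ell^{\bm h}>\delta$. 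Collecting, for $\varepsilon<\varepsilon_0(\tau,N,\delta)$ one arrives at
\[
\frac{d}{dt}\mathcal{E}^{\bm h}[w,v]\le -c_1\varepsilon\,\mathcal{E}^{\bm h}[w,v]+C_1\varepsilon\,\Psi(\bm h),
\]
with $c_1,C_1>0$ depending only on $N,\delta,\tau$.

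With this in hand the invariance and the lifespan follow by a barrier (continuity) argument. Set $\Gamma_1:=2C_1/c_1$ and let $\Gamma_2>\Gamma_1$ be large enough to make all the above absorptions and the coercivity/interpolation bounds valid on $\mathcal{Z}_{{}_{\Gamma_2,\rho}}$; both depend only on $N,\delta,\tau$. Fix $\Gamma\in[\Gamma_1,\Gamma_2]$ and $(u_0,v_0)\in\stackrel{\circ}{\mathcal{Z}}_{{}_{\Gamma,\rho}}$, and set $T_\varepsilon:=\sup\{T\ge0:(u(t),v(t))\in\mathcal{Z}_{{}_{\Gamma,\rho}}\ \forall\,t\in[0,T]\}$, which is meaningful because the Cauchy problem \eqref{system-u-v}-\eqref{Neumann}-\eqref{initial} is globally well posed; on $[0,T_\varepsilon)$ the previous step applies. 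If at some $t^*<T_\varepsilon$ one had $\bm h(t^*)\in\Omega_\rho$ and $\mathcal{E}^{\bm h(t^*)}[w,v](t^*)=\Gamma\Psi(\bm h(t^*))$, then, using $|\tfrac{d}{dt}\Psi(\bm h)|\le C\varepsilon^{-1}\Psi(\bm h)|\bm h'|_{{}_\infty}=o(\varepsilon\Psi(\bm h))$ by \eqref{|h'|<exp-intro} with $\ell^{\bm h}>\delta$, the differential inequality gives $\tfrac{d}{dt}(\mathcal{E}^{\bm h}[w,v]-\Gamma\Psi(\bm h))\le -\varepsilon\Psi(\bm h)(c_1\Gamma-C_1+o(1))<0$ since $c_1\Gamma\ge c_1\Gamma_1=2C_1$; hence $\mathcal{E}^{\bm h}[w,v]-\Gamma\Psi(\bm h)$ cannot become positive while $\bm h\in\Omega_\rho$, so the solution can leave $\mathcal{Z}_{{}_{\Gamma,\rho}}$ only when $\ell^{\bm h}$ decreases to $\varepsilon/\rho$. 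Since $t\mapsto\ell^{\bm h(t)}$ is Lipschitz with $|\tfrac{d}{dt}\ell^{\bm h}|\le 2|\bm h'|_{{}_\infty}\le 2C(\varepsilon/\tau)^{1/2}\exp(-A\delta/\varepsilon)$ on $[0,T_\varepsilon)$ by \eqref{|h'|<exp-intro} and $\ell^{\bm h}>\delta$, integration gives $\ell^{\bm h(t)}\ge\ell^{\bm h(0)}-2C(\varepsilon/\tau)^{1/2}\exp(-A\delta/\varepsilon)\,t$, so the value $\varepsilon/\rho$ cannot be reached before $t=\tfrac{1}{2C}(\tau/\varepsilon)^{1/2}(\ell^{\bm h(0)}-\varepsilon/\rho)\exp(A\delta/\varepsilon)$, which is the claimed lower bound for $T_\varepsilon$.

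I expect the delicate point to be the differential inequality, and inside it the \emph{sharp} forcing bound $\|\mathcal{L}(u^{\bm h})\|^2\le C\varepsilon\,\Psi(\bm h)$: since the cross term produces only an $O(\varepsilon)$ dissipation rate for $w$, a merely ``$\le C\,\Psi$'' estimate would force $\Gamma\sim1/\varepsilon$ and destroy the fixed-$\Gamma$ invariance, so one really needs the leading-order structure of the residual $\mathcal{L}(u^{\bm h})$ — concentrated in $O(\varepsilon)$-neighborhoods of the layers — that must be produced in the construction of $\mathcal{M}$. A second, more bookkeeping, obstacle is that the bounds for $\bm h'$ and for $\mathcal{E}^{\bm h}$ are mutually coupled (through the $\bm h'$-equation and through the $\bm h'$-terms in $\tfrac{d}{dt}\mathcal{E}^{\bm h}$), so the estimates above must be run as a single continuity argument, with care that every $\varepsilon^{-k}$ coming from differentiating layer profiles is dominated by the exponential $\exp(-A\delta/\varepsilon)$ available on $\Omega_\rho$.
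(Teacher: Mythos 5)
Your proposal follows the same overall path as the paper: the $(\bm h,w,v)$ decomposition via the orthogonality condition, the extended energy $\mathcal{E}^{\bm h}$ with the hypocoercive cross term $\varepsilon\tau\langle w,v\rangle$, the coercivity/interpolation bounds giving \eqref{umenouh} and \eqref{|h'|<exp-intro} inside the channel, the differential inequality $\frac{d}{dt}\mathcal{E}^{\bm h}\le-c_1\varepsilon\,\mathcal{E}^{\bm h}+C_1\varepsilon\,\Psi$, and a continuity argument that forces the exit through the ``ends'' of $\Omega_\rho$, which, combined with the exponential smallness of $|\bm h'|$, yields the lifespan bound. This is exactly the Carr--Pego scheme transplanted to the hyperbolic setting as in Propositions~\ref{prop:E>}--\ref{prop:d/dtE} and the closing argument of Section~\ref{motion}.

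Two points deserve a correction. First, the claimed bound $|\nabla_{\bm h}\Psi(\bm h)|\le C\varepsilon^{-1}\Psi(\bm h)$, and the ensuing $|\frac{d}{dt}\Psi|\le C\varepsilon^{-1}\Psi|\bm h'|_{\infty}$, are false near $\bm h^e$: since $\Psi=\sum_j(\alpha^{j-1/2}-\alpha^{j+1/2})^2$ has a nondegenerate quadratic zero at $\bm h^e$ while the individual $\alpha^{j\pm1/2}$ do not vanish there, one has $|\nabla_{\bm h}\Psi|\sim\varepsilon^{-1}\Psi^{1/2}\max_j\alpha^{j\pm1/2}$, not $\varepsilon^{-1}\Psi$. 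What the paper actually proves (via \eqref{|dhL(u^h)|} and \eqref{L(u^h)<Psi}) is $|\frac{d\Psi}{dt}|\le C\varepsilon^{-1/2}\bigl\{\varepsilon^{-1}\|\mathcal{L}(u^{\bm h})\|+\exp(-c/\varepsilon)\bigr\}\|\mathcal{L}(u^{\bm h})\|\,\|v\|$, with $\|v\|$ kept explicit so that the damping $-\eta c_g\|v\|^2$ can absorb the Young-conjugate piece, leaving $C\Gamma^2\exp(-c/\varepsilon)\Psi$. Your conclusion $|\frac{d}{dt}\Psi|=o(\varepsilon\Psi)$ can be rescued by inserting the channel bound $\|v\|\lesssim(\Gamma\Psi/\tau)^{1/2}$ in the correct form of the estimate, but the intermediate statement as written is wrong, and a reader would stall on it. Second, your account of $\Gamma_2$ is backwards: the smallness absorptions (on $\|w\|_{L^\infty}$, on the quadratic remainder, on the $\Gamma^2\exp(-c/\varepsilon)\Psi$ term) become \emph{harder}, not easier, as $\Gamma$ grows; the paper's $\Gamma_2$ appears as the larger root of the quadratic $C\exp(-c/\varepsilon)\Gamma^2-\eta\varepsilon\Gamma+C\varepsilon\le0$, and the logic is that one fixes any finite $\Gamma_2>\Gamma_1$ and then shrinks $\varepsilon_0$ (depending on $\Gamma_2$) so that the inequality holds for $\Gamma\in[\Gamma_1,\Gamma_2]$. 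Neither issue changes the route, but both should be stated correctly.
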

\begin{rem}\label{rem:added}
It is worth to observe that in the above theorem, and in general in the whole paper, $\tau$ should be viewed as a \emph{fixed}   parameter in $(0,\tau_0)$, and, as clearly stated, the constants may depend on it.
However, we prefer to make the ratio $\varepsilon/\tau$ appear in the estimates above because the constants  may be chosen uniform with respect to $\tau$ in many cases, as for the relaxation limit $\tau\rightarrow0$ from 
  the hyperbolic equation \eqref{hyp-al-ca} to the parabolic   Allen--Cahn equation \eqref{AllenCahn}, namely for $g(u,\tau)\rightarrow 1$ as $\tau\rightarrow0$; the main examples  in this framework we have already introduced above are $g\equiv 1$ and 
$g(u,\tau)=1+\tau f'(u)$.
  More precisely, 
if     $g(u,\tau)\to 1$ as $\tau\rightarrow0$ in any reasonable way and $u$ is  bounded, 
 then  \eqref{hypg} implies  $0<c_g\leq g(u,\tau)\leq C_g$  with $c_g$ and $C_g$ independent from $\tau$ in a (right) neighborhood of   zero.
With this extra (uniform in $\tau$) control at our disposal, one can follow the  proofs needed to obtain our main result and see that the only dependence in $\tau$ in the bounds for ${\bm h}'$ and $T_\varepsilon$ is through  the  aforementioned ratio $\varepsilon/\tau$, 
which can be used to study  the interplay between the two small parameters $\varepsilon$ and $\tau$ while performing this relaxation limit.
\end{rem}

The strategy to prove Theorem \ref{main} is the following.
Firstly, plugging the decomposition $u=u^{\bm h}+w$ into system \eqref{system-u-v} and using conditions \eqref{ortogonale},
we obtain an  ODE-PDE coupled system describing the dynamics for $({\bm h},w,v)$, see system \eqref{system-w-v-h}.
Then, we show that, if the solution $(u,v)$ belongs to $\mathcal{Z}_{{}_{\Gamma,\rho}}$, the estimates
\eqref{umenouh} and \eqref{|h'|<exp-intro} hold.
Next, we estimate the time $T_\varepsilon$ taken for the solution $(u,v)$ to leave the channel $\mathcal{Z}_{{}_{\Gamma,\rho}}$.
The boundary of $\mathcal{Z}_{{}_{\Gamma,\rho}}$ is the union of two parts: the ``ends'' where ${\bm h}\in\partial\Omega_\rho$,
meaning $h_j-h_{j-1}=\varepsilon/\rho$ for some $j$ and ``sides'' where $\mathcal{E}^{\bm{h}}[w,v]=\Gamma\Psi({\bm h})$.
Using an energy estimate, we infer that the solution can leave $\mathcal{Z}_{{}_{\Gamma,\rho}}$ only through the ends.
Since, for \eqref{|h'|<exp-intro}, the transition points move with exponentially small velocity, the solution $(u,v)$ stays
in the channel for an exponentially long time.

As long as the solution $(u,v)$ remains in the channel $\mathcal{Z}_{{}_{\Gamma,\rho}}$, $u$ is a function with $N$ transition layers.
The estimate \eqref{|h'|<exp-intro} ensures the slow motion of solutions and gives a lower bound on the lifetime of the metastable states. 
In order to give further information on the motion of the transition layers and an upper bound on such lifetime,
we study in detail an approximation of the equation for ${\bm h}$, determined formally by the requirement that
$u(x,t)=u^{\bm h(t)}(x)$ is an exact solution. 
Such a requirement is expected to be appropriate in the limit $\varepsilon\to 0$.
In this way, we obtain a system of ordinary differential
equations for ${\bm h}$ which does not depend on $w$ and $v$ and has the form
\begin{equation}\label{h-eq-intro}
	\tau {\bm h}''+\gamma_\tau {\bm h}'=\mathcal{P}^\ast({\bm h}),
\end{equation}
where $\gamma_\tau:=\overline{g}(\cdot,\tau)$ and
the (weighted) average $\overline{\mathtt{g}}$ of the continuous function $\mathtt{g}$ is given by
\begin{equation*}
	\overline{\mathtt{g}}:=\frac{1}{\|\sqrt{F}\|_{{}_{L^1}}}\int_{-1}^{1} \sqrt{F(s)}\,\mathtt{g}(s)\,ds,
\end{equation*}
and $\mathcal{P}^*$ is a function, depending on $F$. 
Equation \eqref{h-eq-intro} has to be compared with the corresponding one for the parabolic case \eqref{AllenCahn},
which is ${\bm h}'=\mathcal{P}^*({\bm h})$.
For the nonlinear damped wave equation $g\equiv 1$, we  have $\gamma_\tau =1$, while for the Allen--Cahn equation
with relaxation we obtain $\gamma_\tau=1+\tau \overline{f'}$.
Since $\overline{f'}$ is negative, the (physical relevant) relaxation case exhibits smaller friction effects with respect
to the damped one (details in Section \ref{layer}).

System \eqref{h-eq-intro} has a unique equilibrium point $(\bm{h}^e,0)$ where $\bm{h}^e$ is the unique zero
of $\mathcal{P}^*$, that corresponds to the unique stationary solution $u^e$ of \eqref{hyp-al-ca} with $N$
transition layers, normalized by the condition $u(0)<0$, without loss of generality.
In the parabolic case, $\bm{h}^e$ is an unstable equilibrium point with $N$ positive eigenvalues;
whereas, for the hyperbolic model, $(\bm{h}^e,0)$ is an unstable equilibrium point for \eqref{h-eq-intro} with $N$
positive eigenvalues and $N$ negative eigenvalues.

The rest of the paper is organized as follows.
In Section \ref{preliminaries} we give all the definitions, preliminaries and the construction of the manifold $\mathcal{M}$.
Furthermore, we recall all the results of Carr and Pego \cite{Carr-Pego} needed to prove Theorem \ref{main}.
Section \ref{motion} is devoted to the derivation of the equation of motion for the triple $(\bm{h},w,v)$
and to the proof of Theorem \ref{main}.
In Section \ref{layer}, we deduce the approximating equation for ${\bm h}$, we prove that there is a unique equilibrium
point $(\bm{h}^e,0)$ and we study its stability.
Finally, using singular perturbation theory, we show that, for $\tau$ small, if $g$ is uniformly bounded and
$g(u,\tau)\to 1$ a.e.\ as $\tau\rightarrow0$, the behavior of the solution to \eqref{h-eq-intro} is the same
of the parabolic case (see Theorem \ref{thm:tau0}). 

\section{Preliminaries}\label{preliminaries}
Following \cite{Carr-Pego}, we construct the base manifold and collect estimates
that we will use in the proof of our results.
For fixed $\rho>0$, we recall the definition
\begin{equation*}
	\Omega_\rho:=\bigl\{{\bm h}\in\mathbb{R}^N\, :\,0<h_1<\cdots<h_N<1,\quad
		 h_j-h_{j-1}>\varepsilon/\rho\mbox{ for } j=1,\dots,N+1\bigr\},
\end{equation*}
where $h_0:=-h_1$ and $h_{N+1}:=2-h_N$.
By construction, if $\rho_1<\rho_2$, then  $\Omega_{\rho_1}\subset \Omega_{\rho_2}$.

The idea is to associate to any $\bm h\in\Omega_\rho$ a function $u^{\bm h}=u^{\bm h}(x)$ which approximates a metastable
state with $N$ transition points at $h_1,\dots,h_N$ by matching appropriate steady states of equation \eqref{AllenCahn}.
The collection of $u^{\bm h}$ determines a $N$-dimensional manifold.
In order to describe the dynamics in a neighborhood of such manifold, the framework has to be complemented with a
projection which permits to separate the solution into a component on the manifold and a corresponding remainder.
For the Allen--Cahn equation \eqref{AllenCahn}, two different constructions have been proposed in \cite{Carr-Pego}
and \cite{Fusco-Hale}.

In \cite{Fusco-Hale}, Fusco and Hale use functions $\Phi(\pm(x-\bar x))$ with $\Phi$ the solution of \eqref{Fi(x)}
previously defined, and set
\begin{equation*}
	U^{\bm h}(x):=\Phi\left((x-h_j)(-1)^{j+1}\right), \quad x\in[h_{j-1/2},h_{j+1/2}], \quad j=1,\dots,N,
\end{equation*}
where
\begin{equation*}
	h_{j+1/2}:=\tfrac12(h_j+h_{j+1})\qquad j=0,\dots,N,
\end{equation*}
(note that $h_{1/2}=0$, $h_{N+1/2}=1$).
Hence, they obtain a manifold $\mathcal{M}^{\textrm{\tiny FH}}$ composed by continuous functions
$U^{\bm h}$ with a piecewise continuous first order derivative that jumps at $h_{j+1/2}$, $j=1,\dots,N-1$. 
In particular, the elements of the manifold belong to $H^1$ and not to $H^2$ (if $N>1)$.
In addition, they construct a tubular neighborhood of $\mathcal{M}^{\textrm{\tiny FH}}$ with coordinates $(\bm h,V)$
by setting 
\begin{equation*}
	u=U^{\bm h}+V\qquad\textrm{with}\quad \langle V,U^{\bm h}_j\rangle=0 \quad j=1,\dots, N,
\end{equation*}
where $U^{\bm h}_j$ are the derivatives of $U^{\bm h}$ with respect to $h_j$.
By construction, $U^{\bm h}_j$ have disjoint supports and $U^{\bm h}_j(x)=-U^{\bm h}_x(x)$ for all $x\in(h_{j-1/2},h_{j+1/2})$. 
In \cite{Fusco-Hale}, it is also conjectured that equation \eqref{AllenCahn} has an invariant manifold
$\mathcal{M}^{\textrm{\tiny FH}}_\ast$ near $\mathcal{M}^{\textrm{\tiny FH}}$ and that this manifold
$\mathcal{M}^{\textrm{\tiny FH}}_\ast$ is a graph over $\mathcal{M}^{\textrm{\tiny FH}}$.
Fusco and Hale did not prove the existence of the invariant manifold, but assuming existence, they calculated a
first approximation for $\mathcal{M}^{\textrm{\tiny FH}}_\ast$ and for the differential equation for $\bm{h}$ describing
the reduced flow.
They also conjectured that metastable states with $N$ transitions are associated with the unstable manifold of 
stationary solutions of \eqref{AllenCahn} having $N$ layers.

Both conjectures have been proved in \cite{Carr-Pego2} using a different base manifold, 
previously constructed in \cite{Carr-Pego}.
The approach used by Carr and Pego is based on a different choice and matching of steady states, which provides
functions $u^{\bm h}$, composing the base manifold $\mathcal{M}^{\textrm{\tiny CP}}$, which are smooth in both 
$x$ and $\bm{h}$.
The crucial difference with respect to the Fusco--Hale approach, resides in the fact that,
for $\mathcal{L}(u):=-\varepsilon^2 u_{xx}+f(u)$, 
\begin{equation*}
	\begin{aligned}
	&\mathcal{L}(U^{\bm h})=0	&\quad\textrm{and}\quad &\mathcal{L}(u^{\bm h})\neq 0
		&\qquad &\textrm{for}\quad x\approx h_j,\\
	&\mathcal{L}(U^{\bm h})\neq 0	&\quad\textrm{and}\quad &\mathcal{L}(u^{\bm h})=0	
		&\qquad &\textrm{for}\quad x\approx h_{j+1/2},
	\end{aligned}
\end{equation*}
with major consequences on the location of $\mathcal{L}(u^{\bm h})$ with respect to the tangent space to
$\mathcal{M}^{\textrm{\tiny CP}}$ at $u^{\bm h}$, as will be clear in the following presentation.

In this paper, we follow the framework established by Carr and Pego adapting it to the case of the hyperbolic
Allen--Cahn equation \eqref{hyp-al-ca}.
Since the equation we consider corresponds to the system \eqref{system-u-v}, the dynamics is determined by an additional unknown,
the time derivative $v=u_t$, and thus the base manifold $\mathcal{M}^{\textrm{\tiny CP}}$ has to be embedded in a extended 
vector space.
Here, taking advantage of the fact that we are looking for a manifold that is only approximately invariant, we perform this extension 
in a trivial way, considering the {\it extended base manifold}
$\mathcal{M}^{\textrm{\tiny CP}}_{{}_{0}}:=\mathcal{M}^{\textrm{\tiny CP}}\times\{0\}$.

From now on, we drop the letters $CP$ in the symbol used for the manifolds.

\subsection{Carr--Pego base manifold}
Given $L>0$, let $\varphi(\cdot,L,+1)$ be the solution to 
\begin{equation}\label{eq:z}
	-\varphi_{xx}+f(\varphi)=0, \qquad \quad
	\varphi\bigl(-\tfrac12L\bigr)=\varphi\bigl(\tfrac12L\bigr)=0,
\end{equation}
with $\varphi>0$ in $(-\tfrac12L,\tfrac12L)$, and let $\varphi(\cdot,L,-1)$ be the solution to \eqref{eq:z} with $\varphi<0$ in $(-\tfrac12L,\tfrac12L)$. 
Observe that if $\varphi$ satisfies \eqref{eq:z}, then 
\begin{equation}\label{eq:varfi}
	\varphi_x^2=2\{F(\varphi)-F(\varphi(0))\}.
\end{equation}
Using this formula, we can prove existence and uniqueness of the solutions $\varphi(\cdot,L,\pm1)$.

\begin{lem}
Let $f=F',$ with $F$ a smooth function satisfying \eqref{hypf}.
There exists $L_0>0$ such that, if $L>L_0$, then the functions $\varphi(\cdot,L,\pm1)$ are well-defined
and, denoting by 
\begin{equation*}
	M_\pm(L):=\max_x|\varphi(x,L,\pm1)|=|\varphi(0,L,\pm1)|,
\end{equation*} 
we have that $M_\pm$ is an increasing function of $L$ and $M_\pm(+\infty)=1$.
\end{lem}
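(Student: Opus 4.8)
The plan is to analyze the boundary value problem \eqref{eq:z} via the first-integral identity \eqref{eq:varfi}, treating the value $a:=\varphi(0)$ at the midpoint as a shooting parameter. By symmetry of the problem (the equation and boundary conditions are invariant under $x\mapsto-x$), any solution that is positive (resp.\ negative) on $(-\tfrac12L,\tfrac12L)$ must be even, attaining its maximum (resp.\ minimum) modulus at $x=0$; hence $M_+(L)=\varphi(0,L,+1)$ and $M_-(L)=|\varphi(0,L,-1)|$. I will treat the case $\varphi>0$ in detail, the case $\varphi<0$ being entirely analogous (replace $F$ by $u\mapsto F(-u)$, which still satisfies \eqref{hypf}). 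On the half-interval $(0,\tfrac12L)$ the solution decreases from $a$ to $0$, so $\varphi_x<0$ there and \eqref{eq:varfi} gives $\varphi_x=-\sqrt{2(F(\varphi)-F(a))}$. Separating variables and integrating from $x=0$ (where $\varphi=a$) to $x=\tfrac12L$ (where $\varphi=0$) yields the relation
\begin{equation}\label{eq:period}
	\tfrac12 L \;=\; \int_0^{a}\frac{ds}{\sqrt{2\bigl(F(s)-F(a)\bigr)}}\;=:\;T(a).
\end{equation}
So existence and uniqueness of $\varphi(\cdot,L,+1)$ amounts to showing that, for $a$ ranging over a suitable interval, $T(a)$ is a well-defined, continuous, strictly monotone function with the right range; the solution for a given $L$ is then recovered by inverting \eqref{eq:period} and solving the resulting separable ODE, which automatically produces a smooth function satisfying \eqref{eq:z}.

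The key steps are then as follows. First I would fix the natural range of the shooting parameter: since $F(u)>0$ for $u\ne\pm1$, $F(\pm1)=0$, and we need $F(s)-F(a)>0$ for $0<s<a$ so that the integrand in \eqref{eq:period} is real, the monotonicity of $F$ on $(0,1)$ — which follows from $f=F'$ having the cubic-type sign pattern forced by \eqref{hypf} (namely $F'<0$ on $(0,1)$, with $F'(1)=0$, $F''(1)>0$) — shows $a$ must lie in $(0,1)$. Second, I would check that the integral $T(a)$ converges for every $a\in(0,1)$: near $s=a$ the integrand behaves like $(a-s)^{-1/2}$ (since $F(s)-F(a)\sim -F'(a)(a-s)$ with $F'(a)<0$), which is integrable, and near $s=0$ the integrand is bounded because $F(0)>F(a)$; thus $T:(0,1)\to(0,\infty)$ is well-defined. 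Third, I would establish the limiting behavior: as $a\to0^+$, $T(a)\to T(0)=\int_0^0\cdots=0$ more carefully, one shows $T(a)\to 0$ since the interval of integration shrinks while the integrand stays controlled away from the singularity; and as $a\to1^-$, $T(a)\to+\infty$ because the integrand develops a non-integrable-in-the-limit singularity at $s=1$ (where $F(1)=F'(1)=0$ forces $F(s)-F(a)\to F(s)\sim\tfrac12 F''(1)(1-s)^2$, giving an integrand $\sim [F''(1)]^{-1/2}(1-s)^{-1}$ near $s=1$). Fourth, I would prove $T$ is continuous and strictly increasing on $(0,1)$ — continuity by dominated convergence after a change of variables $s=a\sigma$ that removes the $a$-dependence from the endpoint, and strict monotonicity either by the same substitution followed by differentiation under the integral sign, or by the standard phase-plane / time-map argument for the conservative system $\varphi_{xx}=f(\varphi)$. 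Given these four facts, the intermediate value theorem and strict monotonicity furnish a unique $a=a(L)\in(0,1)$ solving $T(a)=L/2$ precisely when $L/2>\inf T = 0$, i.e.\ for all $L>0$; taking $L_0=0$ (or any convenient positive value if a uniform lower bound is needed elsewhere) completes existence and uniqueness. Finally, $M_+(L)=a(L)$ is the composition of the strictly increasing $L\mapsto L/2$ with the inverse of the strictly increasing $T$, hence strictly increasing, and $M_+(+\infty)=\lim_{L\to\infty}a(L)=1$ since $T(a)\to\infty$ exactly as $a\to1^-$.

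The main obstacle I anticipate is the careful asymptotic analysis of the time map $T(a)$ near the singular endpoint $a=1$, where the degeneracy $F(1)=F'(1)=0$ changes the nature of the integrable singularity at $s=a$ into a logarithmic divergence as $a\uparrow1$; getting the blow-up rate right (and hence confirming $M_+(+\infty)=1$ rather than some value $<1$) requires splitting the integral near $s=1$ and using the quadratic vanishing $F(s)\sim\tfrac12 F''(1)(1-s)^2$ guaranteed by $F''(1)>0$. The strict monotonicity of $T$, while standard, is the other point needing genuine care: the cleanest route is the substitution $s=a\sigma$, $\sigma\in(0,1)$, rewriting
\begin{equation}\label{eq:rescaled}
	T(a)=\int_0^1\frac{a\,d\sigma}{\sqrt{2\bigl(F(a\sigma)-F(a)\bigr)}},
\end{equation}
and then differentiating in $a$, where one must show the integrand's $a$-derivative has a sign; this uses the convexity/monotonicity properties of $F$ near its well at $1$ and is the place where hypothesis \eqref{hypf} is used most essentially. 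Everything else — convergence of the integrals, continuity, recovery of a genuine $C^\infty$ solution from the separated ODE, and the symmetry reduction to the midpoint — is routine.
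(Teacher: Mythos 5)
Your overall strategy --- reduce to the period map $T(a)$ via the first integral \eqref{eq:varfi}, show it diverges as $a\to1^-$, and invert near the wall --- is the same one the paper uses, so the assessment really hinges on the two auxiliary claims you make about $T$.

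The first claim, that $T(a)\to 0$ as $a\to 0^+$ because ``the interval of integration shrinks while the integrand stays controlled,'' is wrong in the Allen--Cahn setting. Since $0$ is the unstable equilibrium of the bistable nonlinearity, one has $F'(0)=0$ and $F''(0)<0$, and the singular set $s=a$ collapses onto the whole shrinking interval rather than staying harmless. Substituting $s=a\sigma$ gives
\begin{equation*}
	T(a)=\int_0^1\frac{a\,d\sigma}{\sqrt{2\bigl(F(a\sigma)-F(a)\bigr)}}
	\;\longrightarrow\;
	\int_0^1 \frac{d\sigma}{\sqrt{-F''(0)\,(1-\sigma^2)}}
	=\frac{\pi}{2\sqrt{-F''(0)}}>0
	\qquad\text{as }a\to0^+,
\end{equation*}
i.e.\ the half-period of the linearized oscillation about $0$, not zero. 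Thus $L=2T(a)$ is bounded below by a positive constant, and your suggested ``$L_0=0$ works'' is false; the strictly positive threshold $L_0$ in the statement is there precisely because of this. The conclusion of the lemma is unaffected, but the step as written is incorrect.

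The second claim, that $T$ is \emph{globally} strictly increasing on $(0,1)$, is stronger than needed and is not justified by what you sketch. Time maps of conservative problems are not monotone in general, and the hypotheses \eqref{hypf} constrain $F$ only near $\pm1$; ``differentiate the rescaled integral \eqref{eq:rescaled} and check a sign'' will not go through without further structural assumptions on $F$ in the interior. The argument the paper has in mind is local: the quadratic vanishing $F(s)\sim\tfrac12F''(1)(1-s)^2$ yields $T(a)\sim \tfrac{1}{\sqrt{F''(1)}}\log\tfrac{1}{1-a}$ as $a\to1^-$ (cf.\ Proposition~\ref{prop:alfa,beta}), which is increasing and divergent. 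Combined with continuity of $T$ on $(0,1)$ and boundedness on compact subsets, this gives, for all $L$ beyond some $L_0$, a unique $a(L)$ near $1$ with $T(a(L))=L/2$, hence strict monotonicity of $M_+(L)=a(L)$ and $M_+(+\infty)=1$. If you replace your global monotonicity claim by this local one near the wall, and delete the incorrect $a\to0^+$ asymptotics, the remaining ingredients of your write-up (symmetry reduction to the midpoint, convergence of the integral for each $a$, the logarithmic blow-up at $a\to1^-$) are fine and reproduce the paper's proof.
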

This lemma is consequence of the fact that $\pm1$ are absolute minima of $F$ and so, 
there exist periodic solutions of \eqref{eq:varfi} oscillating around $0$. 
Indeed, the existence of such solutions is guaranteed if there exist $M_\pm\in(0,1)$ such that
$F(M_+)=F(-M_-)$, $F'(M_\pm)\neq0$ and $F(s)>F(M_+)$ for all $s\in(-M_-,M_+)$.
This condition is certainly satisfied if $M_\pm$ are close to $+1$.
Let us consider the positive case $\varphi(\cdot,L,+1)$ and $M_+(L)=\varphi(0,L,+1)$.
By integrating \eqref{eq:varfi} in $(-\tfrac12L,0)$ and using the boundary conditions in \eqref{eq:z}, 
we obtain
\begin{equation}\label{eq:L-per}
	L=\sqrt2\int_0^{M_+}\frac{ds}{\sqrt{F(s)-F(M_+)}}.
\end{equation}
The integral in \eqref{eq:L-per} tends to infinity as $M_+\to1^-$ 
and it is an increasing function of $M_+$ for $M_+$ close to $+1$. 
Hence, for $L$ sufficiently large, there exists a unique $M_+$ such that \eqref{eq:L-per} is satisfied
and so the function $\varphi(\cdot,L,+1)$ is well-defined. 
The negative case $\varphi(\cdot,L,-1)$ and $M_-(L)=-\varphi(0,L,-1)$ is similar.

Now, given $\ell>0$, let us define $\phi(x,\ell,\pm1):=\varphi\bigl(\frac x\varepsilon,\frac \ell\varepsilon,\pm1\bigr)$.
By definition, it follows that $\phi(\cdot,\ell,+1)$ is the solution to
\begin{equation}\label{fi(x,l)}
	\mathcal{L}(\phi):=-\varepsilon^2\phi_{xx}+f(\phi)=0, \qquad \quad
	\phi\bigl(-\tfrac12\ell\bigr)=\phi\bigl(\tfrac12\ell\bigr)=0,
\end{equation}
with $\phi>0$ in $(-\tfrac12\ell,\tfrac12\ell)$,
and $\phi(\cdot,\ell,-1)$ is the solution to \eqref{fi(x,l)} with $\phi<0$ in $(-\tfrac12\ell,\tfrac12\ell)$.
Moreover, the functions $\phi(\cdot,\ell,\pm1)$ are well-defined if $\ell>\varepsilon L_0$, they depend on $\varepsilon$ 
and $\ell$ only through the ratio $\varepsilon/\ell$. 
Finally,  
\begin{equation*}
	\max_x|\phi(\cdot,\ell,\pm1)|=M_\pm(\ell/\varepsilon)
	\qquad \quad \textrm{and} \qquad \quad
	\max_x|\phi_x(\cdot,\ell,\pm1)|\leq C\varepsilon^{-1},
\end{equation*}
where $C>0$ is a constant depending only on $F$. 
In particular, $M_\pm$ tends to $+1$ as $\varepsilon/\ell\to 0$ (more details in Proposition \ref{prop:alfa,beta}).

For $\bm h\in\Omega_\rho$ with $\rho<1/L_0$, we define the function $u^{\bm h}$ with $N$ transition points at $h_1,\dots,h_N$
by matching together steady states to \eqref{hyp-al-ca} with layer distance equal to $\ell$, using smooth cut-off functions.
Given $\chi:\mathbb{R}\rightarrow[0,1]$ a $C^\infty$ function with $\chi(x)=0$ for $x\leq-1$ and $\chi(x)=1$ for $x\geq1$,
set 
\begin{equation*}
	\chi^j(x):=\chi\left(\frac{x-h_j}\varepsilon\right) \qquad\textrm{and}\qquad
	\phi^j(x):=\phi\left(x-h_{j-1/2},h_j-h_{j-1},(-1)^j\right).
\end{equation*}
Then  the function $u^{\bm h}$ is given by the convex combination
\begin{equation}\label{u^h(x)}
	u^{\bm h}:=\left(1-\chi^j\right)\phi^j+\chi^j\phi^{j+1} \qquad \textrm{in}\quad I_j:=[h_{j-1/2},h_{j+1/2}],
\end{equation}
and the base manifold for the equation \eqref{AllenCahn} is defined as
\begin{equation*}
	\mathcal{M}:=\{u^{\bm h} :\bm h\in\Omega_\rho\}.
\end{equation*} 
If $\rho>0$ is sufficiently small and $\bm h\in\Omega_\rho$, then $u^{\bm h}(x)\approx\Phi\left((x-h_j)(-1)^{j-1}\right)$
for $x$ near $h_j$ and $u^{\bm h}(x)\approx\pm1$ away from $h_j$ for $j=1,\dots,N$. 
Therefore, states $u^{\bm h}$ on the base manifold are well approximated near transition layers by $U^{\bm h}$.

By definition, $u^{\bm h}$  is a smooth function of $x$ and $\bm h$ and enjoys the properties
\begin{equation*}
	\begin{aligned}
	u^{\bm h}(0)&=\phi(0,2h_1,-1)<0,
			&\qquad 	u^{\bm h}(h_{j+1/2})&=\phi\left(0,h_{j+1}-h_j,(-1)^{j+1}\right)\\
	u^{\bm h}(h_j)&=0,
			&\qquad \mathcal{L}(u^{\bm h}(x))&=0\quad \textrm{for }|x-h_j|\geq\varepsilon,
	\end{aligned}
\end{equation*}
for any $j=1,\dots,N$.
In what follows, we use the notation
\begin{equation*}
	u^{\bm h}_j:=\partial_{h_j} u^{\bm h}, \qquad \quad
	\nabla_{\bm h} u^{\bm h}:=\bigl(u^{\bm h}_1,\dots,u^{\bm h}_N\bigr),
\end{equation*}
and we denote the tangent space to $\mathcal{M}$ at $u^{\bm h}$ by $T\mathcal{M}(u^{\bm h})=\mbox{span}\{u^{\bm h}_j : j=1,\dots,N\}$. 
At this point, the natural idea would be to construct a tubular neighborhood of $\mathcal{M}$,
with coordinates $(\bm h,w)$ where $w$ is orthogonal to $T\mathcal{M}(u^{\bm h})$.
Since $\mathcal{M}$ is not invariant, there is higher flexibility in the construction of its neighborhood
and tubular co-ordinates near $\mathcal{M}$ can be defined using approximate tangent vectors to $\mathcal{M}$. 
For $j=1,\dots,N$, introduce the cutoff function $\gamma^j$ given by
\begin{equation*}
	\gamma^j(x):=\chi\left(\frac{x-h_{j-1/2}-\varepsilon}\varepsilon\right)\left[1-\chi\left(\frac{x-h_{j+1/2}+\varepsilon}\varepsilon\right)\right].
\end{equation*}   
Then, the {\it approximate tangent vectors} $k^{\bm h}_j$ are defined by
\begin{equation*}
	k^{\bm h}_j(x):=-\gamma^j(x)u^{\bm h}_x(x).
\end{equation*}
By construction, $k^{\bm h}_j$ are smooth functions of $x$ and $\bm h$ and are such that
\begin{equation*}
	\begin{aligned}
	k^{\bm h}_j(x)&=0				&\quad \textrm{for}\quad &x\notin[h_{j-1/2},h_{j+1/2}],\\
	k^{\bm h}_j(x)&=-u^{\bm h}_x(x)	&\quad \textrm{for}\quad &x\in[h_{j-1/2}+2\varepsilon,h_{j+1/2}-2\varepsilon]. 
	\end{aligned}
\end{equation*}
As above, we use the notation  
\begin{equation*}
	k^{\bm h}_{ji}:=\partial_{h_i} k^{\bm h}_j, \qquad \quad
	\nabla_{{}_{\bm{h}}} k^{\bm h}_j:=\bigl(k^{\bm h}_{j1},\dots,k^{\bm h}_{jN}\bigr).
\end{equation*}
 The definition of the approximate tangent vectors is motivated by the relations
\begin{equation*}
	\begin{aligned}
		u_j^{\bm h} = \partial_{h_j} u^{\bm h} \approx \partial_{h_j}\Phi\left((x-h_j)(-1)^{j-1}\right )
			= (-1)^{j} \Phi'\left((x-h_j)(-1)^{j-1}\right )\approx -u^{\bm h}_x
	\end{aligned}
\end{equation*}	
for $x\in[h_{j-1/2},h_{j+1/2}]$.	
In addition, the multiplication by the cutoff term $\gamma^j$ is reminiscent of the fact that the tangent space
of $\mathcal{M}^{\textrm{\tiny FH}}$ is spanned by $U_j^{\bm h}$ that have disjoint supports.

The following estimates will be useful in the sequel.
\vspace{0.2cm}
\begin{prop}[Carr--Pego \cite{Carr-Pego}] \label{estimates-u^h_j}
Let $f=F'$ with $F$ satisfying \eqref{hypf}.
Given $N\in\mathbb{N}$ and $\delta\in(0,1/N)$, there exist $\varepsilon_0, C, A_0>0$, 
and  a function $\omega=\omega(s)$ with $\omega\to 0$ as $s\rightarrow 0^+$
such that if $\varepsilon$ and $\rho$ are chosen so that \eqref{triangle} holds and $\bm{h}\in\Omega_\rho$, then
\begin{equation*}
	\|u^{\bm h}_j\|_{{}_{L^\infty}}+\varepsilon^{1/2}\|k^{\bm h}_{jj}\|+\|k^{\bm h}_{jj}\|_{{}_{L^1}}\leq C\varepsilon^{-1},
\end{equation*}
\begin{equation*}	
	\begin{aligned}
	A_0-\omega(\rho)&\leq \varepsilon^{1/2}\|u_j^{\bm h}\|\leq A_0+\omega(\rho),\\
	A_0-\omega(\rho)&\leq \varepsilon^{1/2}\|k_j^{\bm h}\|\leq A_0+\omega(\rho), \\
	\bigl\{A_0-\omega(\rho)\bigr\}^2&\leq \varepsilon\langle u_j^{\bm h},k^{\bm h}_j\rangle \leq\bigl\{A_0+\omega(\rho)\bigr\}^2, \\
	\end{aligned}
\end{equation*}
for $\bm h\in\Omega_\rho$ and $j=1,\dots,N$. Moreover, if $j\neq i$, we have
\begin{equation*}
	 |\langle u^{\bm h}_j,k^{\bm h}_i\rangle|+\varepsilon^{1/2}\|k^{\bm h}_{ij}\|
	 	+\|k^{\bm h}_{ij}\|_{{}_{L^1}}\leq \omega(\rho)\varepsilon^{-1}.
\end{equation*}
\end{prop}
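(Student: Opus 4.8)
The goal is to prove Proposition~\ref{estimates-u^h_j}, which collects the quantitative estimates on the functions $u^{\bm h}_j$, the approximate tangent vectors $k^{\bm h}_j$ and their derivatives. The plan is to reduce every quantity to an integral involving $\phi(\cdot,\ell,\pm1)$ and its $x$- and $\ell$-derivatives, and then exploit the scaling relation $\phi(x,\ell,\pm1)=\varphi(x/\varepsilon,\ell/\varepsilon,\pm1)$ together with the fact that, under~\eqref{triangle}, $\ell/\varepsilon>L_0$ is bounded below, so all profiles are uniformly close to the heteroclinic $\Phi$ and the "defect'' from being exactly $\Phi$ is measured by a modulus $\omega(\rho)\to0$.

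First I would differentiate the convex-combination formula \eqref{u^h(x)} with respect to $h_j$. Since $\chi^j$ and $\phi^j$ depend on $\bm h$ only through $h_j$ (and the neighboring $h_{j\pm1}$ through $h_{j\mp1/2}$), the derivative $u^{\bm h}_j$ is supported, up to exponentially small tails, in $[h_{j-1/2},h_{j+1/2}]$, and there it is within $O(\omega(\rho))$ of $-\Phi'((x-h_j)(-1)^{j-1})$; this is exactly the heuristic displayed just before the Proposition. From the ODE for $\phi$ one gets the first-integral identity \eqref{eq:varfi}, hence pointwise bounds $\|u^{\bm h}_j\|_{L^\infty}\le C\varepsilon^{-1}$ and $\|\phi^j_x\|_{L^\infty}\le C\varepsilon^{-1}$. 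The $L^2$-estimate $\varepsilon^{1/2}\|u^{\bm h}_j\|\to A_0$ follows by writing $\|u^{\bm h}_j\|^2=\int (u^{\bm h}_j)^2\,dx$, changing variables $x\mapsto (x-h_j)/\varepsilon$, and comparing with $\int_{\R}(\Phi')^2\,d\xi=:A_0^2$; the error is controlled by $|M_\pm(\ell/\varepsilon)-1|$ and the cutoff regions of width $O(\varepsilon)$, both $\le\omega(\rho)$. The same change of variables handles $\varepsilon^{1/2}\|k^{\bm h}_j\|$ (using $k^{\bm h}_j=-\gamma^j u^{\bm h}_x$ and that $\gamma^j\equiv1$ on the bulk), and $\varepsilon\langle u^{\bm h}_j,k^{\bm h}_j\rangle$, since on the bulk $u^{\bm h}_j\approx -u^{\bm h}_x\approx k^{\bm h}_j$, so the inner product is $\approx\|u^{\bm h}_j\|^2$.

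Next I would treat the second derivatives $k^{\bm h}_{ji}=\partial_{h_i}k^{\bm h}_j$. Differentiating $k^{\bm h}_j=-\gamma^j u^{\bm h}_x$ produces two kinds of terms: one where the derivative falls on $\gamma^j$ (supported in the cutoff strips of width $O(\varepsilon)$ near $h_{j\pm1/2}$, where $|u^{\bm h}_x|\le C\varepsilon^{-1}$, giving an $L^1$-contribution $O(1)$ and, after the $\varepsilon^{-1/2}$ Jacobian, an $L^2$-contribution $O(\varepsilon^{-1/2})$), and one where it falls on $u^{\bm h}_x$, i.e.\ $\partial_{h_i}\partial_x u^{\bm h}$. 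For $i=j$ this last term is $O(\varepsilon^{-2})$ pointwise but localized on an interval of length $O(\varepsilon)$ around $h_j$, so its $L^1$-norm is $O(\varepsilon^{-1})$ and its $L^2$-norm is $O(\varepsilon^{-3/2})$, which after multiplying by $\varepsilon^{1/2}$ gives the claimed $O(\varepsilon^{-1})$. For $i\ne j$ the only dependence of $k^{\bm h}_j$ on $h_i$ comes through the endpoints $h_{j\pm1/2}$ (hence only when $i=j\pm1$) and through $\phi^j,\phi^{j+1}$ via the layer width; all these contributions are supported in regions where the profile is exponentially close to $\pm1$, so $\|k^{\bm h}_{ij}\|_{L^1}$ and $\varepsilon^{1/2}\|k^{\bm h}_{ij}\|$ are $\le\omega(\rho)\varepsilon^{-1}$ — in fact exponentially small. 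The cross term $|\langle u^{\bm h}_j,k^{\bm h}_i\rangle|$ is small for the same reason: $u^{\bm h}_j$ is concentrated near $h_j$ and $k^{\bm h}_i$ near $h_i$, with overlap only in exponentially-flat regions.

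The main obstacle is bookkeeping the interaction (off-diagonal) terms with the correct dependence on $\rho$: one must show that wherever two localized objects overlap, or wherever a cutoff derivative is active, the integrand is controlled by the deviation of $\phi$ from $\pm1$ near the ends of its defining interval, which by \eqref{eq:L-per} and the monotone convergence $M_\pm(\ell/\varepsilon)\to1$ is bounded by a single modulus $\omega(\rho)$ (uniformly in $\bm h\in\Omega_\rho$ and in $j$). This is precisely the content established by Carr and Pego in \cite{Carr-Pego}, so — since the profiles $\phi$, the manifold $\mathcal M$ and the vectors $k^{\bm h}_j$ here coincide with theirs (the hyperbolic structure has not yet entered) — the statement is inherited verbatim, and it suffices to cite \cite{Carr-Pego} after indicating the scaling argument above; no new ingredient beyond \eqref{eq:varfi}, \eqref{eq:L-per} and the change of variables $x\mapsto x/\varepsilon$ is needed.
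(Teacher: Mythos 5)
Your proposal is correct in spirit and follows the same route the paper implicitly takes: the paper states this result with a citation to Carr--Pego and a one-line heuristic (replace $u^{\bm h}_j$ and $k^{\bm h}_j$ by $-\Phi'(x-h_j)$ and rescale), and your sketch is exactly that heuristic fleshed out via the change of variables $x\mapsto x/\varepsilon$, with the off-diagonal and second-derivative terms pushed into the exponentially small tails, followed by deferring to \cite{Carr-Pego} for the detailed bookkeeping. One small inaccuracy worth flagging: near $h_{j\pm1/2}$ one actually has $|u^{\bm h}_x|\le C\varepsilon^{-1}\beta^{j\pm1/2}$ (exponentially small), not merely $O(\varepsilon^{-1})$, and with the pessimistic $O(\varepsilon^{-1})$ bound the cutoff-derivative term $\gamma^j_j u^{\bm h}_x$ (note $\gamma^j_j=O(\varepsilon^{-1})$) would give an $L^1$-contribution $O(\varepsilon^{-1})$, not the $O(1)$ you wrote; this is harmless for the stated conclusion but the intermediate accounting should be fixed.
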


\noindent
Heuristically, the exponent of $\varepsilon$ can be obtained by replacing
$u^{\bm h}_j$ and $k^{\bm h}_j$ with $-\Phi'(x-h_j)$. 

A function $u$ near $\mathcal{M}$ may be written in terms of coordinates $(\bm h,w)$ as $u=u^{\bm h}+w$,
with $w$ satisfying the orthogonality condition \eqref{ortogonale}. 
To state this result, let us set
\begin{equation*}
	\begin{aligned}
	\mathcal{B}_{\rho,\sigma}&:=\Bigl\{u\in L^\infty\, :\; \inf_{\bm h\in\Omega_\rho}\|u-u^{\bm h}\|_{{}_{L^\infty}}<\sigma\Bigr\},\\
	\hat{\mathcal{S}}_{\rho,\sigma}&:=\left\{(\bm h,w)\in\Omega_\rho\times L^\infty\,:\; \|w\|_{{}_{L^\infty}}<\sigma,\,
	\langle w,k^{\bm h}_j\rangle =0 \quad\mbox{ for }\; j=1,\dots,N\right\}.
	\end{aligned}
\end{equation*}
\vspace{0.2cm}
\begin{prop}[Carr--Pego \cite{Carr-Pego}] \label{tub-co}
There exist $\rho_1, \rho_2, \sigma, C>0$ with $\rho_1<\rho_2$
and a smooth function $\mathcal{H}\,:\,\mathcal{B}_{\rho_1,\sigma}\rightarrow\Omega_{\rho_2}$ 
such that, whenever $\bm h=\mathcal{H}(u)$, we have 
\begin{equation*}
	\langle u-u^{\bm h}, k^{\bm h}_j\rangle=0 \qquad \mbox{ for } \quad j=1,\dots,N,
\end{equation*}
and
\begin{equation*}
	\|u-u^{\bm h}\|_{{}_{L^\infty}}\leq C\inf\left\{\|u-u^{\bm l}\|_{{}_{L^\infty}}\,:\;\bm l\in\Omega_{\rho_1}\right\}<C\sigma.
\end{equation*}
Moreover, defining $\mathcal{U}\,:\,\hat{\mathcal{S}}_{\rho_1,\sigma}\to \mathcal{B}_{\rho_1,\sigma}$ by setting
\begin{equation*}
	\mathcal{U}(\bm h,w):=u^{\bm h}+w\quad\textrm{and}\quad
	\mathcal{S}_{\rho_1,\sigma}:=\mathcal{U}(\hat{\mathcal{S}}_{\rho_1,\sigma}),
\end{equation*}
the function $\mathcal{U}$ is injective, $(\mathcal{H}\circ\mathcal{U})(\bm h,w)=\bm h$
for all $(\bm h,w)\in\hat{\mathcal{S}}_{\rho_1,\sigma}$
and the set $\mathcal{S}_{\rho_1,\sigma}$ is open in $L^\infty(0,1)$. 
\end{prop}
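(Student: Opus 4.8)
The plan is to obtain $\mathcal{H}$ from the implicit function theorem applied to the map
\[
  G\colon\Omega_\rho\times L^\infty(0,1)\longrightarrow\mathbb{R}^N,\qquad
  G_j(\bm h,u):=\langle u-u^{\bm h},k^{\bm h}_j\rangle ,
\]
since, for fixed $u$, the equation $G(\bm h,u)=0$ is exactly the orthogonality condition \eqref{ortogonale}. The map $G$ is smooth: it is affine in $u$, and smooth in $\bm h$ because $u^{\bm h}$ and $k^{\bm h}_j$ are smooth in $\bm h$; moreover $G(\bm h,u^{\bm h})=0$ on the manifold. Differentiating,
\[
  \partial_{h_i}G_j(\bm h,u)=\langle u-u^{\bm h},k^{\bm h}_{ji}\rangle-\langle u^{\bm h}_i,k^{\bm h}_j\rangle ,
\]
so along $\mathcal{M}$ the Jacobian in $\bm h$ is $\partial_{\bm h}G(\bm h,u^{\bm h})=-\bigl[\langle u^{\bm h}_i,k^{\bm h}_j\rangle\bigr]_{i,j=1}^N$. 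By Proposition \ref{estimates-u^h_j} the rescaled matrix $\bigl[\varepsilon\langle u^{\bm h}_i,k^{\bm h}_j\rangle\bigr]$ has diagonal entries in $\bigl[\{A_0-\omega(\rho)\}^2,\{A_0+\omega(\rho)\}^2\bigr]$ and off-diagonal entries of size at most $\omega(\rho)$; choosing $\varepsilon_0$ small forces $\omega(\rho)$ small under \eqref{triangle}, making the matrix strictly diagonally dominant and hence invertible, with inverse bounded uniformly in $\bm h\in\Omega_\rho$. Thus $\partial_{\bm h}G$ is invertible along $\mathcal{M}$ with $\|(\partial_{\bm h}G)^{-1}\|=O(\varepsilon)$, uniformly in $\bm h$.

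I would then run the contraction-mapping proof of the implicit function theorem keeping every constant uniform in the base point $\bar{\bm h}\in\Omega_\rho$: for $u$ in an $L^\infty$-ball about $u^{\bar{\bm h}}$ one solves $G(\bm h,u)=0$ for $\bm h$ near $\bar{\bm h}$, and the radius of this ball can be taken to be a fixed $\sigma>0$ independent of $\bar{\bm h}$ and of $\varepsilon$, because the second $\bm h$-derivatives of $G$ and the $L^\infty$-operator norms of $\partial_uG_j=\langle\,\cdot\,,k^{\bm h}_j\rangle$ are controlled uniformly on $\Omega_\rho$ (here one uses $\|k^{\bm h}_j\|_{L^1}=O(1)$, since $k^{\bm h}_j=-\gamma^j u^{\bm h}_x$ with $u^{\bm h}$ a single monotone-type transition on $[h_{j-1/2},h_{j+1/2}]$, together with the $\varepsilon^{-1}$-type bounds of Proposition \ref{estimates-u^h_j}), so that the $\varepsilon$-weights cancel against $\|(\partial_{\bm h}G)^{-1}\|=O(\varepsilon)$. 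Uniqueness in the implicit function theorem makes the local solutions agree on overlaps, so they patch into a single smooth map $\mathcal{H}$ defined on $\mathcal{B}_{\rho_1,\sigma}$ for a suitable $\rho_1<\rho$, with range in some $\Omega_{\rho_2}$, $\rho_2>\rho_1$, satisfying $\langle u-u^{\mathcal{H}(u)},k^{\mathcal{H}(u)}_j\rangle=0$; the same estimates yield that $\mathcal{H}$ is Lipschitz from $(L^\infty,\|\cdot\|_{L^\infty})$ to Euclidean $\mathbb{R}^N$ with constant $O(\varepsilon)$. This uniform-in-$\varepsilon$ bookkeeping — making $\rho_1,\rho_2,\sigma$ genuinely independent of $\varepsilon$ under \eqref{triangle} — is the main obstacle, and it works precisely because of the cancellation between the $\varepsilon^{-1}$ growth of $\nabla_{\bm h}u^{\bm h}$ and $k^{\bm h}_j$ recorded in Proposition \ref{estimates-u^h_j} and the order-$\varepsilon$ smallness of $(\partial_{\bm h}G)^{-1}$.

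For the near-optimality bound, fix $u\in\mathcal{B}_{\rho_1,\sigma}$, put $d:=\inf\{\|u-u^{\bm l}\|_{L^\infty}:\bm l\in\Omega_{\rho_1}\}$ and pick $\bm l\in\Omega_{\rho_1}$ with $\|u-u^{\bm l}\|_{L^\infty}$ close to $d$. Since $d<\sigma$ and $\mathcal{H}(u^{\bm l})=\bm l$, the Lipschitz bound gives $\bm h:=\mathcal{H}(u)$ with $|\bm h-\bm l|\le C\varepsilon\, d$; combining with $\|\nabla_{\bm h}u^{\bm h}\|_{L^\infty}\le C\varepsilon^{-1}$ from Proposition \ref{estimates-u^h_j} and the triangle inequality along the segment from $\bm l$ to $\bm h$ in the convex set $\Omega_{\rho_2}$,
\[
  \|u-u^{\bm h}\|_{L^\infty}\le\|u-u^{\bm l}\|_{L^\infty}+\|u^{\bm l}-u^{\bm h}\|_{L^\infty}
  \le d+C\varepsilon^{-1}|\bm h-\bm l|\le Cd<C\sigma ,
\]
so the powers of $\varepsilon$ again cancel.

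It remains to verify the final assertions. Given $(\bm h,w)\in\hat{\mathcal{S}}_{\rho_1,\sigma}$ and $u:=u^{\bm h}+w=\mathcal{U}(\bm h,w)$, one has $u\in\mathcal{B}_{\rho_1,\sigma}$ (take $\bm l=\bm h$) and $\langle u-u^{\bm h},k^{\bm h}_j\rangle=\langle w,k^{\bm h}_j\rangle=0$; by the uniqueness built into $\mathcal{H}$ this forces $\mathcal{H}(u)=\bm h$, i.e. $(\mathcal{H}\circ\mathcal{U})(\bm h,w)=\bm h$. Injectivity of $\mathcal{U}$ is then immediate: if $\mathcal{U}(\bm h,w)=\mathcal{U}(\bm h',w')$, applying $\mathcal{H}$ gives $\bm h=\bm h'$ and hence $w=u-u^{\bm h}=u-u^{\bm h'}=w'$. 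Finally,
\[
  \mathcal{S}_{\rho_1,\sigma}=\bigl\{u\in\mathcal{B}_{\rho_1,\sigma}\,:\,\mathcal{H}(u)\in\Omega_{\rho_1}\ \text{and}\ \|u-u^{\mathcal{H}(u)}\|_{L^\infty}<\sigma\bigr\},
\]
which is open in $L^\infty(0,1)$, being a finite intersection of preimages of open sets ($\mathcal{B}_{\rho_1,\sigma}$ is itself open as a sublevel set of a Lipschitz function, $\Omega_{\rho_1}$ is open, and $(-\infty,\sigma)$ is open) under the continuous maps $\mathcal{H}$ and $u\mapsto\|u-u^{\mathcal{H}(u)}\|_{L^\infty}$.
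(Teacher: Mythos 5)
Your proof is correct and follows the same implicit-function-theorem route Carr and Pego use to construct the tubular coordinate map $\mathcal{H}$ (the paper quotes this proposition without proof), including the decisive $\varepsilon$-cancellation between the $O(\varepsilon)$ norm of $(\partial_{\bm h}G)^{-1}$ and the $O(\varepsilon^{-1})$ size of $\nabla_{\bm h}u^{\bm h}$ and $k^{\bm h}_j$ that makes $\sigma$ and the Lipschitz constant uniform in $\varepsilon$. The one step you leave implicit is the global gluing of the local IFT branches: one must observe that $\|u^{\bm l_1}-u^{\bm l_2}\|_{L^\infty}<2\sigma$ with $\sigma$ small forces $|\bm l_1-\bm l_2|_\infty=O(\varepsilon)$ (because the layers have width $O(\varepsilon)$ and a layer shift of order $\gg\varepsilon$ produces an $O(1)$ sup-norm discrepancy), so that overlapping branches land in a single local-uniqueness ball — a standard but necessary remark.
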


In the last statement, constants $\rho_1, \rho_2, \sigma, C$ can be chosen independent on $\varepsilon$.

\subsection{Energy functional $\mathcal{E}^h$ and barrier function $\Psi$}
As stated in the Introduction, the neighborhood $\mathcal{Z}_{{}_{\Gamma,\rho}}$ of the extended base
manifold $\mathcal{M}_{{}_{0}}$ is defined in terms of the {\it energy functional} $\mathcal{E}^{\bm{h}}$
and the {\it barrier function} $\Psi$, see \eqref{energy} and \eqref{barrier}.
The positivity of the first term in $\mathcal{E}^{\bm{h}}$ holds for $\rho$ small and $w$ satisfying the
orthogonality condition \eqref{ortogonale}.
\vspace{0.2cm}
\begin{thm}[Carr--Pego \cite{Carr-Pego}] \label{L^hw-theo}
Let $f=F'$, with $F$ satisfying \eqref{hypf}.
Given $N\in\mathbb{N}$ and $\delta\in(0,1/N)$, there exist $\varepsilon_0, \Lambda>0$, 
such that if $\varepsilon$ and $\rho$ are chosen so that \eqref{triangle} holds and $\bm{h}\in\Omega_\rho$, then

\begin{equation*}
	\Lambda\int_0^1\bigl\{\varepsilon^2w^2_x+w^2\bigr\}dx
		\leq\int_0^1\bigl\{\varepsilon^2w_x^2+f'(u^{\bm h})w^2\bigr\}dx,
\end{equation*}
for any $w\in H^1(0,1)$ satisfying $\langle w,k^{\bm h}_j\rangle=0$ for $j=1,\dots,N$.
\end{thm}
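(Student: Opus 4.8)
The statement is a coercivity (spectral gap) estimate for the bilinear form $Q^{\bm h}[w]:=\int_0^1\{\varepsilon^2 w_x^2+f'(u^{\bm h})w^2\}\,dx$ restricted to the codimension-$N$ subspace $\{w\in H^1:\langle w,k^{\bm h}_j\rangle=0\}$. The plan is to localize near the $N$ transition layers and reduce to a one-layer model problem, for which the relevant operator is $-\varepsilon^2\partial_x^2+f'(\Phi(\cdot-h_j))$ whose kernel is (essentially) spanned by $\Phi'(\cdot-h_j)$, with a strictly positive first eigenvalue on the orthogonal complement. First I would note that outside the $\varepsilon$-neighborhoods of the layers $h_j$, one has $u^{\bm h}\approx\pm1$, hence $f'(u^{\bm h})\geq \tfrac12\min\{f'(-1),f'(1)\}>0$ by \eqref{hypf} and continuity, so on that region $Q^{\bm h}$ already dominates $\int\{\varepsilon^2 w_x^2+w^2\}$ trivially. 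The work is therefore concentrated in the windows $|x-h_j|\lesssim\varepsilon$, where $f'(u^{\bm h})$ can be negative.

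The key steps, in order, are as follows. (i) Introduce a smooth partition of unity $\{\theta_j\}$ subordinate to the intervals $I_j=[h_{j-1/2},h_{j+1/2}]$ (with the $\gamma^j$ already available from the construction) and write $w=\sum_j w_j$ with $w_j$ supported in a slightly enlarged $I_j$; the cross terms and the commutator terms $\varepsilon^2|\theta_j'|^2$ are $O(1)$ coefficients against $w^2$ and, crucially, the gradient-localization error is controlled because $|\theta_j'|\lesssim 1/\ell^{\bm h}\lesssim \rho/\varepsilon$, so $\varepsilon^2|\theta_j'|^2\lesssim\rho^2$ is small. (ii) On each $I_j$ reduce to the rescaled variable $y=(x-h_j)/\varepsilon$, where the operator becomes $-\partial_y^2+f'(\varphi)$ on a long interval of length $\sim\ell/\varepsilon$ with Neumann-type conditions; by the standard one-dimensional Allen--Cahn spectral analysis (as in Carr--Pego \cite{Carr-Pego}), this operator has a simple lowest eigenvalue that is exponentially small and whose eigenfunction is close to $\varphi_y\approx\Phi'$, while the second eigenvalue is bounded below by a positive constant independent of $\varepsilon$. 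Hence $Q^{\bm h}$ restricted to $I_j$ controls $\int_{I_j}\{\varepsilon^2 w_x^2+w^2\}$ \emph{provided} $w_j$ is orthogonal to the ground state. (iii) Upgrade the orthogonality: the hypothesis gives $\langle w,k^{\bm h}_j\rangle=0$, and since $k^{\bm h}_j=-\gamma^j u^{\bm h}_x\approx-\Phi'(\cdot-h_j)$ is exponentially close to the true ground state of the localized operator (Proposition \ref{estimates-u^h_j} quantifies this), the deficiency in orthogonality to the exact eigenfunction is $O(e^{-A\ell^{\bm h}/\varepsilon})$ times $\|w\|$, which is absorbed. Assembling the local estimates and using that the small error constants can be made $<\Lambda$ by taking $\varepsilon_0$ and $\rho$ small (via \eqref{triangle}) yields the claim with some $\Lambda>0$.

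The main obstacle is step (iii) combined with the mismatch between the imposed orthogonality direction $k^{\bm h}_j$ and the genuine near-kernel of the localized operator: one must show that testing against $k^{\bm h}_j$ (a cut-off derivative of $u^{\bm h}$, not an eigenfunction) still removes essentially all of the dangerous low mode. This requires the quantitative closeness estimates of Proposition \ref{estimates-u^h_j}, together with a careful bookkeeping of how the exponentially small eigenvalue interacts with the exponentially small orthogonality defect — the product must still leave a positive gap. A secondary technical point is handling the interaction between adjacent layers and the boundary (via the ghost points $h_0=-h_1$, $h_{N+1}=2-h_N$): the Neumann condition $w_x(0)=w_x(1)=0$ is consistent with the even reflection built into $\varphi(\cdot,2h_1,-1)$, so the endpoint windows are treated exactly like interior ones. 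Since all of this is precisely the content of the Carr--Pego estimates recalled above, the proof is ultimately a matter of invoking \cite{Carr-Pego} directly; indeed, as the statement is attributed to Carr--Pego, the cleanest route is simply to cite their Proposition/Theorem and remark that nothing in it depends on the hyperbolic modification, since it concerns only the elliptic operator $\mathcal{L}$ linearized at $u^{\bm h}$.
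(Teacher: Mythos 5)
Your proposal correctly identifies that this is a statement about the elliptic linearization $L^{\bm h}$ alone, unaffected by the hyperbolic modification, and that the cleanest route is to import the result verbatim from Carr--Pego \cite{Carr-Pego} --- which is exactly what the paper does, stating the theorem with attribution and providing no proof of its own. The localization/spectral-gap sketch you supply as scaffolding is consistent with the structure of the cited Carr--Pego argument, so the only remark worth adding is that your concluding observation (cite Carr--Pego directly, since only the operator $\mathcal{L}$ linearized at $u^{\bm h}$ is involved) is precisely the paper's stance.
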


Given ${\bm h}\in\Omega_\rho$, we consider the operator $L^{\bm h}$, linearization of $\mathcal{L}(u)$ about $u^{\bm h}$, i.e.
\begin{equation}\label{L^h}
	L^{\bm h}w:=-\varepsilon^2w_{xx}+f'(u^{\bm h})w.
\end{equation} 
If $w\in H^2$ and $w_x(0)=w_x(1)=0$, integrating by parts, we infer
\begin{equation*}
	\langle w,L^{\bm h}w\rangle=\int_0^1\bigl\{\varepsilon^2w^2_x+f'(u^{\bm h})w^2\bigr\}dx.
\end{equation*}
Hence, in this case, the energy functional can be written as
\begin{equation}\label{E(w,v)}
	\mathcal{E}^{\bm{h}}[w,v]=\tfrac12\langle w, L^{\bm h} w\rangle+\tfrac12\tau\|v\|^2+\varepsilon\tau\langle w,v\rangle,
\end{equation}
and from Theorem \ref{L^hw-theo} it follows that
\begin{equation}\label{eq:nuoval2}
	\Lambda\|w\|^2 \leq \langle w, L^{\bm h}w\rangle.
\end{equation}
Moreover, let $x_2\in[0,1]$ be such that $|w(x_2)|=\|w\|_{{}_{L^\infty}}$ 
and let $x_1\in[0,1]$ be such that $|w(x_1)|=\min\{|w(x)|:x\in[0,1]\}$.
Assume without loss of generality $x_2>x_1$ (otherwise replace $w(x)$ by $w(1-x)$).
We have
\begin{equation*}
	\varepsilon w(x_2)^2-\varepsilon w(x_1)^2= \int_{x_1}^{x_2} 2\varepsilon w(x)\,w_x(x)\,dx
	\leq \int_0^1\bigl\{\varepsilon^2w^2_x+w^2\bigr\}dx,
\end{equation*}
and so,
\begin{equation*}
	\varepsilon \|w\|_{{}_{L^\infty}}^2\leq \varepsilon w(x_1)^2+\int_0^1\bigl\{\varepsilon^2w^2_x+w^2\bigr\}dx\leq (1+\varepsilon)\int_0^1\bigl\{\varepsilon^2w^2_x+w^2\bigr\}.
\end{equation*}
By applying Theorem \ref{L^hw-theo} and taking into account the latter bound with $\varepsilon\leq1$,
we deduce also the estimate
\begin{equation}\label{intelligente}
	\tfrac12\Lambda\varepsilon\|w\|^2_{{}_{L^\infty}}\leq\int_0^1\bigl\{\varepsilon^2w_x^2+f'(u^{\bm h})w^2\bigr\}dx
		=\langle w,L^{\bm h}w\rangle.
\end{equation}
In order to provide representations of the barrier $\Psi$, defined in \eqref{barrier},
we introduce some auxiliary functions.
Since $\phi(0,\ell,\pm1)$ depends only on the ratio $r=\varepsilon/\ell$, we can define
\begin{equation*}
	\alpha_\pm(r):=F(\phi(0,\ell,\pm1)), \qquad \quad \beta_\pm(r):=1\mp\phi(0,\ell,\pm1).
\end{equation*}
By definition, $\phi(0,\ell,\pm1)$ is close to $+1$ or $-1$ and so, $\alpha_\pm(r), \beta_\pm(r)$ are close to $0$. 
The next result characterizes the leading terms in $\alpha_\pm$ and $\beta_\pm$ as $r\to 0$.
\vspace{0.2cm}
\begin{prop} [Carr--Pego \cite{Carr-Pego}] \label{prop:alfa,beta}
Let $F$ be such that \eqref{hypf} holds and set $A_\pm^2:=F''(\pm1)$.
There exists $r_0, K_\pm>0$ such that if $0<r<r_0$, then
\begin{equation*}
	\begin{aligned}
	\alpha_\pm(r)&=\tfrac12K^2_\pm A^2_\pm\,\exp(-{A_\pm}/r\bigr)\bigl\{1+O\left(r^{-1} \exp(-{A_\pm}/2r)\right)\bigr\},\\
	\beta_\pm(r)&=K_\pm\,\exp\bigl(-{A_\pm}/2r\bigr)\bigl\{1+O\left(r^{-1} \exp(-{A_\pm}/2r)\right)\bigr\},
	\end{aligned}
\end{equation*}
with corresponding asymptotic formulae for the derivatives of $\alpha_\pm$ and $\beta_\pm$.
\end{prop}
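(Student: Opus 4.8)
The plan is to follow the Carr--Pego ODE asymptotics for $\phi(0,\ell,\pm1)$ and then differentiate through the relation \eqref{eq:L-per} that implicitly defines the maximal amplitude as a function of the ratio $r = \varepsilon/\ell$. Recall that $M_\pm(\ell/\varepsilon) = |\varphi(0,L,\pm1)|$ solves $L = \sqrt{2}\int_0^{M_+} \bigl(F(s)-F(M_+)\bigr)^{-1/2}\,ds$ (and the analogous equation in the negative case), so setting $r = 1/L = \varepsilon/\ell$, we have $\beta_+(r) = 1 - M_+$ and $\alpha_+(r) = F(M_+)$. First I would localize the integral near the endpoint $s = M_+$ and near $s = +1$: writing $F(s) = \tfrac12 A_+^2 (1-s)^2 \bigl(1 + O(1-s)\bigr)$ by \eqref{hypf} with $A_+^2 = F''(1)$, the integrand behaves like $\bigl(F(s) - F(M_+)\bigr)^{-1/2} \sim \bigl(\tfrac12 A_+^2\bigl((1-s)^2 - (1-M_+)^2\bigr)\bigr)^{-1/2}$, and the substitution $1 - s = (1-M_+)\cosh\theta$ turns the leading piece into $\sqrt{2}/A_+$ times $\int_0^{\Theta} d\theta = \tfrac{\sqrt2}{A_+}\,\mathrm{arccosh}\bigl((1-M_+)^{-1}\cdot(\text{something }O(1))\bigr)$. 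Matching this to $L = 1/r$ yields $\mathrm{arccosh}(\cdots) \sim A_+/(2r)$, hence $1 - M_+ \sim K_+ \exp(-A_+/(2r))$ for a constant $K_+ > 0$ absorbing the lower-order contributions to the integral away from the endpoint; this is exactly the claimed formula for $\beta_+$.

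Next, for $\alpha_+(r) = F(M_+)$, I would simply insert $M_+ = 1 - \beta_+(r)$ into the Taylor expansion $F(1 - \beta) = \tfrac12 A_+^2 \beta^2 + O(\beta^3)$, giving $\alpha_+(r) = \tfrac12 A_+^2 \beta_+(r)^2 (1 + O(\beta_+)) = \tfrac12 K_+^2 A_+^2 \exp(-A_+/r)\bigl(1 + O(r^{-1}\exp(-A_+/(2r)))\bigr)$, where the error term's shape reflects the $r^{-1}$ coming from differentiating the exponent and the relative error $O(\beta_+) = O(\exp(-A_+/(2r)))$ from the cubic remainder in $F$. The negative case $\varphi(\cdot,L,-1)$, $M_-(L) = -\varphi(0,L,-1)$, is symmetric with $A_-^2 = F''(-1)$, $F(-1+\beta) = \tfrac12 A_-^2\beta^2 + O(\beta^3)$, leading to the formulas with subscript $-$.

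For the derivative asymptotics, I would differentiate \eqref{eq:L-per} implicitly: $\dfrac{dL}{dM_+}$ can be computed by a standard careful treatment of the singular integral (differentiating under the integral sign after the $\cosh$-substitution, which regularizes the endpoint singularity), giving $\dfrac{dL}{dM_+} \sim \dfrac{\sqrt2}{A_+}\cdot\dfrac{1}{1-M_+}$ to leading order; inverting, $\dfrac{dM_+}{dr} = \dfrac{dM_+}{dL}\cdot\dfrac{dL}{dr} = -\dfrac{1}{r^2}\,\dfrac{dM_+}{dL}$, so $\beta_+'(r) = -\dfrac{dM_+}{dr} \sim \dfrac{A_+}{2r^2}\,\beta_+(r)\bigl(1 + o(1)\bigr)$, consistent with formally differentiating the exponential $\exp(-A_+/(2r))$. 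The analogous statement for $\alpha_\pm'$ follows from $\alpha_\pm = \tfrac12 A_\pm^2 \beta_\pm^2(1 + O(\beta_\pm))$ by the chain rule.

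The main obstacle I anticipate is the rigorous control of the integral in \eqref{eq:L-per} near the endpoint $s = M_+$ together with extracting the \emph{precise} constant $K_\pm$ and, above all, the \emph{sharp exponential error terms} of the form $O(r^{-1}\exp(-A_\pm/(2r)))$. The endpoint contribution is the source of the dominant logarithmically-divergent-looking behavior that converts into the exponential, while the contributions from $s$ bounded away from $M_+$ contribute the $O(1)$ constant $K_\pm$ and, after a second-order analysis using $F(s) = \tfrac12 A_\pm^2(1 \mp s)^2(1 + O(1\mp s))$, the relative corrections of size $O(\beta_\pm) = O(\exp(-A_\pm/(2r)))$. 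One must split $[0, M_+]$ at a fixed threshold, say $s = 1 - \eta_0$ for small fixed $\eta_0$, bound the regular part classically, and expand the singular part with the $\cosh$-substitution keeping track of the next-order term; this bookkeeping, while conceptually routine (and carried out in \cite{Carr-Pego}), is where all the real work lies. Since the statement is quoted directly from Carr--Pego, I would ultimately refer to \cite{Carr-Pego} for the details after indicating the mechanism above.
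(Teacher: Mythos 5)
The paper does not prove this proposition: it is quoted verbatim from Carr--Pego, with explicit formulas for $K_\pm$ deferred to \cite{Carr-Pego}, so your plan of sketching the mechanism and then citing that reference matches the paper's treatment. Your sketch (period integral with a $\cosh$-substitution near the degenerate endpoint, Taylor expansion $F(1-\beta)=\tfrac12 A_+^2\beta^2\bigl(1+O(\beta)\bigr)$, implicit differentiation through \eqref{eq:L-per}) captures the correct mechanism and reaches the stated conclusions, including the origin of the $r^{-1}$ in the relative error from the $\log$ divergence multiplying the $O(1-M_\pm)$ corrections. One small slip: keeping the $\sqrt2$ prefactor of \eqref{eq:L-per}, the leading behavior is $L\sim\tfrac{2}{A_+}\log\bigl(C/(1-M_+)\bigr)$, so $\tfrac{dL}{dM_+}\sim\tfrac{2}{A_+(1-M_+)}$ rather than $\tfrac{\sqrt2}{A_+(1-M_+)}$; your final formula $\beta_+'(r)\sim\tfrac{A_+}{2r^2}\,\beta_+(r)$ is consistent with the corrected factor $2$, not with the $\sqrt2$ you wrote in the intermediate step.
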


\noindent
Explicit expressions of $K_\pm$ in terms of $F$ can be found in \cite{Carr-Pego}.

For $j=0,\dots,N$, we set
\begin{equation*}
	r_{j+1/2}:=\frac{\varepsilon}{h_{j+1}-h_{j}},
\end{equation*}
and
\begin{equation*}
	\alpha^{j+1/2}:=\left\{\begin{aligned}
		&\alpha_+(r_{j+1/2}) 	&j \textrm{ odd},\\
		&\alpha_-(r_{j+1/2})  	&j \textrm{ even},\\
		\end{aligned}\right.
	\qquad
	\beta^{j+1/2}:=\left\{\begin{aligned}
		&\beta_+(r_{j+1/2})	&j \textrm{ odd},\\
		&\beta_-(r_{j+1/2})	&j \textrm{ even},\\
		\end{aligned}\right.
\end{equation*}
For $\bm h\in\Omega_\rho$, since
\begin{equation*}
	\begin{aligned}
	&\mathcal{L}(u^{\bm h}(x))=0		&\textrm{if}\; |x-h_j|\geq\varepsilon,\\
	&k^{\bm h}_j(x)=-u^{\bm h}_x(x) 	&\textrm{if}\; |x-h_j|\leq\varepsilon,
	\end{aligned}
\end{equation*}
direct integration gives
\begin{equation*}
	\bigl\langle\mathcal{L}(u^{\bm h}),k^{\bm h}_j\bigr\rangle
		=\int_{h_j-\varepsilon}^{h_j+\varepsilon}\bigl\{\varepsilon^2u^{\bm h}_{xx}-f(u^{\bm h})\bigr\}u^{\bm h}_x\,dx
		=\alpha^{j-1/2}-\alpha^{j+1/2}.
\end{equation*}
Thus, the barrier function $\Psi$, defined in \eqref{barrier}, can be written as
\begin{equation}\label{Psi(h)}
	\Psi(\bm h)=\sum_{j=1}^N\bigl(\alpha^{j-1/2}-\alpha^{j+1/2}\bigr)^2.
\end{equation}
The next statement collects some estimates we will use later on.
\vspace{0.2cm}
\begin{prop}\label{prop-L(u^h)}
Let $f=F'$, with $F$ satisfying \eqref{hypf}.
Given $N\in\mathbb{N}$ and $\delta\in(0,1/N)$, there exist $\varepsilon_0, C>0$, 
such that if $\varepsilon$ and $\rho$ are chosen so that \eqref{triangle} holds and $\bm{h}\in\Omega_\rho$, then
\begin{align}
	\|\mathcal{L}(u^{\bm h})\|&\leq C\varepsilon^{1/2}\sum_{j=1}^N\bigl|\alpha^{j+1/2}-\alpha^{j-1/2}\bigr|
		\leq C\varepsilon^{1/2}\exp(-A\ell^{\bm h}/\varepsilon), \label{||L(u^h)||}\\
	\|L^{\bm h}u^{\bm h}_j\|&\leq C\varepsilon^{-1/2}\max\{\alpha^{j-1/2},\alpha^{j+1/2}\}
		\leq C\varepsilon^{-1/2}\exp(-A\ell^{\bm h}/\varepsilon), \label{|dhL(u^h)|}
\end{align}
where $\ell^{\bm h}:=\min\{h_j-h_{j-1}\,:\,j=1,\dots,N+1\}$ and $A:=\min A_\pm$.
\end{prop}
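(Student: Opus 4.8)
The plan is to estimate the two quantities $\|\mathcal{L}(u^{\bm h})\|$ and $\|L^{\bm h}u^{\bm h}_j\|$ by exploiting the fact that, by construction of the Carr--Pego manifold, both $\mathcal{L}(u^{\bm h})$ and (to leading order) $L^{\bm h}u^{\bm h}_j$ are supported in the union of the thin intervals $|x-h_j|\le\varepsilon$, $j=1,\dots,N$, since away from the transition points $u^{\bm h}$ coincides with an exact steady state $\phi^j$ of \eqref{fi(x,l)}, for which $\mathcal{L}(\phi^j)=0$. First I would note that on each interval $I_j\setminus[h_{j-1/2}+\varepsilon,h_{j+1/2}-\varepsilon]$ the function $u^{\bm h}$ is literally one of the $\phi^j$, while on $[h_{j-1/2}+\varepsilon,h_{j+1/2}-\varepsilon]$ it is the convex combination $(1-\chi^j)\phi^j+\chi^j\phi^{j+1}$ involving the cutoff $\chi^j$; hence $\mathcal{L}(u^{\bm h})$ is nonzero only where $\chi^j$ has nonzero derivative, i.e. for $|x-h_j|\le\varepsilon$.

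Next I would compute $\mathcal{L}(u^{\bm h})$ on such an interval. Writing $u^{\bm h}=(1-\chi^j)\phi^j+\chi^j\phi^{j+1}$ and using $\mathcal{L}(\phi^j)=\mathcal{L}(\phi^{j+1})=0$, the linear-in-$u$ part $-\varepsilon^2\partial_{xx}$ produces terms in $\chi^j_x$ and $\chi^j_{xx}$ applied to $\phi^{j+1}-\phi^j$, while the nonlinear part $f(u^{\bm h})-(1-\chi^j)f(\phi^j)-\chi^j f(\phi^{j+1})$ is quadratically small in the difference $\phi^{j+1}-\phi^j$. The key point is that near $h_j$ both $\phi^j$ and $\phi^{j+1}$ are within $O(\beta_\pm(r))$ of the same equilibrium $\pm1$, so $|\phi^{j+1}-\phi^j|$ and its derivatives up to the relevant order are controlled by $\beta^{j\pm1/2}$ and by $\alpha^{j\pm1/2}$ through the energy identity \eqref{eq:varfi}; here I would invoke Proposition~\ref{prop:alfa,beta} and the relations $\alpha_\pm\sim\frac12 A_\pm^2\beta_\pm^2$. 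Since $\chi^j_x=O(\varepsilon^{-1})$ and $\chi^j_{xx}=O(\varepsilon^{-2})$ on an interval of length $2\varepsilon$, taking the $L^2$ norm over that interval produces a factor $\varepsilon^{1/2}$ and yields $\|\mathcal{L}(u^{\bm h})\|_{L^2(|x-h_j|\le\varepsilon)}\le C\varepsilon^{1/2}|\alpha^{j+1/2}-\alpha^{j-1/2}|$; summing over $j$ gives the first inequality in \eqref{||L(u^h)||}, and the second follows from $\alpha^{j\pm1/2}=\alpha_\pm(r_{j\pm1/2})\le C\exp(-A\ell^{\bm h}/\varepsilon)$ by Proposition~\ref{prop:alfa,beta} together with $r_{j+1/2}=\varepsilon/(h_{j+1}-h_j)\le\varepsilon/\ell^{\bm h}$ and monotonicity of $\alpha_\pm$.

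For \eqref{|dhL(u^h)|} I would differentiate the identity $\mathcal{L}(u^{\bm h})$ with respect to $h_j$: since $\partial_{h_j}\mathcal{L}(u^{\bm h})=L^{\bm h}u^{\bm h}_j$, the same localization applies — $L^{\bm h}u^{\bm h}_j$ is concentrated where $\mathcal{L}(u^{\bm h})$ and its $h_j$-dependence live, namely $|x-h_{j}\pm 0|\le\varepsilon$ and $|x-h_{j\mp1}|\le\varepsilon$ (the neighboring layers enter through the dependence of $\phi^j,\phi^{j+1}$ on $h_j$). The difference from the previous estimate is that $\partial_{h_j}$ hits the cutoff $\chi^j$ (giving another $\varepsilon^{-1}$) rather than improving the bound, so the net power of $\varepsilon$ drops by one: $\|L^{\bm h}u^{\bm h}_j\|\le C\varepsilon^{-1/2}\max\{\alpha^{j-1/2},\alpha^{j+1/2}\}$, and then the exponential bound follows exactly as before. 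The main obstacle I anticipate is the bookkeeping in the second estimate: one must track carefully how $\partial_{h_j}$ acts on the $\phi^j$'s (which depend on $h_j$ both through the shift $x-h_{j-1/2}$ and through the layer width $h_j-h_{j-1}$), confirm that the resulting derivative terms are still governed by $\alpha_\pm$ and its derivatives (here Proposition~\ref{prop:alfa,beta}'s "corresponding asymptotic formulae for the derivatives" is essential), and check that no term of order $\varepsilon^{-3/2}$ survives — equivalently, that the leading $\varepsilon^{-1}\chi^j_x$ contribution is multiplied by something of size $\alpha_\pm$ and not merely $\beta_\pm$. This is routine but delicate; everything else is a direct consequence of the construction of $\mathcal{M}$, the identity \eqref{eq:varfi}, and Proposition~\ref{prop:alfa,beta}.
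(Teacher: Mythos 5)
Your overall strategy matches the paper's sketch exactly: localize $\mathcal{L}(u^{\bm h})$ to the intervals $|x-h_j|\le\varepsilon$, expand using $\mathcal{L}(\phi^j)=\mathcal{L}(\phi^{j+1})=0$ so that everything is proportional to $\phi^{j+1}-\phi^j$ and its derivatives multiplied by derivatives of $\chi^j$, get the pointwise bound, gain a factor $\varepsilon^{1/2}$ from the support of length $2\varepsilon$, and differentiate in $\bm h$ for \eqref{|dhL(u^h)|}. This is indeed the route of the paper (which itself defers the key inequality to \cite[Lemma 8.2]{Carr-Pego} and \cite{Carr-Pego2}).

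However, there is a genuine gap in the justification of the central estimate. You claim that ``near $h_j$ both $\phi^j$ and $\phi^{j+1}$ are within $O(\beta_\pm(r))$ of the same equilibrium $\pm1$.'' This is factually wrong: $\phi^j$ and $\phi^{j+1}$ both \emph{vanish} at $h_j$ and are $O(1)$ away from $\pm1$ on $|x-h_j|\le\varepsilon$; they are close to $\pm1$ only near the midpoints $h_{j\pm1/2}$, not near $h_j$. More importantly, even if the claim were rephrased correctly (both $\phi^j$ and $\phi^{j+1}$ are close to a common reference profile on $|x-h_j|\le\varepsilon$), the triangle-inequality logic you invoke would only give $|\phi^j-\phi^{j+1}|=O(\beta)$, i.e.\ $O(e^{-A\ell^{\bm h}/2\varepsilon})$. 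Since $\alpha_\pm\sim\tfrac12 A_\pm^2\beta_\pm^2$, this loses a factor of $2$ in the exponent: the resulting bound would be $\|\mathcal{L}(u^{\bm h})\|\lesssim\varepsilon^{1/2}e^{-A\ell^{\bm h}/2\varepsilon}$ rather than $\varepsilon^{1/2}e^{-A\ell^{\bm h}/\varepsilon}$ as claimed in \eqref{||L(u^h)||}. The actual estimate
\[
\bigl|\phi^j(x)-\phi^{j+1}(x)\bigr|+\varepsilon\bigl|\phi^j_x(x)-\phi^{j+1}_x(x)\bigr|\leq C\bigl|\alpha^{j-1/2}-\alpha^{j+1/2}\bigr|
\qquad\text{for }|x-h_j|\le\varepsilon
\]
is a cancellation statement, not a triangle inequality: both $\phi^j$ and $\phi^{j+1}$ solve $\varepsilon^2\phi_{xx}=f(\phi)$ and vanish at $h_j$, and they differ only through the first integrals $\varepsilon^2\phi_x^2/2-F(\phi)=-\alpha^{j\mp1/2}$, so their difference near $h_j$ is controlled by the \emph{difference} of the energy levels, which is what lets the square $\alpha\sim\beta^2$ appear rather than $\beta$ itself. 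Your instinct that this is ``the main obstacle'' and that one must ``check that the leading $\varepsilon^{-1}\chi^j_x$ contribution is multiplied by something of size $\alpha_\pm$ and not merely $\beta_\pm$'' is exactly right, but the explanation you offer does not actually achieve it; you should replace the ``near $\pm1$'' argument with a Gronwall/comparison argument for the ODE satisfied by $\phi^j-\phi^{j+1}$ using the matched boundary data at $h_j$ and the energy identity, as in \cite[Lemma 8.2]{Carr-Pego}. The rest of the proposal (the $\varepsilon^{1/2}$ from the support, the pointwise-to-$L^2$ passage, the exponential bound from Proposition~\ref{prop:alfa,beta}, and the differentiation-in-$\bm h$ strategy for \eqref{|dhL(u^h)|}) is correct and matches the paper.
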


Here, we give only an idea of the proofs, the complete ones can be found
in \cite{Carr-Pego} for \eqref{||L(u^h)||} and in \cite{Carr-Pego2} for \eqref{|dhL(u^h)|}.
Recalling the definition \eqref{u^h(x)}, for $x\in I_j$, we have
\begin{equation}\label{L(u^h)}
	\mathcal{L}(u^{\bm h})=\varepsilon^2\chi^j_{xx}\left(\phi^{j}-\phi^{j+1}\right)
		+2\varepsilon^2\chi^j_x\left(\phi^{j}_x-\phi^{j+1}_x \right)-G,
\end{equation}
where the remainder $G$ is given by
\begin{equation*}
	G=\left(1-\chi^j\right)f(\phi^j)+\chi^jf(\phi^{j+1})-f\left((1-\chi^j)\phi^j+\chi^j\phi^{j+1}\right).
\end{equation*}
Using Lagrange interpolation formula,
\begin{equation*}
	G=\left(\phi^{j+1}-\phi^j\right)^2\biggl\{(1-\chi^j)\int_0^{\chi^j}sf''(\theta)ds+\chi^j\int_{\chi^j}^1(1-s)f''(\theta)ds\biggr\},
\end{equation*}
with $\theta(s)=(1-s)\phi^j(x)+s\phi^{j+1}(x)$. It can be shown (see \cite[Lemma 8.2]{Carr-Pego}) that there exist $C>0$
such that for $x\in[h_j-\varepsilon,h_j+\varepsilon]$, $j\in\{1,\dots,N\}$, we have
\begin{equation}\label{fi^j-fi^j+1}
	\left|\phi^j(x)-\phi^{j+1}(x)\right|+\varepsilon\left|\phi^j_x(x)-\phi^{j+1}_x(x)\right|
	<C\left|\alpha^{j-1/2}-\alpha^{j+1/2}\right|,
\end{equation}
provided $r_j, r_{j+1}<r_0$ and $r_0$ is sufficiently small.
Using these estimates, the fact that $\mathcal{L}(u^{\bm h}(x))=0$ if $|x-h_j|>\varepsilon$ and that
the $m$-th derivative of $\varepsilon^{m}\chi^{j}$ is uniformly bounded (independently on $\varepsilon$),
we obtain
\begin{equation*}
	|\mathcal{L}(u^{\bm h}(x))|\leq C\left|\alpha^{j+1/2}-\alpha^{j-1/2}\right| \quad \textrm{for }x\in I_j.
\end{equation*}
Then, the $L^2$-bound \eqref{||L(u^h)||} follows since $\mathcal{L}(u^{\bm h})$
has support of length $2\varepsilon$ in $I_j$. 
The estimate \eqref{|dhL(u^h)|} is obtained in a similar way, by differentiating \eqref{L(u^h)} with respect to $\bm h$.

\section{Dynamics near the base manifold}\label{motion}
In this section we study the dynamics of \eqref{system-u-v}-\eqref{Neumann} in a neighborhood of $\mathcal{M}_{{}_{0}}$
using the decomposition $u=u^{\bm h}+w$ and deriving the system of equations for $(\bm h,w,v)$.
Such description will be used to prove Theorem~\ref{main}. 

\subsection{Equations for the motion}
Let $(u,v)$ be a classical solution of \eqref{system-u-v}-\eqref{Neumann}, 
with $u$ lying in the tubular neighborhood $\mathcal{S}_{\rho,\sigma}$ for $t\in[0,T]$ with $T>0$. 
Let $\bm h(t)=\mathcal{H}(u(\cdot,t))$ and $w(x,t)=u(x,t)-u^{\bm h(t)}$, where $u^{\bm h}$ is defined by \eqref{u^h(x)}. 
We recall that $u(\cdot,t)\in\mathcal{S}_{\rho,\sigma}$ for $t\in[0,T]$ means that $\bm h(t)\in\Omega_\rho$, $w(\cdot,t)\in H^2_N$
and $\|w(\cdot,t)\|_{{}_{L^\infty}}<\sigma$ for $t\in[0,T]$. 
Moreover, $v(\cdot,t)\in L^2(0,1)$ for $t\in[0,T]$.

From \eqref{system-u-v} it follows that the pair $(w,v)$ satisfies
\begin{equation}\label{system-w-v}
	\begin{cases}
		w_t=v-\nabla_{\bm h} u^{\bm h}\cdot {\bm h}',\\
		\tau v_t=-\mathcal{L}(u^{\bm h}+w)-g(u^{\bm h}+w,\tau)v,
	\end{cases}
\end{equation}
where $\,\cdot\,$ denotes the inner product in $\mathbb{R}^N$. 
Expanding, we get
\begin{equation*}
	\mathcal{L}(u^{\bm h}+w)=\mathcal{L}(u^{\bm h})+L^{\bm h}w-f_2w^2,
	\qquad\textrm{where}\quad
	f_2:=\int_0^1(1-s)f''(u^{\bm h}+sw)\,ds,
\end{equation*}
and $L^{\bm h}$ is the differential operator defined in \eqref{L^h}.

Differentiating with respect to $t$ the orthogonality condition \eqref{ortogonale}, we obtain
\begin{equation}\label{h'}
	\sum_{j=1}^N\bigl\{\langle u^{\bm h}_j,k^{\bm h}_i\rangle-\langle w,k^{\bm h}_{ij}\rangle\bigr\}{\bm h}'_j
	=\langle v,k^{\bm h}_i\rangle, \qquad i=1,\dots,N.
\end{equation}
Using the notation
\begin{equation*}
	D_{ij}(\bm h):=\langle u^{\bm h}_j,k^{\bm h}_i\rangle, \qquad
	\hat{D}_{ij}(\bm h,w):=\langle w,k^{\bm h}_{ij}\rangle, \qquad
	Y_i(\bm h,v):=\langle v,k^{\bm h}_i\rangle,
\end{equation*}
equation \eqref{h'} becomes
\begin{equation}\label{h'-compact}
	\bigl\{D(\bm h)-\hat{D}(\bm h,w)\bigr\}\bm h'=Y(\bm h,v).
\end{equation}
From \eqref{system-w-v} and \eqref{h'-compact}, we obtain the ODE-PDE coupled system
\begin{equation*}
	\begin{cases}
		w_t=v-\nabla_{\bm h} u^{\bm h}\cdot\bm h',\\
		\tau v_t=-\mathcal{L}(u^{\bm h})-L^{\bm h}w+f_2w^2-g(u^{\bm h}+w,\tau)v,\\
		\bigl\{D(\bm h)-\hat{D}(\bm h,w)\bigr\}\bm h'=Y(\bm h,v).
	\end{cases}
\end{equation*}
The matrix $D(\bm h)$ is diagonally dominant, because, for any $\eta\in(0,1)$ there exists $\rho_0>0$ such that if 
$\rho<\rho_0$, then
\begin{align}
	D_{ii}(\bm h)-\sum_{j\neq i}|D_{ij}(\bm h)|&=\langle u^{\bm h}_i,k^{\bm h}_i\rangle-\sum_{j\neq i}|\langle u^{\bm h}_j,k^{\bm h}_i\rangle| \notag\\
	&\geq \bigl\{\bigl(A_0-\omega(\rho)\bigr)^2-(N-1)\omega(\rho)\bigr\}\varepsilon^{-1}>\eta A^2_0\,\varepsilon^{-1}, \label{eq:D}
\end{align}
thanks to Proposition \ref{estimates-u^h_j}.
Also, for a known property of inverses of diagonally dominant matrices (see \cite{Varah}), 
$D(\bm h)$ is invertible and it holds
\begin{equation*}
	\|D^{-1}(\bm h)\|_{{}_{\infty}}\leq \eta^{-1}A_0^{-2}\,\varepsilon,
\end{equation*} 
where $\|\cdot\|_{{}_{\infty}}$ denotes the operator norm induced by the norm $|\cdot|_{{}_{\infty}}$.
In Section \ref{layer}, it is determined the explicit expression for the principal term in the expansion
of the inverse $D^{-1}(\bm h)$ as $\varepsilon\to 0$.

The invertibility of the matrix $D(\bm h)-\hat{D}(\bm h,w)$ descends from the smallness of $\hat{D}(\bm h,w)$ for $w\to 0$ and \eqref{eq:D}.
Indeed, for $(\bm h,w)\in\hat{\mathcal{S}}_{\rho,\sigma}$, applying Proposition  \ref{estimates-u^h_j}, we infer
\begin{equation*}
	\begin{aligned}
	\sum_j |\hat D_{ij}(\bm h,w)|
	&\leq \|w\|_{{}_{L^\infty}} \Bigl\{\|k^{\bm h}_{ii}\|_{{}_{L^1}}+\sum_{j\neq i} \|k^{\bm h}_{ij}\|_{{}_{L^1}}\Bigr\}\\
	&\leq \sigma \bigl\{C+(N-1)\omega(\rho)\bigr\}\varepsilon^{-1},
	\end{aligned}
\end{equation*}
and thus 
\begin{equation*}
	D_{ii}(\bm h)-\hat D_{ii}(\bm h,w)-\sum_{j\neq i}|D_{ij}(\bm h)-\hat D_{ij}(\bm h,w)|
	\geq\eta\,A^2_0\,\varepsilon^{-1}-\sigma \bigl\{C+(N-1)\omega(\rho)\bigr\}\varepsilon^{-1}.
\end{equation*}
Therefore, the matrix $D-\hat{D}$ is invertible for $\rho$ and $\sigma$ sufficiently small and
\begin{equation}\label{boundDinverse}
	\|\bigl\{D(\bm h)-\hat{D}(\bm h,w)\bigr\}^{-1}\|_{{}_{\infty}}\leq 2\eta\,A_0^{-2}\,\varepsilon.
\end{equation} 
Applying $\bigl\{D(\bm h)-\hat{D}(\bm h,w)\bigr\}^{-1}$ in the equation for $\bm{h}$,
we obtain the final form of the system
\begin{equation}\label{system-w-v-h}
	\begin{cases}
		w_t=v-\nabla_{\bm h} u^{\bm h}\cdot\bm h',\\
		\tau v_t=-\mathcal{L}(u^{\bm h})-L^{\bm h}w+f_2w^2-g(u^{\bm h}+w,\tau)v,\\
		\bm h'=\bigl\{D(\bm h)-\hat{D}(\bm h,w)\bigr\}^{-1}Y(\bm h,v).
	\end{cases}
\end{equation}
The proof of our main result consists in providing estimates for the solutions to \eqref{system-w-v-h}.

\subsection{Proof of the main result}
To start with, we observe that if  $(\bm h,w)\in\hat{\mathcal{S}}_{\rho,\sigma}$ for $\rho,\sigma$ small
then there exists $C>0$ such that
\begin{equation}\label{|h'|<}
	|\bm h'|_{{}_{\infty}}\leq 2\eta A_0^{-2}\varepsilon |Y(\bm h,v)|_{{}_{\infty}}\leq C\varepsilon^{1/2}\|v\|,
\end{equation}
using \eqref{boundDinverse} and the third estimate in Proposition \ref{estimates-u^h_j}.

In order to prove Theorem \ref{main}, we restrict the attention to the set
\begin{equation*}
	\hat{\mathcal{Z}}_{{}_{\Gamma,\rho}}:=\bigl\{(\bm h,w,v)\in\overline{\Omega}_\rho\times H^2_N\times L^2(0,1)
		\, : \, \mathcal{E}^{\bm{h}}[w,v]\leq\Gamma\Psi(\bm h)\bigr\}.
\end{equation*}
The aim of the next result is twofold.
Firstly, it states that if the triple $(\bm h,w,v)$ belongs to $\hat{\mathcal{Z}}_{{}_{\Gamma,\rho}}$ then
the bound on $(w,v)$  stated in Theorem \ref{main}, estimate \eqref{umenouh}, holds true.
Secondly, assuming in addition that $(\bm h,w,v)$ is a solution to \eqref{system-w-v-h},
then also the bound on $\bm{h}'$ in Theorem \ref{main}, estimate \eqref{|h'|<exp-intro}, is valid.
\vspace{0.2cm}
\begin{prop}\label{prop:E>}
Let $F\in C^3$ be such that \eqref{hypf} holds and $g(\cdot,\tau)\in C^1$.
Given $N\in\mathbb{N}$ and $\delta\in(0,1/N)$, there exist $\varepsilon_0, C>0$, 
such that for $\varepsilon$ and $\rho$ satisfying \eqref{triangle},
\begin{itemize}
\item[(i)] if $(\bm h,w,v)\in\hat{\mathcal{Z}}_{{}_{\Gamma,\rho}}$, then
\begin{equation}\label{E>}
   \begin{aligned}
	\tfrac18\Lambda\varepsilon\|w\|^2_{{}_{L^\infty}}+ \tfrac14\tau\|v\|^2\leq \mathcal{E}^{\bm{h}}[w,v],
	\\
	 \tfrac14 \Lambda \|w\|^2 + \tfrac14\tau\|v\|^2\leq \mathcal{E}^{\bm{h}}[w,v],
	 \\
	 \mathcal{E}^{\bm{h}}[w,v]\leq C\Gamma\exp(-2A\ell^{\bm h}/\varepsilon),
  \end{aligned}
\end{equation}
where $\Lambda$ is the positive constant introduced in Theorem \ref{L^hw-theo};
\item[(ii)] if $(\bm h,w,v)\in\hat{\mathcal{Z}}_{{}_{\Gamma,\rho}}$ is a solution of \eqref{system-w-v-h} for $t\in[0,T]$, then
\begin{equation}\label{|h'|<exp}
	|\bm h'|_{{}_{\infty}}\leq C(\varepsilon/\tau)^{1/2}\exp(-A\ell^{\bm h}/\varepsilon).
\end{equation}
\end{itemize}
\end{prop}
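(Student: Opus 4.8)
The plan is to prove the three inequalities in \eqref{E>} first, and then deduce \eqref{|h'|<exp} from them together with the equation for $\bm h$ in \eqref{system-w-v-h}. For the first inequality, I would start from the representation \eqref{E(w,v)}, $\mathcal{E}^{\bm h}[w,v]=\tfrac12\langle w,L^{\bm h}w\rangle+\tfrac12\tau\|v\|^2+\varepsilon\tau\langle w,v\rangle$. The only term with an indefinite sign is the cross term $\varepsilon\tau\langle w,v\rangle$, which I would control by Cauchy--Schwarz and Young's inequality: $|\varepsilon\tau\langle w,v\rangle|\leq \tfrac14\tau\|v\|^2+\varepsilon^2\tau\|w\|^2$. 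Since $\varepsilon\leq\varepsilon_0$ is small, the term $\varepsilon^2\tau\|w\|^2$ is absorbed using \eqref{eq:nuoval2} (i.e. $\Lambda\|w\|^2\leq\langle w,L^{\bm h}w\rangle$): picking $\varepsilon_0$ so that $\varepsilon^2\tau\leq \tfrac14\Lambda$ we get $\varepsilon^2\tau\|w\|^2\leq\tfrac14\langle w,L^{\bm h}w\rangle$, hence $\mathcal{E}^{\bm h}[w,v]\geq \tfrac14\langle w,L^{\bm h}w\rangle+\tfrac14\tau\|v\|^2$. Combining with \eqref{intelligente} ($\tfrac12\Lambda\varepsilon\|w\|^2_{{}_{L^\infty}}\leq\langle w,L^{\bm h}w\rangle$) gives the first line, and combining with \eqref{eq:nuoval2} directly gives the second line of \eqref{E>}.

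For the third inequality in \eqref{E>}, I would use the defining constraint of $\hat{\mathcal{Z}}_{{}_{\Gamma,\rho}}$, namely $\mathcal{E}^{\bm h}[w,v]\leq\Gamma\Psi(\bm h)$, and bound $\Psi(\bm h)$ from above. By \eqref{Psi(h)}, $\Psi(\bm h)=\sum_{j=1}^N(\alpha^{j-1/2}-\alpha^{j+1/2})^2$, and by the first chain of inequalities in \eqref{||L(u^h)||} together with Proposition~\ref{prop:alfa,beta} (each $\alpha^{j\pm1/2}$ is of the form $\alpha_\pm(r_{j\pm1/2})$ with $r_{j\pm1/2}=\varepsilon/(h_{j+1}-h_j)\le\varepsilon/\ell^{\bm h}$, and $\alpha_\pm(r)\sim \tfrac12 K_\pm^2 A_\pm^2\exp(-A_\pm/r)$) one gets $|\alpha^{j-1/2}-\alpha^{j+1/2}|\leq C\exp(-A\ell^{\bm h}/\varepsilon)$, with $A=\min A_\pm$; squaring and summing over the $N$ terms yields $\Psi(\bm h)\leq C\exp(-2A\ell^{\bm h}/\varepsilon)$. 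Then $\mathcal{E}^{\bm h}[w,v]\leq\Gamma\Psi(\bm h)\leq C\Gamma\exp(-2A\ell^{\bm h}/\varepsilon)$. Chaining this with the first two inequalities immediately gives the bound \eqref{umenouh} asserted in Theorem~\ref{main} (up to renaming constants and noting $\varepsilon\tau^{1/2}\|w\|$-type cross terms are lower order).

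For part (ii), I would start from \eqref{|h'|<}, which already gives $|\bm h'|_{{}_\infty}\leq C\varepsilon^{1/2}\|v\|$ whenever $(\bm h,w)\in\hat{\mathcal{S}}_{\rho,\sigma}$; this applicability is fine since membership in $\hat{\mathcal{Z}}_{{}_{\Gamma,\rho}}$ plus the just-proven $L^\infty$ bound on $w$ forces $\|w\|_{{}_{L^\infty}}<\sigma$ for $\varepsilon_0$ small. Now from the second line of \eqref{E>} I have $\tfrac14\tau\|v\|^2\leq\mathcal{E}^{\bm h}[w,v]$, hence $\|v\|\leq 2\tau^{-1/2}\mathcal{E}^{\bm h}[w,v]^{1/2}$, and from the third line $\mathcal{E}^{\bm h}[w,v]^{1/2}\leq C\Gamma^{1/2}\exp(-A\ell^{\bm h}/\varepsilon)$. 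Substituting, $\|v\|\leq C\tau^{-1/2}\exp(-A\ell^{\bm h}/\varepsilon)$, and therefore $|\bm h'|_{{}_\infty}\leq C\varepsilon^{1/2}\|v\|\leq C(\varepsilon/\tau)^{1/2}\exp(-A\ell^{\bm h}/\varepsilon)$, which is \eqref{|h'|<exp}. The main obstacle I anticipate is purely bookkeeping: keeping track of the $\tau$-dependence through all constants (as flagged in Remark~\ref{rem:added}) and making sure the absorption of the indefinite cross term $\varepsilon\tau\langle w,v\rangle$ is done with a choice of $\varepsilon_0$ that is consistent with the one already fixed in Theorem~\ref{L^hw-theo} and Proposition~\ref{estimates-u^h_j}; there is no deep analytic difficulty, since the coercivity of $L^{\bm h}$ on the constrained space (Theorem~\ref{L^hw-theo}) and the exponential smallness of $\Psi$ (Proposition~\ref{prop:alfa,beta}) do all the real work.
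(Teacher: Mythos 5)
Your proof is correct and follows essentially the same route as the paper's: Young's inequality plus the coercivity from Theorem~\ref{L^hw-theo} (via \eqref{eq:nuoval2} and \eqref{intelligente}) give the two lower bounds, the channel constraint together with \eqref{Psi(h)} and Proposition~\ref{prop:alfa,beta} give the exponential upper bound, and \eqref{|h'|<} combined with the resulting $\|v\|$ estimate yields part (ii). The only cosmetic difference is that you absorb the cross term $\varepsilon^2\tau\|w\|^2$ once into $\tfrac14\langle w,L^{\bm h}w\rangle$ and then split, whereas the paper performs two separate absorptions (one in the $L^\infty$ and one in the $L^2$ form); both give the stated constants under a suitable choice of $\varepsilon_0$.
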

\begin{proof}
Let  us prove the first inequality in \eqref{E>}.
Using Young inequality, we have
\begin{equation*}
	\varepsilon|\langle w,v\rangle|\leq\varepsilon^2\|w\|^2+\tfrac14\|v\|^2
	\leq\varepsilon^2\|w\|_{{}_{L^\infty}}^2+\tfrac14\|v\|^2,
\end{equation*}
and so, recalling the expression for the energy $\mathcal{E}^{\bm{h}}$ given in \eqref{E(w,v)},
\begin{equation*}
	\mathcal{E}^{\bm{h}}[w,v]\geq \tfrac12\langle w, L^{\bm h}w\rangle+\tfrac14\tau\|v\|^2
		-\varepsilon^2\tau\|w\|_{{}_{L^\infty}}^2.
\end{equation*}
Using \eqref{intelligente}, we obtain, for $\varepsilon<\Lambda/8\tau$,
\begin{equation*}
	\mathcal{E}^{\bm{h}}[w,v]\geq\bigl(\tfrac14\Lambda-\varepsilon\tau\bigr) \varepsilon\|w\|^2_{{}_{L^\infty}}
		+\tfrac14\tau\|v\|^2\geq \tfrac18\Lambda\varepsilon\|w\|^2_{{}_{L^\infty}}+\tfrac14\tau\|v\|^2.
\end{equation*}
Moreover, from \eqref{eq:nuoval2}, for $\varepsilon^2<\Lambda/4\tau$ one has
\begin{equation*}
	\mathcal{E}^{\bm{h}}[w,v]\geq\bigl(\tfrac12\Lambda-\varepsilon^2\tau\bigr) \|w\|^2
		+\tfrac14\tau\|v\|^2\geq \tfrac14\Lambda\|w\|^2+\tfrac14\tau\|v\|^2,
\end{equation*}
concluding the first two inequalities of \eqref{E>}.
The upper bound for $\mathcal{E}^{\bm{h}}[w,v]$ follows from the definition of $\hat{\mathcal{Z}}_{{}_{\Gamma,\rho}}$, 
the expression of the barrier $\Psi$ given in \eqref{Psi(h)} and Proposition~\ref{prop:alfa,beta}.

To prove part (ii), using \eqref{E>}, we deduce the estimate
\begin{equation*}
	\|w\|_{{}_{L^\infty}}\leq C\varepsilon^{-1/2}\exp(-A\ell^{\bm h}/\varepsilon)
		\leq C\varepsilon^{-1/2}\exp(-A\delta/\varepsilon)=:\sigma
\end{equation*}
since, by definition and \eqref{triangle}, $\ell^{\bm h}>\varepsilon/\rho>\delta$.
Hence, for $\varepsilon$ sufficiently small, both $\rho$ and $\sigma$ are small, and thus,
for $(\bm h,w)\in\hat{\mathcal{S}}_{\rho,\sigma}$, estimate \eqref{|h'|<} holds. 
Therefore, if $(\bm h,w,v)\in\hat{\mathcal{Z}}_{{}_{\Gamma,\rho}}$, then $(\bm h,w)\in\hat{\mathcal{S}}_{\rho,\sigma}$
and the estimate \eqref{|h'|<exp} is obtained by applying \eqref{E>} in \eqref{|h'|<}.
\end{proof}

Now, we estimate the time $T$ taken for the solution $(u,v)$ to leave $\mathcal{Z}_{{}_{\Gamma,\rho}}$.
To do this, we study the system \eqref{system-w-v-h} in the set $\hat{\mathcal{Z}}_{{}_{\Gamma,\rho}}$
by using energy estimates.
\vspace{0.2cm}
\begin{prop}\label{prop:d/dtE}
Let $F\in C^3$ and $g(\cdot,\tau)\in C^1$ be such that \eqref{hypf} and \eqref{hypg} hold.
Given $N\in\mathbb{N}$ and $\delta\in(0,1/N)$, there exist $\Gamma_2>\Gamma_1>0$ and $\varepsilon_0>0$ 
such that if $\Gamma\in[\Gamma_1,\Gamma_2]$, $\varepsilon,\rho$ satisfy \eqref{triangle} 
and $(\bm h,w,v)\in\hat{\mathcal{Z}}_{{}_{\Gamma,\rho}}$ is a solution of \eqref{system-w-v-h} for $t\in[0,T]$, then
for some $\eta\in(0,1)$, we have
\begin{equation}\label{E-GPsi^2}
	\frac{d}{dt}\bigl\{\mathcal{E}^{\bm{h}}[w,v]-\Gamma\Psi(\bm h)\bigr\}
		\leq-\eta\,\varepsilon\bigl\{\mathcal{E}^{\bm{h}}[w,v]-\Gamma\Psi(\bm h)\bigr\}
	\qquad\textrm{for}\quad t\in[0,T].
\end{equation}
\end{prop}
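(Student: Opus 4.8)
The plan is to differentiate the energy $\mathcal{E}^{\bm h}[w,v]$ in the form \eqref{E(w,v)} along a solution of \eqref{system-w-v-h} and to show that the dissipative part of the resulting identity dominates everything else. Set $p:=\nabla_{\bm h}u^{\bm h}\cdot\bm h'$, so that $w_t=v-p$. Since $v(\cdot,t)\in H^1$ for classical solutions and $w$ satisfies the Neumann conditions, integrating by parts gives $\tfrac{d}{dt}\langle w,L^{\bm h}w\rangle=2\langle w_t,L^{\bm h}w\rangle+\langle f''(u^{\bm h})\,p\,w,w\rangle$. Substituting the equations for $w_t$ and $\tau v_t$, the two coupling terms $\langle v,L^{\bm h}w\rangle$ — one coming from $2\langle w_t,L^{\bm h}w\rangle$, the other from $\langle\tau v_t,v\rangle$ — cancel (this is the skew structure of the damped wave operator), and one is left with $\tfrac{d}{dt}\mathcal{E}^{\bm h}[w,v]=-\langle g\,v,v\rangle-\varepsilon\langle w,L^{\bm h}w\rangle+\varepsilon\tau\|v\|^2+\mathcal R$, where $\mathcal R$ collects the terms $-\langle p,L^{\bm h}w\rangle$, $\tfrac12\langle f''(u^{\bm h})pw,w\rangle$, $\langle f_2w^2,v\rangle$, $-\varepsilon\tau\langle p,v\rangle$, $-\varepsilon\langle w,\mathcal L(u^{\bm h})\rangle$, $\varepsilon\langle f_2w^2,w\rangle$, $-\varepsilon\langle g\,w,v\rangle$ and, most importantly, $-\langle\mathcal L(u^{\bm h}),v\rangle$.

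The first two terms on the right are the genuine dissipation. By \eqref{hypg}, $-\langle g\,v,v\rangle\le-c_g\|v\|^2$; by \eqref{eq:nuoval2}, $\langle w,L^{\bm h}w\rangle\ge\Lambda\|w\|^2>0$; and by \eqref{E>} one has $\tfrac14\langle w,L^{\bm h}w\rangle+\tfrac14\tau\|v\|^2\le\mathcal E^{\bm h}[w,v]\le\langle w,L^{\bm h}w\rangle+\tau\|v\|^2$ once $\varepsilon$ is small. Absorbing $\varepsilon\tau\|v\|^2$ and the cross term $\varepsilon\langle g\,w,v\rangle$ (via Young's inequality, using $\|w\|^2\le\Lambda^{-1}\langle w,L^{\bm h}w\rangle$) into $-c_g\|v\|^2$ and $-\varepsilon\langle w,L^{\bm h}w\rangle$ respectively, the main part is bounded above by $-\tfrac12\varepsilon\langle w,L^{\bm h}w\rangle-\tfrac12 c_g\|v\|^2\le-2\eta\,\varepsilon\,\mathcal E^{\bm h}[w,v]$ for $\varepsilon$ small, with the fixed choice $\eta:=1/8$ (here $c_g/\tau$ is a fixed constant $\gg\varepsilon$).

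It remains to control $\mathcal R$ together with $-\Gamma\tfrac{d}{dt}\Psi=-\Gamma\,\nabla_{\bm h}\Psi\cdot\bm h'$. The terms of $\mathcal R$ other than $-\langle\mathcal L(u^{\bm h}),v\rangle$ all carry a factor $\bm h'$, an extra power of $w$, or $\mathcal L(u^{\bm h})$ paired with $w$. Each of these is estimated, via Propositions \ref{estimates-u^h_j} and \ref{prop-L(u^h)}, the bounds \eqref{intelligente} and \eqref{E>}, and — crucially — the two complementary estimates $|\bm h'|_{{}_\infty}\le C\varepsilon^{1/2}\|v\|$ from \eqref{|h'|<} and $|\bm h'|_{{}_\infty}\le C(\varepsilon/\tau)^{1/2}\exp(-A\ell^{\bm h}/\varepsilon)$ from \eqref{|h'|<exp}, by a quantity of the form $(\text{powers of }\varepsilon^{-1},\tau^{-1},\Gamma)\cdot\exp(-A\ell^{\bm h}/\varepsilon)\cdot\mathcal E^{\bm h}[w,v]$. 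Since $\ell^{\bm h}>\varepsilon/\rho>\delta$ by \eqref{triangle}, the factor $\exp(-A\ell^{\bm h}/\varepsilon)\le\exp(-A\delta/\varepsilon)$ beats every inverse power of $\varepsilon$ and $\tau$; hence, for $\varepsilon_0$ small, the sum of these terms is $\le\tfrac12\eta\,\varepsilon\,\mathcal E^{\bm h}[w,v]$. The same mechanism, using Proposition \ref{prop:alfa,beta} to bound $\nabla_{\bm h}\Psi$ and again \eqref{|h'|<exp}, yields $|\Gamma\,\nabla_{\bm h}\Psi\cdot\bm h'|\le(\text{powers})\cdot\exp(-A\ell^{\bm h}/\varepsilon)\cdot\Gamma\Psi(\bm h)\le\tfrac12\eta\,\varepsilon\,\Gamma\Psi(\bm h)$ for $\varepsilon_0$ small.

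For the borderline term $-\langle\mathcal L(u^{\bm h}),v\rangle$ one must use the \emph{sharp} bound $\|\mathcal L(u^{\bm h})\|\le C\varepsilon^{1/2}\sum_j|\alpha^{j+1/2}-\alpha^{j-1/2}|$ of \eqref{||L(u^h)||} together with $\sum_j|\alpha^{j+1/2}-\alpha^{j-1/2}|\le N^{1/2}\Psi(\bm h)^{1/2}$ from \eqref{Psi(h)}, rather than the cruder $\exp(-A\ell^{\bm h}/\varepsilon)$ bound; Young's inequality then gives $|\langle\mathcal L(u^{\bm h}),v\rangle|\le\tfrac14 c_g\|v\|^2+\tfrac{C^2}{c_g}\varepsilon\,\Psi(\bm h)$, the first piece absorbed into $-c_g\|v\|^2$ and the second contributing $\tfrac{C^2}{c_g}\varepsilon\Psi(\bm h)\le\tfrac12\eta\,\varepsilon\,\Gamma\Psi(\bm h)$ precisely when $\Gamma\ge\Gamma_1:=2C^2/(c_g\eta)$. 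Collecting, $\tfrac{d}{dt}\{\mathcal E^{\bm h}[w,v]-\Gamma\Psi(\bm h)\}\le-\tfrac32\eta\,\varepsilon\,\mathcal E^{\bm h}[w,v]+\eta\,\varepsilon\,\Gamma\Psi(\bm h)\le-\eta\,\varepsilon\{\mathcal E^{\bm h}[w,v]-\Gamma\Psi(\bm h)\}$, which is \eqref{E-GPsi^2}; since every error estimate above involves only positive powers of $\Gamma$, fixing $\Gamma_2:=2\Gamma_1$ lets a single $\varepsilon_0$ (depending on $\tau$, $N$, $\delta$ and $\Gamma_2$) work uniformly for $\Gamma\in[\Gamma_1,\Gamma_2]$. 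The main obstacle is precisely the bookkeeping of the third paragraph: one has to verify that each error term carrying $\bm h'$, $w^2$ or $w_x$ can be routed through the correct combination of the two a priori bounds on $|\bm h'|_{{}_\infty}$ and the coercivity of $L^{\bm h}$ so that a net factor $\exp(-cA\delta/\varepsilon)$ survives and dominates all inverse powers of the small parameters, while for the single non-negligible term $-\langle\mathcal L(u^{\bm h}),v\rangle$ the $\Psi^{1/2}$-sharp estimate of $\mathcal L(u^{\bm h})$ is indispensable to obtain a finite threshold $\Gamma_1$.
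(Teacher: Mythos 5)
Your proposal is correct and follows essentially the same strategy as the paper. You differentiate $\mathcal{E}^{\bm h}[w,v]$ along a solution of \eqref{system-w-v-h}, observe that the two $\langle v,L^{\bm h}w\rangle$ terms cancel, extract the genuine dissipation $-\langle g\,v,v\rangle-\varepsilon\langle w,L^{\bm h}w\rangle$, bound the bulk of the remainder $\mathcal R$ by $(\text{polynomial in }\varepsilon^{-1},\tau^{-1},\Gamma)\cdot\exp(-c/\varepsilon)\cdot\mathcal E^{\bm h}$, and isolate the one non-negligible pairing $-\langle\mathcal L(u^{\bm h}),v\rangle$, which you handle — exactly as the paper does — via the $\Psi^{1/2}$-sharp estimate $\|\mathcal L(u^{\bm h})\|\le C\varepsilon^{1/2}\Psi^{1/2}$ (from \eqref{||L(u^h)||} and \eqref{Psi(h)}), which is indeed what makes the lower threshold $\Gamma_1$ finite. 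The paper organizes the computation slightly differently: it differentiates the three pieces of $\mathcal E^{\bm h}$ separately and adds them with the appropriate weight on the cross-term, and it obtains the admissible range of $\Gamma$ as the interval between the roots of a quadratic inequality $C\exp(-c/\varepsilon)\Gamma^2-\eta\varepsilon\Gamma+C\varepsilon\le 0$ rather than fixing $\Gamma_2:=2\Gamma_1$ outright; also, when bounding $\Gamma\,d\Psi/dt$, it splits off an $\eta\|v\|^2$ via Young and absorbs it into spare $-\eta c_g\|v\|^2$ dissipation, whereas you convert the $\|v\|$ factor into $\sqrt{\Gamma\Psi}$ using the channel condition $\mathcal E^{\bm h}\le\Gamma\Psi$. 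Both variants work; these are bookkeeping choices, not different ideas. Your identification of the key structural points — the cancellation giving dissipation, the exponential smallness of $\mathcal R$ via \eqref{|h'|<}, \eqref{|h'|<exp}, and \eqref{E>}, and the role of the $\Psi^{1/2}$-estimate of $\mathcal L(u^{\bm h})$ in fixing $\Gamma_1$ — matches the paper's argument.
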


\begin{proof}
In all the proof, symbols $C, c, \eta$ denote generic positive constants, independent on $\varepsilon$,
and with $\eta\in(0,1)$.
Let us recall that, if $(\bm{h},w,v)$ is a solution to \eqref{system-w-v-h}, then
\begin{equation*}
	w_t=v-\nabla_{\bm h} u^{\bm h}\cdot\bm h',\qquad
	\tau v_t=-\mathcal{L}(u^{\bm h})-L^{\bm h}w+f_2w^2-g(u^{\bm h}+w,\tau)v.
\end{equation*}
Direct differentiation and the self-adjointness of the operator $L^{\bm{h}}$ give
\begin{equation*}
	\begin{aligned}
	\frac{d}{dt}\Bigl\{\tfrac12\langle w,L^{\bm h}w\rangle\Bigr\}
	& =\langle w_t,L^{\bm h}w\rangle+\tfrac12\langle w,f''(u^{\bm h})\bigl(\nabla_{\bm h}u^{\bm h}\cdot\bm h'\bigr)w\rangle\\
	& =\langle v,L^{\bm h}w\rangle-\langle \nabla_{\bm h} u^{\bm h}\cdot\bm h',L^{\bm h}w\rangle
		+\tfrac12\langle w,f''(u^{\bm h})\bigl(\nabla_{\bm h}u^{\bm h}\cdot\bm h'\bigr)w\rangle\\
	&= \langle v,L^{\bm h}w\rangle-\langle L^{\bm h} \nabla_{\bm h} u^{\bm h}\cdot\bm h',w\rangle
		+\tfrac12\langle w,f''(u^{\bm h})\bigl(\nabla_{\bm h}u^{\bm h}\cdot\bm h'\bigr)w\rangle.
	\end{aligned}
\end{equation*}
Using  Cauchy--Schwarz inequality and the estimates in Proposition \ref{prop-L(u^h)} and  \ref{estimates-u^h_j}, we infer
\begin{equation*}
	\begin{aligned}
	\frac{d}{dt}\Bigl\{\tfrac12\langle w,L^{\bm h}w\rangle\Bigr\}
	&\leq \langle v,L^{\bm h}w\rangle+C\sum_j\Bigl(\|L^{\bm h} u^{\bm h}_j\|
		+\|u^{\bm h}_j\|_{{}_{L^\infty}}\|w\|\Bigr)|\bm{h}'|_{{}_{\infty}}\|w\|\\
	&\leq \langle v,L^{\bm h}w\rangle+C\varepsilon^{-1/2}\bigl(\exp(-A\delta/\varepsilon)
		+\varepsilon^{-1/2}\|w\| \bigr)|\bm{h}'|_{{}_{\infty}}\|w\|.
	\end{aligned}
\end{equation*}
For $(\bm{h},w,v)\in\hat{\mathcal{Z}}_{{}_{\Gamma,\rho}}$, applying \eqref{|h'|<}, \eqref{E>} and using Young inequality, we get 
\begin{equation*}
	\begin{aligned}
	\frac{d}{dt}\Bigl\{\tfrac12\langle w,L^{\bm h}w\rangle\Bigr\}
	&\leq \langle v,L^{\bm h}w\rangle+C\bigl(\exp(-A\delta/\varepsilon)
		+\varepsilon^{-1/2}\|w\| \bigr)\|w\|\|v\|\\
	&\leq \langle v,L^{\bm h}w\rangle+C\exp(-2A\delta/\varepsilon)\bigl( 1 + \varepsilon^{-1}\Gamma\bigr)\|w\|^2+\eta\|v\|^2.
	\end{aligned}
\end{equation*}
For what concerns the second term in the energy $\mathcal{E}^{\bm{h}}$, it holds
\begin{equation*}
	\begin{aligned}
	\frac{d}{dt}\Bigl\{\tfrac12\tau\|v\|^2\Bigr\}& =\langle \tau v_t,v\rangle
		=\langle -\mathcal{L}(u^{\bm h})-L^{\bm h}w+f_2w^2-g(u^{\bm h}+w,\tau)v,v\rangle\\
	&\leq-\langle L^{\bm h}w,v\rangle+\|\mathcal{L}(u^{\bm h})\|\|v\|+C\|w\|_{{}_{L^\infty}}\|w\|\|v\|-c_g\|v\|^2\\
	&\leq -\langle L^{\bm h}w,v\rangle+C\|w\|_{{}_{L^\infty}}^2\|w\|^2-(c_g-\eta)\|v\|^2
		+C\|\mathcal{L}(u^{\bm h})\|^2.
	\end{aligned}
\end{equation*}
Finally, the time derivative of the scalar product $\langle w,\tau v\rangle$ can be bounded by
\begin{equation*}
	\begin{aligned}
	\frac{d}{dt}\langle w,\tau v\rangle 
	&=\langle v-\nabla_{\bm h} u^{\bm h}\cdot\bm h',\tau v\rangle+\langle w,-\mathcal{L}(u^{\bm h})-L^{\bm h}w
		+f_2w^2-g(u^{\bm h}+w,\tau)v\rangle\\
	&\leq -\langle w,L^{\bm h}w\rangle+C(\varepsilon+\|w\|_{{}_{L^\infty}})\|w\|^2
		 +(\tau+\eta\,\varepsilon^{-1})\|v\|^2+C\tau\varepsilon^{-1/2}|\bm h'|_{{}_{L^\infty}}\|v\|\\
	&\hskip9cm +\varepsilon^{-1}\|\mathcal{L}(u^{\bm h})\|^2\\
	&\leq -\langle w,L^{\bm h}w\rangle+C(\varepsilon+\|w\|_{{}_{L^\infty}})\|w\|^2
		+(C+\eta\,\varepsilon^{-1})\|v\|^2+\varepsilon^{-1}\|\mathcal{L}(u^{\bm h})\|^2,
	\end{aligned}
\end{equation*}
where, in particular, the inequalities
\begin{equation*}
	\begin{aligned}
	\langle w,\mathcal{L}(u^{\bm h})\rangle&\leq \tfrac12\varepsilon\|w\|^2+\tfrac12\varepsilon^{-1}\|\mathcal{L}(u^{\bm h})\|^2,\\
	\langle w,g(u^{\bm h}+w,\tau)v\rangle&\leq C\varepsilon\|w\|^2+\eta\,\varepsilon^{-1}\|v\|^2
	\end{aligned}
\end{equation*}
have been used.
Collecting the estimates for the three terms composing $\mathcal{E}^{\bm h}$, we deduce
\begin{equation*}
	\begin{aligned}
	\frac{d\mathcal{E}^{\bm h}}{dt} &\leq -\varepsilon\langle w,L^{\bm h}w\rangle
		-[c_g-C\varepsilon-3\eta]\|v\|^2\\
	&\hskip1.0cm  +C\bigl\{ \exp(-2A\delta/\varepsilon)\bigl( 1 + \varepsilon^{-1}\Gamma\bigr)
		+\varepsilon (\varepsilon+\|w\|_{{}_{L^\infty}})\bigr\}\|w\|^2+(C+1)\|\mathcal{L}(u^{\bm h})\|^2\\
	&\leq -\varepsilon\langle w,L^{\bm h}w\rangle+C\varepsilon\bigl\{
		\Gamma\exp(-c/\varepsilon)+\varepsilon\bigr\}\|w\|^2-\eta c_g\|v\|^2+C\|\mathcal{L}(u^{\bm h})\|^2,
	\end{aligned}
\end{equation*}
for $\varepsilon$ and $\eta$ small.
Thus, from \eqref{eq:nuoval2} and 
\begin{equation}\label{L(u^h)<Psi}
	\|\mathcal{L}(u^{\bm h})\|^2\leq C\varepsilon\,\Psi(\bm h)\leq C\varepsilon\exp(-2A\ell^{\bm h}/\varepsilon),
\end{equation}
it follows that
\begin{equation*}
	\frac{d\mathcal{E}^{\bm h}}{dt} \leq -\varepsilon
	\bigl\{1-C\bigl(\Gamma \exp(-c/\varepsilon)+\varepsilon\bigr)\bigr\}\langle w,L^{\bm h}w\rangle
		-\eta\,c_g\|v\|^2+C\varepsilon\Psi.
\end{equation*} 
Hence, for $\varepsilon\in(0,\varepsilon_0)$, with $\varepsilon_0$ small (and dependent on $\Gamma$), we deduce the bound
\begin{equation*}
	1- C\bigl(\Gamma\exp(-c/\varepsilon)+\varepsilon\bigr)
	\geq \eta.
\end{equation*}
Substituting, we infer
\begin{equation*}
	\begin{aligned}
	\frac{d\mathcal{E}^{\bm h}}{dt} &\leq -\eta\,\varepsilon\langle w,L^{\bm h}w\rangle
	-\eta\,c_g\|v\|^2+C\varepsilon\Psi\\
	&\leq -\eta\,\varepsilon\mathcal{E}^{\bm h}
		-\tfrac12\eta\,\varepsilon\langle w,L^{\bm h}w\rangle+\eta\,\varepsilon^2\tau\langle w,v\rangle
		-\eta\bigl(c_g-\tfrac12\varepsilon\tau\bigr)\|v\|^2+C\varepsilon\Psi\\
	&\leq -\eta\,\varepsilon\mathcal{E}^{\bm h}
		-\tfrac12\eta\,\varepsilon\bigl(1-C\varepsilon\tau\bigr)\langle w,L^{\bm h}w\rangle
		-\eta\bigl(c_g-C\varepsilon\tau\bigr)\|v\|^2+C\varepsilon\Psi,
	\end{aligned}
\end{equation*} 
again from \eqref{eq:nuoval2}.
Finally, for $\varepsilon_0$ sufficiently small, we obtain
\begin{equation}\label{eq:E'}
	\frac{d\mathcal{E}^{\bm h}}{dt} \leq -\eta\,\varepsilon\mathcal{E}^{\bm h}
		-\eta\,c_g\|v\|^2+C\varepsilon\Psi.
\end{equation} 
Direct differentiation gives
\begin{equation*}
	\frac{d\Psi}{dt}=2\sum_{j=1}^N\langle\mathcal{L}(u^{\bm h}),k^{\bm h}_j\rangle
		\Bigl\{\langle\mathcal{L}(u^{\bm h}),\nabla_{\bm h}k_j^{\bm h}\cdot\bm h'\rangle
		-\langle L^{\bm h}\nabla_{{_{\bm h}}}u^{\bm h}\cdot\bm h',k^{\bm h}_j\rangle\Bigr\}.
\end{equation*}
Using the estimates provided by Proposition \ref{estimates-u^h_j} and by \eqref{|dhL(u^h)|}, \eqref{|h'|<}, we deduce
\begin{equation*}
	\begin{aligned}
	\bigl|\langle\mathcal{L}(u^{\bm h}),\nabla_{\bm h}k^{\bm h}_j\cdot\bm h'\rangle\bigr|
	&\leq|\bm h'|_{{}_\infty}\|\mathcal{L}(u^{\bm h})\|\sum_{i=1}^N\|k^{\bm h}_{ji}\|
		\leq C\varepsilon^{-1}\|\mathcal{L}(u^{\bm h})\|\|v\|,\\
	\bigl|\langle L^{\bm h}\nabla_{\bm h}u^{\bm h}\cdot\bm h',k^{\bm h}_j\rangle\bigr|
	&\leq|\bm h'|_{{}_\infty}\|k^{\bm h}_j\|\sum_{i=1}^N\|L^{\bm h}u^{\bm h}_i\|
		\leq C\exp(-c/\varepsilon)\|v\|,
	\end{aligned}
\end{equation*}
thus, observing that $|\langle\mathcal{L}(u^{\bm h}),k^{\bm h}_j\rangle|\leq C\varepsilon^{-1/2} \|\mathcal{L}(u^{\bm h})\|$,
we infer the bound
\begin{equation*}
	\left|\frac{d\Psi}{dt}\right|
	\leq C\varepsilon^{-1/2} \left\{\varepsilon^{-1}\|\mathcal{L}(u^{\bm h})\|+\exp(-c/\varepsilon)\right\}
		\|\mathcal{L}(u^{\bm h})\|\|v\|.
\end{equation*}
Using the inequality \eqref{L(u^h)<Psi}, we obtain
\begin{equation*}
	\begin{aligned}
	\left|\Gamma\frac{d\Psi}{dt}\right|
	&\leq C\,\Gamma\,\varepsilon^{-1/2}\bigl\{\Psi^{1/2}+\exp(-c/\varepsilon)\bigr\}\|v\|\Psi^{1/2}\\
	&\leq \eta\|v\|^2+C\,\Gamma^2\varepsilon^{-1}\bigl\{\Psi^{1/2}+\exp(-c/\varepsilon)\bigr\}^2\Psi.
	\end{aligned}
\end{equation*}
Hence, observing that $\Psi\leq C\exp\bigl(-c/\varepsilon\bigr)$, we end up with
\begin{equation}\label{eq:Psi'}
	\left|\Gamma\frac{d\Psi}{dt}\right|\leq \eta\|v\|^2+C\,\Gamma^2\exp(-c/\varepsilon)\Psi.
\end{equation}
Combining \eqref{eq:E'} and \eqref{eq:Psi'}, we obtain that if $(\bm h,w,v)\in\hat{\mathcal{Z}}_{{}_{\Gamma,\rho}}$ is a solution of \eqref{system-w-v-h}, then
\begin{equation*}
	\frac d{dt}\bigl\{\mathcal{E}^{\bm{h}}[w,v]-\Gamma\Psi(\bm h)\bigr\} \leq
	-\eta\,\varepsilon\mathcal{E}^{\bm h}+C\bigl(\varepsilon+\Gamma^2\exp(-c/\varepsilon)\bigr)\Psi,
\end{equation*}
for some $\eta\in(0,1)$.
Therefore the estimate \eqref{E-GPsi^2} follows from 
\begin{equation*}
C\exp(-c/\varepsilon)\Gamma^2-\eta\,\varepsilon\Gamma +C\varepsilon\leq 0,
\end{equation*}
 and the latter is verified for $\Gamma\in [\Gamma_1,\Gamma_2]$, provided $\varepsilon\in(0,\varepsilon_0)$ with $\varepsilon_0$ sufficiently small so that $\eta^2\varepsilon - 4C^2\exp(-c/\varepsilon) > 0$.
%
%
%
%
%
%
%
%
%
%
%
%
%
\end{proof}

Now, we have all the tools needed to prove Theorem \ref{main}.

\begin{proof}[Proof of Theorem \ref{main}]
Let $(u_0,v_0)\in\,\stackrel{\circ}{\mathcal{Z}}_{{}_{\Gamma,\rho}}$ and 
let $(u,v)$ be the solution of \eqref{system-u-v}-\eqref{Neumann}-\eqref{initial}. 
Assume that $(u,v)\in\mathcal{Z}_{{}_{\Gamma,\rho}}$ for $t\in[0,T_\varepsilon]$, where $T_\varepsilon$ is maximal.
Then, $u=u^{\bm h}+w$ and $(\bm h,w,v)\in\hat{\mathcal{Z}}_{{}_{\Gamma,\rho}}$ solves the system \eqref{system-w-v-h} for $t\in[0,T_\varepsilon]$. 
Let us apply Proposition \ref{prop:d/dtE}; from \eqref{E-GPsi^2}, it follows that
\begin{equation*}
	\frac d{dt}\Bigl\{\exp(\eta\,\varepsilon t)(\mathcal{E}^{\bm{h}}[w,v]-\Gamma\Psi(\bm h))\Bigr\}\leq0,
	\quad \qquad t\in[0,T_\varepsilon]
\end{equation*}
and so,
\begin{equation*}
	\exp(\eta\,\varepsilon t)\{\mathcal{E}^{\bm{h}}[w,v]-\Gamma\Psi(\bm h)\}(t)\leq\{\mathcal{E}^{\bm{h}}[w,v]-\Gamma\Psi(\bm h)\}(0)<0,
	\qquad \quad t\in[0,T_\varepsilon].
\end{equation*}
Therefore, the solution $(u,v)$ remains in the channel $\mathcal{Z}_{{}_{\Gamma,\rho}}$ while $\bm h\in\Omega_\rho$ 
 and if $T_\varepsilon<+\infty$ is maximal, then $\bm h(T_\varepsilon)\in\partial\Omega_\rho$, that is
\begin{equation}\label{hfrontiera}
	h_j(T_\varepsilon)-h_{j-1}(T_\varepsilon)=\varepsilon/\rho \qquad \textrm{for some } j.
\end{equation}
For Proposition \ref{prop:E>}, in the channel the solution satisfies \eqref{umenouh} and \eqref{|h'|<exp-intro}. 
In particular, the transition points move with exponentially small velocity. 
This implies that $(u,v)$ remains in the channel for an exponentially long time. 
Indeed, from \eqref{|h'|<exp-intro} it follows that for all $t\in[0,T_\varepsilon]$, one has
\begin{equation}\label{dhmax}
	|h_j(t)-h_j(0)|\leq C\left(\varepsilon/\tau\right)^{1/2}\exp(-A\ell^{\bm h(t)}/\varepsilon)t \qquad \textrm{for any } j=1,\dots,N,
\end{equation} 
where $\ell^{\bm h(t)}$ is the minimum distance between layers at the time $t$.
Combining \eqref{hfrontiera} and \eqref{dhmax},  we obtain 
\begin{equation*}
	\varepsilon/\rho\geq \ell^{\bm h(0)}-2C(\varepsilon/\tau)^{1/2}\exp(-A/\rho)T_\varepsilon.
\end{equation*}
Hence, using \eqref{triangle} we have
\begin{equation*}
	T_\varepsilon\geq C\bigl(\ell^{\bm h(0)}-\varepsilon/\rho\bigr)(\varepsilon/\tau)^{-1/2}\exp(A/\rho)\geq 
	C\bigl(\ell^{\bm h(0)}-\varepsilon/\rho\bigr)(\varepsilon/\tau)^{-1/2}\exp(A\delta/\varepsilon),
\end{equation*}
and the proof is complete.
\end{proof}

\section{Reduced dynamics on the base manifold}\label{layer}
In the previous section, we derived the equation \eqref{h'-compact} for the motion of the transition points and,
by studying the ODE-PDE coupled system \eqref{system-w-v-h}, we obtained the estimate \eqref{|h'|<exp}
for the velocity of the transitions. 
In this section, we derive an ordinary differential equation approximating the equation for $\bm h$ to obtain further
information on the motion of the transition points 
and analyze the differences with the parabolic case \eqref{AllenCahn}. 

\subsection{Derivation of the reduced system}
Since  $w$ is very small, we use the approximation $w=0$ in \eqref{h'} and then
\begin{equation}\label{h'-w=0}
	\sum_{i=1}^N\langle u^{\bm h}_i,k^{\bm h}_j\rangle\bm h'_i=\langle v,k^{\bm h}_j\rangle, \qquad j=1,\dots,N.
\end{equation}
In order to eliminate $v$, let us differentiate and multiply by $\tau$ equation \eqref{h'-w=0}. 
We have
\begin{align*}
	\tau\sum_{i,l=1}^N & \bigl(\langle u^{\bm h}_{il},k^{\bm h}_j\rangle+\langle u^{\bm h}_i,k^{\bm h}_{jl}\rangle\bigr)\bm h'_l\bm h'_i
	+\tau\sum_{i=1}^N\langle u^{\bm h}_i,k^{\bm h}_j\rangle\bm h''_i=\\
	& -\langle\mathcal{L}(u^{\bm h}),k^{\bm h}_j\rangle-\langle g(u^{\bm h},\tau)v,k^{\bm h}_j\rangle+
	\tau\sum_{l=1}^N\langle v,k^{\bm h}_{jl}\rangle\bm h'_l, \qquad j=1,\dots,N.
\end{align*}
Using the approximation $v=\nabla_{\bm h}u^{\bm h}\cdot\bm h$, we obtain
\begin{align*}
	\tau \sum_{i,l=1}^N & \bigl(\langle u^{\bm h}_{il},k^{\bm h}_j\rangle+\langle u^{\bm h}_i,k^{\bm h}_{jl}\rangle\bigr)\bm h'_l\bm h'_i
	+\tau\sum_{i=1}^N\langle u^{\bm h}_i,k^{\bm h}_j\rangle\bm h''_i=\\
	& -\langle\mathcal{L}(u^{\bm h}),k^{\bm h}_j\rangle-\sum_{i=1}^N\langle g(u^{\bm h},\tau)u^{\bm h}_i,k^{\bm h}_j\rangle\bm h'_i
	+\tau\sum_{i,l=1}^N\langle u^{\bm h}_i,k^{\bm h}_{jl}\rangle\bm h'_i\bm h'_l, \qquad j=1,\dots,N.
\end{align*}
Let us denote by $\nabla^2_{\bm h}u^{\bm h}$ the Hessian of $u^{\bm h}$ with respect to $\bm h$ and 
by $q(\bm \xi):=\displaystyle\sum_{i,l=1}^N u^{\bm h}_{il}\bm \xi_l\bm \xi_i$ the quadratic form associated to $\nabla^2_{\bm h}u^{\bm h}$. 
Simplifying, we get
\begin{equation}\label{h-eq}
	\tau\sum_{i=1}^N\langle u^{\bm h}_i,k^{\bm h}_j\rangle\bm h''_i+\sum_{i=1}^N\langle g(u^{\bm h},\tau)u^{\bm h}_i,k^{\bm h}_j\rangle\bm h'_i
	+\tau\langle q(\bm h'),k^{\bm h}_j\rangle = -\langle\mathcal{L}(u^{\bm h}),k^{\bm h}_j\rangle,
\end{equation}
for $ j=1,\dots,N$. 
If $u^{\bm h(t)}(x)$ is a solution of the hyperbolic Allen--Cahn equation \eqref{hyp-al-ca}, $\bm h(t)$ satisfies \eqref{h-eq}. 
Observe that with respect  to the parabolic case, besides the coefficient  $g(u^{\bm h},\tau)$ which is in general different from 1, there are two new terms: the term involving $\bm h''_i$ and the one involving the quadratic
form associated to the Hessian of $u^{\bm h}$. 
By inverting the matrix $D_{ij}(\bm h)=\langle u^{\bm h}_j,k^{\bm h}_i\rangle$,
introduced in Section \ref{motion}, we rewrite \eqref{h-eq} as follows:
\begin{equation}\label{eq:vecth}
	\tau\bm h''+\mathcal{G}(\bm h)\bm h' +\tau\mathcal{Q}(\bm h,\bm h')=\mathcal{P}(\bm h),
\end{equation}	
where 
\begin{equation*}
	\mathcal{G}_{ij}(\bm h):=\sum_{l=1}^ND^{-1}_{il}(\bm h)\langle g(u^{\bm h},\tau)u^{\bm h}_j,k^{\bm h}_l\rangle, \qquad 
	\mathcal{Q}_i(\bm h,\bm h'):=\sum_{j=1}^ND^{-1}_{ij}(\bm h)\langle q(\bm h'),k^{\bm h}_j\rangle,
\end{equation*}
and
\begin{equation*}
	\mathcal{P}_i(\bm h):=-\sum_{j=1}^N D^{-1}_{ij}(\bm h)\langle\mathcal{L}(u^{\bm h}),k^{\bm h}_j\rangle.
\end{equation*}
Now we want to identify the leading terms in \eqref{eq:vecth}, having in mind the estimates for $k_j^{\bm h}$, $u^{\bm h}$ and their derivatives;
namely we shall rewrite $\mathcal{G}$, $\mathcal{Q}$ and $\mathcal{P}$ by neglecting the exponentially small remainders in the asymptotic expansion for $\varepsilon\to 0$.

As proven by Carr and Pego \cite[Corollary 3.6]{Carr-Pego}, 
defining
\begin{equation*}
	D_\infty:=\int_{-1}^1\sqrt{2F(s)}\,ds \quad \textrm{and} \quad 
	\mathcal{P}^*_j(\bm h):=-\varepsilon D_\infty^{-1}\langle\mathcal{L}(u^{\bm h}),k^{\bm h}_j\rangle=\varepsilon D_\infty^{-1}(\alpha^{j+1/2}-\alpha^{j-1/2}),
\end{equation*}
there exists $C>0$ such that if $\rho$ is sufficiently small and $\bm h\in\Omega_\rho$, we have
\begin{equation}\label{eq:P-P*}
	|\mathcal{P}(\bm h)-\mathcal{P}^*(\bm h)|_{{}_{\infty}}\leq C|\mathcal{P}^*(\bm h)|_{{}_{\infty}}\exp(-A\ell^{\bm h}/2\varepsilon),
\end{equation}
where $|\mathcal{P}(\bm h)|_{{}_{\infty}}=\max|\mathcal{P}_j(\bm h)|$. 
Since $\alpha^{j-1/2}=F(\phi^j)-\frac12\varepsilon^2\bigl(\phi^j_x\bigr)^2=F(\phi^j(h_{j-1/2}))$, 
it follows that $\mathcal{P}_j(\bm h)$ depends essentially on the differences between values of the potential $F(\phi)$. 

Similar result holds for   the matrix $\mathcal{G}(\bm h)$, namely for the scalar products  
$\langle g(u^{\bm h},\tau)u^{\bm h}_i,k^{\bm h}_j\rangle$, thus generalizing the aforementioned result to the case $g\not\equiv 1$.
To this end, we recall the following result  (see \cite[Lemmas 7.8-7.9-8.1]{Carr-Pego}).

\begin{lem}[Carr--Pego \cite{Carr-Pego}]\label{lem:u^h_j}
The interval $[h_{j-1}-\varepsilon,h_{j+1}+\varepsilon]$ contains the support of $u^{\bm h}_j$ and
\begin{equation*}
	u^{\bm h}_j=
		\begin{cases}
			\chi^{j-1}\nu^j \qquad \quad & x\in I_{j-1},\\
			(1-\chi^j)(-\phi^j_x+\nu^j)+\chi^j(-\phi^{j+1}_x-\nu^{j+1})\\
			+\chi^j_x(\phi^j-\phi^{j+1}) & x\in I_j,\\
			-(1-\chi^{j+1})\nu^{j+1} & x \in I_{j+1},
		\end{cases}
\end{equation*}
where $\nu^j(x):=\nu(x-h_{j-1/2},h_j-h_{j-1}, (-1)^j)$ for $x\in I_j$ and there exists $r_0>0$ such that, for $0<r<r_0$,  
\begin{equation}\label{eq:nu}
	|\nu(x,\ell,\pm1)|\leq C\varepsilon^{-1}\beta_\pm(r), \qquad \textrm{for } x\in\left[-\tfrac\ell2-\varepsilon,\tfrac\ell2+\varepsilon\right].
\end{equation}
\end{lem}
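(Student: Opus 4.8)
The plan is to prove both assertions by differentiating the explicit definition \eqref{u^h(x)} and controlling each resulting piece.

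For the formula, observe first that $h_j$ enters $u^{\bm h}$ only on $I_{j-1}\cup I_j\cup I_{j+1}$, and there only through the cutoff $\chi^j$ (with $\partial_{h_j}\chi^j=-\chi^j_x$), through the bump $\phi^j$ — via its centre $h_{j-1/2}$, which moves at rate $\tfrac12$, and its width $h_j-h_{j-1}$, rate $+1$ — and through $\phi^{j+1}$ — via its centre $h_{j+1/2}$, rate $\tfrac12$, and width $h_{j+1}-h_j$, rate $-1$. Differentiating $u^{\bm h}=(1-\chi^k)\phi^k+\chi^k\phi^{k+1}$ on $I_k$ for $k=j-1,j,j+1$ produces on $I_j$ the cutoff term $\chi^j_x(\phi^j-\phi^{j+1})$ together with $(1-\chi^j)\partial_{h_j}\phi^j+\chi^j\partial_{h_j}\phi^{j+1}$, and on $I_{j-1},I_{j+1}$ just $\chi^{j-1}\partial_{h_j}\phi^j$ and $(1-\chi^{j+1})\partial_{h_j}\phi^{j+1}$. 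One then splits $\partial_{h_j}\phi^j=-\phi^j_x+\nu^j$ and $\partial_{h_j}\phi^{j+1}=-\phi^{j+1}_x-\nu^{j+1}$: the translational term $-\phi^j_x$ (resp. $-\phi^{j+1}_x$) accounts for the displacement of the endpoint of that bump which actually moves with $h_j$ (the right endpoint $h_j$ of $\phi^j$, the left endpoint $h_j$ of $\phi^{j+1}$), while $\nu^j,\nu^{j+1}$ record only the change of width. Substituting gives the three stated lines; the two descriptions of $\nu^j$ coming from $I_{j-1}$ and $I_j$ agree across $x=h_{j-1/2}$ because $\phi^j_x$ vanishes at the centre of the bump (where $\phi^j$ attains $M_\pm$), and similarly at $h_{j+1/2}$. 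The support claim is immediate: $\partial_{h_j}\phi^j$, hence $\phi^j_x$ and $\nu^j$ on the relevant range, is supported in $[h_{j-1},h_j]$ (outside this interval $\phi^j$ is a constant, independent of $h_j$), $\partial_{h_j}\phi^{j+1}$ in $[h_j,h_{j+1}]$, and the cutoffs $\chi^{j-1},\chi^j,\chi^j_x,\chi^{j+1}$ enlarge these ranges by at most $\varepsilon$, so $\operatorname{supp}u^{\bm h}_j\subseteq[h_{j-1}-\varepsilon,h_{j+1}+\varepsilon]$.

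For the bound \eqref{eq:nu}, rescale: since $\phi(\cdot,\ell,\pm1)=\varphi(\cdot/\varepsilon,\ell/\varepsilon,\pm1)$ with $\varphi$ as in \eqref{eq:z}, one has $\nu(\cdot,\ell,\pm1)=\varepsilon^{-1}\tilde\nu(\cdot/\varepsilon,\ell/\varepsilon,\pm1)$ with $\tilde\nu$ the corresponding quantity built from $\varphi(\cdot,L,\pm1)$, $L=1/r$, so it suffices to prove $|\tilde\nu(\cdot,L,\pm1)|\le C\beta_\pm(r)=C\bigl(1-M_\pm(L)\bigr)$. The point is that for $L$ large $\varphi(\cdot,L,\pm1)$ is, near each of its endpoints $\pm L/2$, exponentially close to a fixed limiting profile (the orbit of $-\psi''+f(\psi)=0$ issuing from $0$ and limiting onto the saddle $\pm1$), and equal to $\pm1$ up to an $O(\beta_\pm)$ error in between; hence its derivative in $L$ is $O(\beta_\pm)$ throughout $[-L/2,L/2]$. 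Quantitatively, $\tilde\nu$ solves a linear second order ODE — the linearization of the profile equation with respect to $L$, obtained after rescaling the moving interval $[-L/2,L/2]$ to a fixed one, or equivalently by differentiating the first integral \eqref{eq:varfi} together with the length relation \eqref{eq:L-per} — whose data at the centre is controlled by $\partial_L M_\pm(L)$, which is $O(\beta_\pm(r))$ by \eqref{eq:L-per} and Proposition \ref{prop:alfa,beta}. Since $F''(\pm1)>0$, the linearized potential $f'(\varphi)$ is bounded below by a positive constant wherever $\varphi$ is close to $\pm1$, i.e. off $O(1)$-wide layers at the endpoints; a comparison argument on this bulk, combined with the explicit near-constant behaviour of $\varphi$ in the endpoint layers, then propagates the bound $O(\beta_\pm)$ from the centre across the whole interval. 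Undoing the rescaling gives \eqref{eq:nu}. This is exactly the content of Lemmas 7.8, 7.9 and 8.1 of \cite{Carr-Pego} (see also \cite{Carr-Pego2}), to which I refer for the routine but lengthy details.

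The main obstacle is the bookkeeping in the first step: there are many channels through which $h_j$ enters $u^{\bm h}$ — one cutoff, two bump centres, two bump widths, and the moving partition points $h_{k\pm1/2}$ — and they must be organised so as to produce precisely the clean splitting into translational parts $-\phi^j_x,-\phi^{j+1}_x$ and single stretching remainders $\pm\nu^j,\nu^{j+1}$, including checking that the pieces defined on adjacent subintervals glue. For the quantitative part, the delicate point is controlling the linearized profile equation uniformly up to the endpoints despite the $O(1)$ (in the rescaled variable) boundary layers there; this is precisely where the hypothesis $F''(\pm1)>0$ in \eqref{hypf} is essential, and where the exponential-in-$\ell/\varepsilon$ smallness of $\nu$ originates.
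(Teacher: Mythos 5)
Your proposal is correct and follows essentially the same approach as the paper, which (like you) derives the piecewise formula by chain-rule bookkeeping on the definition \eqref{u^h(x)}, records the decomposition $\phi_\ell=\nu-\tfrac12(\mathrm{sgn}\,x)\phi_x$ with $\nu$ even and solving $\varepsilon^2\nu_{xx}=f'(\phi)\nu$, and then defers to Carr--Pego (Lemmas 7.8, 7.9, 8.1) for the quantitative bound \eqref{eq:nu}. If anything you provide more of the intermediate detail than the paper itself, which gives only the $\phi_\ell$-splitting and the citation.
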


In order to compute $u^{\bm h}_j = \partial_{h_j} u^{\bm h}$, one needs to obtain an expression for  $\phi_\ell$.
Since $\phi$ solution of \eqref{fi(x,l)} depends on $\ell$ through its boundary value, the latter can be obtained by   differentiating the integrated version of that equation with respect to $\ell$, that is, the $\varepsilon$--rescaled version of \eqref{eq:varfi}. 
Finally, for $x\in[-\ell,\ell]$, we end up with 
\begin{equation*}
	 \phi_\ell(x,\ell,\pm1)=  \nu(x,\ell,\pm1) - \tfrac12(\textrm{sgn } x)\phi_x(x,\ell,\pm1),
\end{equation*}
and $\nu$ is an even function of $x$ satisfying
\begin{equation*}
	\varepsilon^2\nu_{xx}=f'(\phi)\nu, \qquad \quad \textrm{for } \, x\in[0,\ell];
\end{equation*}
 see \cite[Lemma 7.8]{Carr-Pego} for details.
 
From \eqref{eq:nu}, it follows that 
\begin{equation*}
	|\nu^j(x)|\leq C\varepsilon^{-1}\beta^{j-1/2}, \qquad \textrm{for }x\in[h_{j-1}-\varepsilon,h_j+\varepsilon],
\end{equation*}
and so, for $x\in[h_{j-1}-\varepsilon,h_{j+1}+\varepsilon]$,
\begin{equation}\label{eq:nuj}
	|(1-\chi^j)\nu^j|+|\chi^j\nu^{j+1}|\leq C\varepsilon^{-1}\max\{\beta^{j-1/2},\beta^{j+1/2}\} \leq C\varepsilon^{-1}\exp(-A\ell^{\bm h}/2\varepsilon).
\end{equation}

Note that, for $x\in I_j$, one has
\begin{equation}\label{eq:perujj}
u^{\bm h}_x=(1-\chi^j)\phi^j_x+\chi^j\phi^{j+1}_x+\chi^j_x(\phi^{j+1}-\phi^j)\ \hbox{and}\
	u^{\bm h}_j=-u^{\bm h}_x+(1-\chi^j)\nu^j-\chi^j\nu^{j+1}.
\end{equation}

Thanks to Lemma \ref{lem:u^h_j}, we can prove the following proposition.
\vspace{0.2cm}
\begin{prop}\label{prop:scal-prod}
Let $F\in C^3$ be such that \eqref{hypf} holds and $g\in C^1(\mathbb R)$. Set
\begin{equation*}	
	 C_{F,g}:=\int_{-1}^1\sqrt{2F(s)}g(s)ds.
\end{equation*}
If $\rho$ is sufficiently small and $\bm h\in\Omega_\rho$, then there exists $C>0$ such that, for $j=1,\dots,N$,
\begin{align}
	&\bigl|\langle g(u^{\bm h})u_j^{\bm h},k^{\bm h}_j\rangle-\varepsilon^{-1}C_{F,g}\bigr|
		\leq C\varepsilon^{-1}\max\{\beta^{j-1/2},\beta^{j+1/2}\}
		\leq C\varepsilon^{-1}\exp(-A\ell^{\bm h}/2\varepsilon), \label{g(u^h)u_j,k_j}\\
	&\bigl|\langle g(u^{\bm h})u_j^{\bm h},k^{\bm h}_{j+1}\rangle\bigr|+\bigl|\langle g(u^{\bm h})u_{j+1}^{\bm h},k^{\bm h}_j\rangle\bigr|
		\leq C\varepsilon^{-1}\beta^{j+1/2}
		\leq C\varepsilon^{-1}\exp(-A\ell^{\bm h}/2\varepsilon),\label{g(u^h)u_j-j+1}\\ 
	&\langle g(u^{\bm h})u_j^{\bm h},k^{\bm h}_i\rangle=0 \qquad\mbox{ if } |j-i|>1. \label{g(u^h)u_j,k_i}
\end{align}
\end{prop}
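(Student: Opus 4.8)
The plan is to reduce each scalar product to an integral over the intersection of the supports of $u_j^{\bm h}$ and $k_i^{\bm h}$ and then to adapt the estimates of Carr--Pego behind \cite[Corollary 3.6]{Carr-Pego}, carrying the weight $g(u^{\bm h})$ through the computation. Recall that $k_i^{\bm h}$ is supported in $I_i=[h_{i-1/2},h_{i+1/2}]$ while, by Lemma \ref{lem:u^h_j}, $u_j^{\bm h}$ is supported in $[h_{j-1}-\varepsilon,h_{j+1}+\varepsilon]$; for $|i-j|>1$ these two sets are disjoint under \eqref{triangle} (consecutive midpoints $h_{\cdot+1/2}$ are separated by more than $\varepsilon$ once $\varepsilon_0$ is small, so e.g. $h_{i-1/2}>h_{j+1}+\varepsilon$ when $i\ge j+2$), which gives \eqref{g(u^h)u_j,k_i} at once. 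Throughout I write $M_j:=\phi^j(h_{j-1/2})$ for the extremal (near-$\pm1$) value of $\phi^j$.

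For the almost-diagonal case $i=j+1$ (the case $i=j-1$ is identical after relabelling), Lemma \ref{lem:u^h_j} gives $u_j^{\bm h}=-(1-\chi^{j+1})\nu^{j+1}$ on $I_{j+1}$, which contains the whole overlap; hence by \eqref{eq:nu} one has $\|u_j^{\bm h}\|_{L^\infty(I_{j+1})}\le C\varepsilon^{-1}\beta^{j+1/2}$. On the other hand $\|k_{j+1}^{\bm h}\|_{L^1}\le\int_{I_{j+1}}|u_x^{\bm h}|=|M_{j+1}|+|M_{j+2}|\le2$, since $u^{\bm h}$ is monotone on $I_{j+1}$. As $g(u^{\bm h})$ is bounded, this yields $|\langle g(u^{\bm h})u_j^{\bm h},k_{j+1}^{\bm h}\rangle|\le C\varepsilon^{-1}\beta^{j+1/2}$; the product $\langle g(u^{\bm h})u_{j+1}^{\bm h},k_j^{\bm h}\rangle$ is bounded the same way, now using $u_{j+1}^{\bm h}=\chi^{j}\nu^{j+1}$ on $I_j$ from Lemma \ref{lem:u^h_j}. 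This proves \eqref{g(u^h)u_j-j+1} up to its last inequality.

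For the diagonal term the product equals $\int_{I_j}g(u^{\bm h})u_j^{\bm h}k_j^{\bm h}\,dx$, and on $I_j$ we use \eqref{eq:perujj}: $k_j^{\bm h}=-\gamma^j u_x^{\bm h}$ and $u_j^{\bm h}=-u_x^{\bm h}+R_j$ with $R_j:=(1-\chi^j)\nu^j-\chi^j\nu^{j+1}$. Expanding,
\begin{equation*}
\langle g(u^{\bm h})u_j^{\bm h},k_j^{\bm h}\rangle
=\int_{I_j}g(u^{\bm h})\,\gamma^j\bigl(u_x^{\bm h}\bigr)^2\,dx-\int_{I_j}g(u^{\bm h})\,\gamma^jR_j\,u_x^{\bm h}\,dx .
\end{equation*}
The last integral is $O\bigl(\varepsilon^{-1}\max\{\beta^{j-1/2},\beta^{j+1/2}\}\bigr)$ by \eqref{eq:nuj} and $\int_{I_j}|u_x^{\bm h}|\le|M_j|+|M_{j+1}|\le2$. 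In the first integral one may replace $\gamma^j$ by $1$: the correction is supported within $2\varepsilon$ of $\partial I_j$, where $u_x^{\bm h}$ equals $\phi^j_x$ (resp.\ $\phi^{j+1}_x$) near the centre of the corresponding orbit of \eqref{eq:z} and hence is of size $O(\varepsilon^{-1}\beta^{j-1/2})$ (resp.\ $O(\varepsilon^{-1}\beta^{j+1/2})$), so the correction is again $O\bigl(\varepsilon^{-1}\max\{\beta^{j-1/2},\beta^{j+1/2}\}\bigr)$. It then remains to evaluate $\int_{I_j}g(u^{\bm h})(u_x^{\bm h})^2\,dx$: on each of $[h_{j-1/2},h_j]$ and $[h_j,h_{j+1/2}]$ the function $u^{\bm h}$ is strictly monotone (coinciding with $\phi^j$, resp.\ $\phi^{j+1}$, off the $\varepsilon$-neighbourhood of $h_j$, where the modification is controlled by \eqref{fi^j-fi^j+1}); changing variables $s=u^{\bm h}(x)$ and using the rescaled first integral \eqref{eq:varfi}, $\varepsilon^2(u_x^{\bm h})^2=2\bigl(F(u^{\bm h})-F(M)\bigr)$ (exact off the blending region, with an $O(\varepsilon^{-1}\exp(-c/\varepsilon))$ error on it), gives $\varepsilon\int g(u^{\bm h})(u_x^{\bm h})^2\,dx=\int g(s)\sqrt{2(F(s)-F(M))}\,ds$ over the relevant subinterval of $(-1,1)$, with endpoint $M\in\{M_j,M_{j+1}\}$. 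Since $|M|=1-\beta$ and $F(M)=\alpha=O(\beta^2)$ by Proposition \ref{prop:alfa,beta}, the right-hand side differs by $O(\beta)$ from $\int g(s)\sqrt{2F(s)}\,ds$ taken over $[-1,0]$ or $[0,1]$; summing the two halves produces exactly $C_{F,g}=\int_{-1}^1\sqrt{2F(s)}\,g(s)\,ds$, hence \eqref{g(u^h)u_j,k_j}. Finally $\beta^{j\pm1/2}\le C\exp(-A\ell^{\bm h}/2\varepsilon)$ by Proposition \ref{prop:alfa,beta} together with $r_{j\pm1/2}\le\varepsilon/\ell^{\bm h}$, which turns the right-hand inequalities in \eqref{g(u^h)u_j,k_j}--\eqref{g(u^h)u_j-j+1} into the stated form.

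The main obstacle is this last asymptotic step: one must check that the change of variables $s=u^{\bm h}(x)$ combined with Proposition \ref{prop:alfa,beta} produces precisely the constant $C_{F,g}$, and that every discarded contribution — $\gamma^j$ versus $1$, the blending of $\phi^j$ and $\phi^{j+1}$ near $h_j$, and the $\nu$-remainder $R_j$ — is genuinely of order $\varepsilon^{-1}\max\{\beta^{j-1/2},\beta^{j+1/2}\}$. This is the weighted analogue of the unweighted computation ($g\equiv1$, for which $C_{F,g}=D_\infty$) carried out in \cite{Carr-Pego}, the only new feature being that $g(u^{\bm h})$ becomes the factor $g(s)$ once the substitution $s=u^{\bm h}(x)$ is performed.
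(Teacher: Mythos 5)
Your proposal is correct and follows essentially the same strategy as the paper: support disjointness handles $|i-j|>1$; for $|i-j|=1$ you combine the $L^\infty$ smallness of $u^{\bm h}_j$ off its own cell (from Lemma \ref{lem:u^h_j} and \eqref{eq:nu}) with the uniform $L^1$ bound on $k^{\bm h}_i$; and for the diagonal term you isolate a main integral $\int_{I_j} g(u^{\bm h})(u^{\bm h}_x)^2$, evaluate it by the substitution $s=u^{\bm h}(x)$ together with the first integral $\varepsilon^2(u^{\bm h}_x)^2=2(F(u^{\bm h})-F(M))$, and control the remainders through \eqref{eq:nuj}, \eqref{fi^j-fi^j+1} and Proposition \ref{prop:alfa,beta}. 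The only difference is bookkeeping: you write $u^{\bm h}_j=-u^{\bm h}_x+R_j$ and $k^{\bm h}_j=-\gamma^j u^{\bm h}_x$ directly, while the paper uses the splitting $u^{\bm h}_j=y_1+y_2$, $k^{\bm h}_j=y_1+y_3$ with $y_1=-(1-\chi^j)\phi^j_x-\chi^j\phi^{j+1}_x$ and therefore evaluates $\int g(\phi^j)(\phi^j_x)^2$ and $\int g(\phi^{j+1})(\phi^{j+1}_x)^2$ separately rather than $\int g(u^{\bm h})(u^{\bm h}_x)^2$; the error terms land in the same places and are of the same order, so both routes lead to the stated estimate.
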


\begin{proof}
Firstly, we recall that $k^{\bm h}_j$ is supported in $I_j$.
Since the support of $u^{\bm h}_j$ is contained in $[h_{j-1}-\varepsilon,h_{j+1}+\varepsilon]$, we have \eqref{g(u^h)u_j,k_i}. 
From Lemma \ref{lem:u^h_j}, it follows that $|u^{\bm h}_{j+1}(x)|\leq C\varepsilon^{-1}\beta^{j+1/2}$, for $x\in I_j$ 
and $|u^{\bm h}_j(x)|\leq C\varepsilon^{-1}\beta^{j+1/2}$ for $x\in I_{j+1}$. 
Then,
\begin{equation*}
	\bigl|\langle g(u^{\bm h})u_j^{\bm h},k^{\bm h}_{j+1}\rangle\bigr|+\bigl|\langle g(u^{\bm h})u_{j+1}^{\bm h},k^{\bm h}_j\rangle\bigr|\leq 
	C\varepsilon^{-1}\beta^{j+1/2}\left(\int_{I_j}|k^{\bm h}_{j}|+\int_{I_{j+1}}|k_{j+1}^{\bm h}|\right).
\end{equation*}
However, $u^{\bm h}_x$ is of one sign in $I_j$, thus $\int_{I_j}|k^{\bm h}_{j}|\leq C$ and we obtain \eqref{g(u^h)u_j-j+1}.
It remains to prove \eqref{g(u^h)u_j,k_j}. 
To do this, for $x\in I_j$, we write $u_j^{\bm h}=y_1+y_2$ and $k^{\bm h}_j=y_1+y_3$, 
where 
\begin{align*}
	y_1&=-(1-\chi^j)\phi^j_x-\chi^j\phi^{j+1}_x,\\
	y_2&=-\chi^j_x(\phi^{j+1}-\phi^j)+(1-\chi^j)\nu^j-\chi^j\nu^{j+1}, \\
	y_3&=(1-\gamma^j)u^{\bm h}_x-\chi^j_x(\phi^{j+1}-\phi^j). 
\end{align*}
Then,
\begin{equation*}
	\langle g(u^{\bm h})u_j^{\bm h},k^{\bm h}_j\rangle=\int_{h_{j-1/2}}^{h_{j+1/2}}g(u^{\bm h}(x))\bigl\{y_1(x)^2+y_1(x)(y_2(x)+y_3(x))+y_2(x)y_3(x)\bigr\}dx.
\end{equation*}
From \eqref{u^h(x)} and the definition of $\chi$, it follows that
\begin{equation}\label{eq:cor1}
	\int_{h_{j-1/2}}^{h_{j+1/2}}g(u^{\bm h})y_1^2 dx=\int_{h_{j-1/2}}^{h_j}g(\phi^j)\left(\phi^j_x\right)^2dx+\int_{h_j}^{h_{j+1/2}}g(\phi^{j+1})\left(\phi^{j+1}_x\right)^2dx+E,
\end{equation}
where 
\begin{equation*}
	E=\int_{h_j-\varepsilon}^{h_j+\varepsilon}g(u^{\bm h})y_1^2dx-\int_{h_j-\varepsilon}^{h_j}g(\phi^j)(\phi^j_x)^2dx-\int_{h_j}^{h_j+\varepsilon}g(\phi^{j+1})(\phi_x^{j+1})^2dx.
\end{equation*}
By writing $y_1=\chi^j(\phi^j_x-\phi^{j+1}_x)-\phi^j_x=(1-\chi^j)(\phi^{j+1}_x-\phi^j_x)-\phi^{j+1}_x$, we get
\begin{equation*}
	E=\int_{h_j-\varepsilon}^{h_j}\{g(u^{\bm h})-g(\phi^j)\}(\phi^j_x)^2dx+\int_{h_j}^{h_j+\varepsilon}\{g(u^{\bm h})-g(\phi^{j+1})\}(\phi^{j+1}_x)^2dx+R.
\end{equation*}
Using \eqref{fi^j-fi^j+1} and the estimate $|\phi^j_x|\leq C\varepsilon^{-1}$, we deduce that $R$ satisfies
\begin{align*}
	|R|\leq C\biggl(\int_{h_j-\varepsilon}^{h_j}g(u^{\bm h})|\phi^j_x||\phi^{j+1}_x-\phi^j_x|+\int_{h_j}^{h_j+\varepsilon}g(u^{\bm h})|\phi^{j+1}_x||\phi^{j+1}_x-\phi^j_x|\\
	+\int_{h_j-\varepsilon}^{h_j+\varepsilon}g(u^{\bm h})|\phi^{j+1}_x-\phi^j_x|^2dx\biggr)\leq C\varepsilon^{-1}\max\{\alpha^{j-1/2},\alpha^{j+1/2}\}.
\end{align*}
Moreover, for \eqref{fi^j-fi^j+1} we have
\begin{align*}
	\biggl|\int_{h_j-\varepsilon}^{h_j}\{g(u^{\bm h})-g(\phi^j)\}(\phi^j_x)^2dx\biggr| & \leq C\int_{h_j-\varepsilon}^{h_j}|u^{\bm h}-\phi^j|(\phi^j_x)^2 dx \\
	& \leq C\varepsilon^{-2}\int_{h_j-\varepsilon}^{h_j}\chi^j|\phi^{j+1}-\phi^j|\leq C\varepsilon^{-1}|\alpha^{j-1/2}-\alpha^{j+1/2}|.
\end{align*}
Similarly, we can estimate the other term and obtain $|E|\leq C\varepsilon^{-1}\max\{\alpha^{j-1/2},\alpha^{j+1/2}\}$. 

Let us now compute 
\begin{equation*}
	\int_{h_{j-1/2}}^{h_j}g(\phi^j)(\phi^j_x)^2dx\,;
\end{equation*}
 the other remaining term in \eqref{eq:cor1} is evaluated similarly.
To do this, we observe that since $\phi^j(x)=\phi(x-h_{j-1/2},h_{j} - h_{j-1},(-1)^j)$
and $\phi(x,\ell,\pm1)$ is solution of \eqref{fi(x,l)}, positive or negative respectively, we have
\begin{equation}\label{fi_x^2-F}
	\varepsilon^2(\phi^j_x)^2=2(F(\phi^j)-\alpha^{j-1/2}),
\end{equation}
where $\alpha^{j-1/2}=F(\phi^j(h_{j-1/2 }))$. 
In what follows, we are considering the case $\phi^j(x)<0$ in $[h_{j-1/2},h_{j}]$ (i.e. $j$ odd); 
treatment of $\phi(x-h_{j-1/2},h_{j} - h_{j-1},+1)$ is similar. 
By using \eqref{fi_x^2-F} and changing variable, we obtain 
\begin{align*}
	\varepsilon\int_{h_{j-1/2}}^{h_j}g(\phi^j)(\phi^j_x)^2dx & =\int_{\phi^j(h_{j-1/2})}^{0}g(s)\sqrt{2(F(s)-\alpha^{j-1/2})}\,ds\\
	& \qquad -\int_{\phi^j(h_{j-1/2})}^{0}g(s)\sqrt{2F(s)}\,ds+\int_{\phi^j(h_{j-1/2})}^{0}g(s)\sqrt{2F(s)}\,ds\\
	&= \int_{-1}^{0}g(s)\sqrt{2F(s)}\,ds-\int_{-1}^{\phi^j(h_{j-1/2})}g(s)\sqrt{2F(s)}\,ds\\
	& \qquad -\sqrt2\int_{\phi^j(h_{j-1/2})}^{0}\frac{\alpha^{j-1/2} g(s)}{\sqrt{F(s)-\alpha^{j-1/2}}+\sqrt{F(s)}}\,ds.
\end{align*}
Since $F(s)\geq\alpha^{j-1/2}$ for $\phi^j(h_{j-1/2})\leq s\leq0$ and $F(s)\leq\alpha^{j-1/2}$ for $-1\leq s\leq\phi^j(h_{j-1/2})$, we have 
\begin{align*}
	\int_{-1}^{\phi^j(h_{j-1/2})}g(s)\sqrt{2F(s)}\,ds+\sqrt2\int_{\phi^j(h_{j-1/2})}^{0}\frac{\alpha^{j-1/2} g(s)}{\sqrt{F(s)-\alpha^{j-1/2}}+\sqrt{F(s)}}\,ds\\
	\leq C\sqrt{\alpha^{j-1/2}} \leq C \beta^{j-1/2}.
\end{align*}
Then, we can conclude that
\begin{equation*}
	\left | \int_{h_{j-1/2}}^{h_{j+1/2}}g(u^{\bm h})y_1^2 dx- \varepsilon^{-1}C_{F,g} \right | \leq C\varepsilon^{-1}\max\left\{\beta^{j-1/2},\beta^{j+1/2}\right\}.
\end{equation*}
Moreover, from \eqref{fi^j-fi^j+1} and \eqref{eq:nuj}, it follows that for $x\in[h_{j-1/2},h_{j+1/2}]$ we have
\begin{equation*}
	|y_2|\leq C\varepsilon^{-1}\max\{\beta^{j-1/2},\beta^{j+1/2}\}, \qquad |y_3|\leq|(1-\gamma^j)u^{\bm h}_x|+C\varepsilon^{-1}|\alpha^{j-1/2}-\alpha^{j+1/2}|.
\end{equation*}
We claim that
\begin{equation}\label{eq:y3}
	|(1-\gamma^j)u^{\bm h}_x|\leq C\varepsilon^{-1}\max\{\beta^{j-1/2},\beta^{j+1/2}\}.
\end{equation}
Indeed, $(1-\gamma^j)u^{\bm h}_x=0$ in $[h_{j-1/2}+2\varepsilon, h_{j+1/2}-2\varepsilon]$ and   
\begin{equation*}
	u^{\bm h}_x(x)=\left\{
	\begin{aligned}	
	&\phi^j_x(x) \qquad \qquad & x\in[h_{j-1/2},h_{j-1/2}+2\varepsilon],\\
	&\phi^{j+1}_x(x) \qquad \qquad & x\in[h_{j+1/2}-2\varepsilon,h_{j+1/2}].
	\end{aligned}\right.
\end{equation*}
Using the fact that
\begin{equation*}
	 |\phi_x(x,\ell,\pm1)|\leq C\varepsilon^{-1}\sqrt{F(\phi(x,\ell,\pm1))}\leq C\varepsilon^{-1}(1\mp\phi(0,\ell,\pm1))=C\varepsilon^{-1}\beta_\pm(r), 
\end{equation*}
for $|x|\leq2\varepsilon$, we obtain \eqref{eq:y3} and so, $|y_3|\leq C\varepsilon^{-1}\max\{\beta^{j-1/2},\beta^{j+1/2}\}$. 
Therefore,
\begin{align*}
	\biggl|\int_{h_{j-1/2}}^{h_{j+1/2}}g(u^{\bm h})y_1(y_2+y_3)\biggr|&\leq C\varepsilon^{-1}\max\{\beta^{j-1/2},\beta^{j+1/2}\}\int_{h_{j-1/2}}^{h_{j+1/2}}|y_1|\\
	&\leq C\varepsilon^{-1}\max\{\beta^{j-1/2},\beta^{j+1/2}\}.
\end{align*}
Also,
\begin{equation*}
	\biggl|\int_{h_{j-1/2}}^{h_{j+1/2}}g(u^{\bm h})y_2y_3\biggr|\leq C\varepsilon^{-2}\max\{\beta^{j-1/2},\beta^{j+1/2}\}\varepsilon
	\leq C\varepsilon^{-1}\max\{\beta^{j-1/2},\beta^{j+1/2}\},
\end{equation*}
because $y_3$ is supported on a set of measure proportional to $\varepsilon$.
\end{proof}

We are ready to analyze the term  $\mathcal{G}(\bm h)$. To this aim, let us introduce the constant  
\begin{equation*}
	\gamma_\tau:=\frac{\sqrt{2}}{D_\infty}\int_{-1}^1\sqrt{F(s)}g(s,\tau)\,ds = \frac{C_{F,g}}{D_\infty}.
\end{equation*} 
Then, in view of \eqref{g(u^h)u_j,k_j}--\eqref{g(u^h)u_j-j+1}--\eqref{g(u^h)u_j,k_i}, we obtain
\begin{equation*}
	|\langle g(u^{\bm h},\tau)u_i^{\bm h},k^{\bm h}_j\rangle| \leq C\varepsilon^{-1}, \ \hbox{for any}\ i,j,
\end{equation*}
and, being
\begin{equation*}
D^{-1}(\bm h) = \varepsilon D_\infty^{-1} \Big \{\mathbb{I}_N -  \big (  \mathbb{I}_N - \varepsilon D^{-1} _\infty D(\bm h)\big ) \Big\}^{-1}
	= \varepsilon D_\infty^{-1}\sum_{k=0}^\infty\Big\{\varepsilon D_\infty^{-1}\big(\varepsilon ^{-1}D_\infty\mathbb{I}_N-D(\bm h)\big)\Big\}^k,
\end{equation*}
one has
\begin{equation*}
	\|D^{-1}(\bm h)-\varepsilon D_\infty^{-1} \mathbb I_N \|_{{}_{\infty}}\leq\varepsilon C\exp(-A\ell^{\bm h}/2\varepsilon).
\end{equation*}
Hence, for $g=g(u^{\bm h},\tau)$,
\begin{align}
	|\mathcal{G}_{ij}(\bm h)-\gamma_\tau\mathbb \delta_{ij} |& \leq \left | 
	\sum_{l=1}^N (D^{-1}_{il}(\bm h) - \varepsilon  D^{-1}_\infty \delta_{il} ) \langle g\,u^{\bm h}_j,k^{\bm h}_l\rangle \right |
	+ \left | \varepsilon D^{-1}_\infty  \langle g\,u^{\bm h}_j,k^{\bm h}_i\rangle -  \frac{C_{F,g}}{D_\infty} \delta_{ij} \right | \nonumber \\ 
	& \leq C\exp(-A\ell^{\bm h}/2\varepsilon). \label{eq:G-G*}
\end{align}
Therefore, in \eqref{eq:vecth} we substitute the matrix $\mathcal{G}(\bm h)$ with $\gamma_\tau\mathbb I_N$.

Let us now focus our attention on the term $\tau\mathcal{Q}(\bm h,\bm h')$; analogously to the previous terms we have
\begin{equation*}
	|\mathcal{Q}(\bm h,\bm h')-\mathcal{Q}^*(\bm h,\bm h')|_{{}_{\infty}}\leq C |\mathcal{Q}^*(\bm h,\bm h')|_{{}_{\infty}}\exp(-A\ell^{\bm h}/2\varepsilon),
\end{equation*}
where 
\begin{equation*}
	\mathcal{Q}_j^*(\bm h,\bm h'):=\varepsilon D_\infty^{-1}\sum_{i,l=1}^N \langle u^{\bm h}_{il},k^{\bm h}_j\rangle\bm h'_l\bm h'_i.
\end{equation*}
Then, let us study the elements $\langle u^{\bm h}_{il},k^{\bm h}_j\rangle$, 
which shall be treated in a similar way of Proposition \ref{prop:scal-prod}. 
Since $k^{\bm h}_j$ is supported in $I_j$, it follows that $\langle u^{\bm h}_{il},k^{\bm h}_j\rangle=0$ if either $|i-j|>1$ or $|l-j|>1$.
For all the remaining terms, from the expression of $u^{\bm h}_j$ in Lemma \ref{lem:u^h_j}, 
and using the bounds in \cite{Carr-Pego, Carr-Pego2}, the only one which may not be exponentially small is for $i=l=j$. 
Therefore, here we omit the tedious, but straightforward control of such terms and we discuss only $\langle u^{\bm h}_{jj},k^{\bm h}_j\rangle$.
To this end, observe that in the interval $I_j$, by differentiating \eqref{eq:perujj} with respect to $h_j$, we have
\begin{align*}
u^{\bm h}_{xj}&=\phi^j_{jx}+\chi^j(\phi^{j+1}_{jx}-\phi^j_{jx})+\chi^j_{xx}(\phi^j-\phi^{j+1})+
	\chi^j_x[(\phi^{j+1}_j-\phi^j_j)-(\phi_x^{j+1}-\phi_x^j)],\\
	u^{\bm h}_{jj}&=-u^{\bm h}_{xj}+\nu_j^j-\chi^j(\nu^j_j+\nu_j^{j+1})-\chi_j^j(\nu^j+\nu^{j+1}).
\end{align*}
Using 
\begin{equation*}
	\phi_j^j=-\phi^j_x+\nu^j, \quad \phi_j^{j+1}=-\phi_x^{j+1}-\nu^{j+1}, \quad \phi_{jx}^j=-\phi^j_{xx}+\nu^j_x,
	\quad \phi_{jx}^{j+1}=-\phi_{xx}^{j+1}-\nu^{j+1}_x,
\end{equation*}
 and from the expression of $u^{\bm h}_{xx}$ obtained again from \eqref{eq:perujj},  in $x\in I_j$ we infer
\begin{equation*}
	u^{\bm h}_{xj}=-u^{\bm h}_{xx}+\nu^j_x-\chi^j(\nu_x^{j+1}+\nu^j_x)-\chi^j_x(\nu^{j+1}+\nu^j).
\end{equation*}
Hence, we can conclude that $u^{\bm h}_{jj}=u^{\bm h}_{xx}+R^j$,
and so
\begin{equation*}
	\langle u^{\bm h}_{jj},k^{\bm h}_j\rangle=-\int_{I_j}u^{\bm h}_{xx}u^{\bm h}_x\,dx+
	\int_{I_j}u^{\bm h}_{xx}(1-\gamma^j)u^{\bm h}_x\,dx+\int_{I_j}R^jk^{\bm h}_j\,dx.
\end{equation*}
Reasoning as in the proof of the Proposition \ref{prop:scal-prod}, and taking into account the needed bounds for the higher involved derivatives \cite{Carr-Pego, Carr-Pego2}, 
one can prove that the last two integrals are exponentially small, whereas for the first integral we obtain
\begin{equation*}
	\int_{h_{j-1/2}}^{h_{j+1/2}}\bigl(-u^{\bm h}_{xx}(x)u^{\bm h}_x(x)\bigr)dx=
	\frac12\bigl(u^{\bm h}_x(h_{j-1/2})^2-u^{\bm h}_x(h_{j+1/2})^2\bigr)=0,
\end{equation*}
because $u^{\bm h}_x(h_{j-1/2})=0$ for all $j=1,\dots,N$.
Therefore, we obtain that there exists $c>0$ such that
\begin{equation}\label{eq:Q*}
	|\mathcal{Q}(\bm h,\bm h')|_{{}_{\infty}}\leq C\exp(-c/\varepsilon)|\bm h'|^2_{{}_{\infty}}.
\end{equation}
In conclusion, using the estimates \eqref{eq:P-P*}, \eqref{eq:G-G*} and \eqref{eq:Q*}, and neglecting all the exponentially small terms in \eqref{eq:vecth}, 
we end up with the reduced system 
\begin{equation}\label{h-eq-approx}
	\tau\bm h''+\gamma_\tau\bm h'=\mathcal{P}^*(\bm h).
\end{equation}
In the case of the damped wave equation with bistable nonlinearity, $g(u,\tau)\equiv 1$, and therefore $\gamma_\tau=1$. 
Moreover, for the Allen--Cahn equation with relaxation, $g(u,\tau) = 1 + \tau f'(u)$ and 
 \begin{align*}
	\gamma_\tau & 
		= 1 + \frac{\sqrt{2}\,\tau}{D_\infty} \int_{-1}^1\sqrt{F(s)}F''(s)\,ds 
		= 1 - \frac{\tau}{D_\infty}\int_{-1}^1\displaystyle{\frac{(F'(s))^2}{\sqrt{2F(s)}}}\,ds 
	< 1.
\end{align*} 
Hence, in the latter case, the effect of the parameter $\tau>0$ is present also in the friction term  $\gamma_\tau\bm h'$,
and in particular it speeds up the dynamics with respect to the simpler nonlinear damped wave equation, being the coefficient smaller. 
This richer effect on the dynamics 
in the present analysis confirms what has been already observed in the study of traveling waves  in 
 \cite{LMPS}, where again the relaxation parameter $\tau$ in the case of   the Allen--Cahn equation with relaxation
  affects the speed of the wave also though a modification of the friction effects. 

\subsection{Comparison with the parabolic case}
Now, if $\gamma_\tau\to1$ as $\tau\rightarrow0$, taking formally  the limit in \eqref{h-eq-approx} we obtain the
system $\bm h'=\mathcal{P}^*(\bm h)$. 
The structure of solutions of this system of ordinary differential equations is studied in \cite{Carr-Pego}
to describe the evolution of the layer positions in the parabolic case \eqref{AllenCahn}. 
We can write
\begin{equation*}
	\mathcal{P}^*(\bm h)=-\nabla W(\bm h),
\end{equation*}
where $W$ is defined in the following way. 
For $s>\rho^{-1}$, define $W_\pm$ by $W'_\pm(s)=D^{-1}_\infty\alpha_\pm(s^{-1})$
and set $W_j=W_+$ for $j$ even, $W_j=W_-$ for $j$ odd. 
For $\bm h\in\Omega_\rho$ let
\begin{equation*}
	W(\bm h):=\varepsilon^2\biggl[\tfrac12W_1((h_1-h_0)/\varepsilon) +\sum_{j=2}^NW_j((h_j-h_{j-1})/\varepsilon) 
		+\tfrac12W_{N+1}((h_{N+1}-h_N)/\varepsilon)\biggr].
\end{equation*}
Since $h_0=-h_1$ and $h_{N+1}=2-h_N$, we have
\begin{equation*}
\begin{aligned}
	\frac{\partial W(\bm h)}{\partial h_j} & = \varepsilon\bigl[W'_j((h_j-h_{j-1})/\varepsilon)-W'_{j+1}((h_{j+1}-h_j)/\varepsilon)\bigr]\\
	& = -\varepsilon D_\infty^{-1}(\alpha^{j+1/2}-\alpha^{j-1/2}).
\end{aligned}
\end{equation*}
Then, we can write \eqref{h-eq-approx} as $\tau\bm h''+\bm h'=-\nabla W(\bm h)$. 
For a solution $\bm h$ with values in $\Omega_\rho$, the energy $\mathsf{E}_\tau=\frac12\tau|\bm h'|^2+W(\bm h)$ is nonincreasing and
\begin{equation*}
	\frac{d\mathsf{E}_\tau}{dt}=\tau\bm h'\cdot\bm h''+\nabla W(\bm h)\cdot\bm h'=-\gamma_\tau|\bm h'|^2\leq 0.
\end{equation*}
Note that $\gamma_\tau$ large implies a greater dissipation of energy.
\vspace{0.2cm}
\begin{prop}[Carr--Pego \cite{Carr-Pego}] \label{lem-W}
If $\rho$ is sufficiently small, then the function $W$ has a unique critical point $\bm h^e$, which is a strict local maximum.
\end{prop}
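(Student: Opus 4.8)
The plan is to reduce the critical‑point equation, via the ``gap variables'', to a single scalar equation solvable by monotonicity and the intermediate value theorem, and then to establish strict concavity of $W$ so that the resulting critical point is automatically a strict (indeed global) maximum. Recall from the computation preceding the statement that $\partial_{h_j}W(\bm h)=-\varepsilon D_\infty^{-1}(\alpha^{j+1/2}-\alpha^{j-1/2})$, $j=1,\dots,N$, so $\bm h\in\Omega_\rho$ is critical precisely when $\alpha^{1/2}=\alpha^{3/2}=\cdots=\alpha^{N+1/2}$. Passing to $\ell_j:=h_j-h_{j-1}$, $j=1,\dots,N+1$ (with $h_0=-h_1$, $h_{N+1}=2-h_N$), one has the linear constraint $\tfrac12\ell_1+\ell_2+\cdots+\ell_N+\tfrac12\ell_{N+1}=1$, and $\bm h\mapsto(\ell_1,\dots,\ell_{N+1})$ is an affine bijection of $\Omega_\rho$ onto the open bounded convex set $\{\tfrac12\ell_1+\ell_2+\cdots+\ell_N+\tfrac12\ell_{N+1}=1,\ \ell_j>\varepsilon/\rho\}$. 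Since $\alpha^{j+1/2}=\alpha_{\pm}(\varepsilon/\ell_{j+1})$ with the sign alternating in $j$, and since by Proposition \ref{prop:alfa,beta} (whose asymptotic expansion gives $\alpha_\pm'>0$ on $(0,\rho)$ for $\rho$ small, together with $\alpha_\pm(0^+)=0$) each $\alpha_\pm$ is a strictly increasing $C^1$ bijection of $(0,\rho)$ onto $(0,\alpha_\pm(\rho))$, criticality becomes: there is $c\in(0,\min\{\alpha_+(\rho),\alpha_-(\rho)\})$ with every odd‑indexed gap equal to $\ell_-(c):=\varepsilon/\alpha_-^{-1}(c)$ and every even‑indexed gap equal to $\ell_+(c):=\varepsilon/\alpha_+^{-1}(c)$; substituting into the constraint, whose weights split equally (total $N/2$) between odd and even gaps, this reduces to the scalar equation $\ell_-(c)+\ell_+(c)=2/N$.

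Then I would note that $c\mapsto\ell_-(c)+\ell_+(c)$ is continuous and strictly decreasing on $(0,\min\{\alpha_+(\rho),\alpha_-(\rho)\})$, blows up as $c\to0^+$, and tends to a finite limit $L$ at the right endpoint; using the asymptotics of Proposition \ref{prop:alfa,beta} one checks $L<2/N$ once $\rho$ is sufficiently small (here enter $\varepsilon/\rho<1/N$ from \eqref{triangle} and the comparison of $\alpha_\pm$ at $\rho$ with $\alpha_\pm$ at the smaller ``interior'' arguments $\varepsilon/\ell_\pm(c)$). The intermediate value theorem then gives a unique root $c^{*}$, hence a unique critical point $\bm h^{e}$, which lies in $\Omega_\rho$ since $c^{*}<\min\{\alpha_+(\rho),\alpha_-(\rho)\}$ forces every gap to exceed $\varepsilon/\rho$.

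To see that $\bm h^{e}$ is a strict local maximum I would prove that $W$ is strictly concave on the convex set $\Omega_\rho$: from $W_j'(s)=D_\infty^{-1}\alpha_\pm(s^{-1})$ one has $W_j''(s)=-D_\infty^{-1}s^{-2}\alpha_\pm'(s^{-1})<0$ on $(\rho^{-1},\infty)$ (again by the derivative asymptotics of Proposition \ref{prop:alfa,beta}), so each $W_j$ is strictly concave; writing $W=\varepsilon^2[\tfrac12W_1(\ell_1/\varepsilon)+\sum_{j=2}^NW_j(\ell_j/\varepsilon)+\tfrac12W_{N+1}(\ell_{N+1}/\varepsilon)]$ in the gap variables, its Hessian in $\ell$ is diagonal with strictly negative entries, and pulling back through the injective affine map $\bm h\mapsto(\ell_1,\dots,\ell_{N+1})$ makes the Hessian of $W$ negative definite at every point of $\Omega_\rho$. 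Thus $W$ is strictly concave, so $\bm h^{e}$ is its unique critical point (re‑confirming uniqueness) and a strict global, hence strict local, maximum. I expect the only genuinely non‑routine point to be the quantitative estimate $L<2/N$ in the existence step; the rest is the affine change of variables together with one‑variable monotonicity and concavity.
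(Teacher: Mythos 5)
Your argument is correct, and the existence--uniqueness step coincides with the paper's in substance: both pass to the gap variables $\ell_j$, invoke the monotonicity of $\alpha_\pm$ from Proposition~\ref{prop:alfa,beta} to reduce criticality, under the constraint $\tfrac12\ell_1+\ell_2+\cdots+\ell_N+\tfrac12\ell_{N+1}=1$, to one monotone scalar equation, and close by the intermediate value theorem. You parametrize by the common level $c$ of the $\alpha^{j\pm1/2}$, whereas Carr--Pego parametrize by $\ell_-$ along $\ell_+=2/N-\ell_-$ and track the sign of the auxiliary quantity $\gamma$; these are the same monotone scalar problem. Both proofs leave implicit the same endpoint check --- in your notation that $L<2/N$ --- which in fact requires $\varepsilon/\rho$ to lie below $2\min\{A_\pm\}/\bigl(N(A_++A_-)\bigr)$ up to $O(\varepsilon)$, not merely ``$\rho$ small''; you inherit this imprecision from the source, so it is not a gap of your proposal relative to the paper. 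The step where you genuinely depart is the second-order one, and your version is tidier. Carr--Pego compute the Hessian $B$ of $W$ in the $\bm h$ coordinates, note it is tridiagonal with entries built from $\omega_j=W_j''<0$, and verify negative definiteness through the identity $\sum_{i,j}B_{ij}y_iy_j=2\omega_1y_1^2+\sum_{j=2}^N\omega_j(y_{j-1}-y_j)^2+2\omega_{N+1}y_N^2$. You instead observe that in the gap coordinates the Hessian is diagonal with strictly negative entries and pull this back through the injective linear part $T$ of $\bm h\mapsto(\ell_1,\dots,\ell_{N+1})$: then $\nabla^2 W(\bm h)=T^\top\,\mathrm{diag}(\cdot)\,T$ is negative definite because $T$ has trivial kernel. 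This is the same computation --- the paper's sum of squares is precisely $y^\top T^\top\,\mathrm{diag}(\cdot)\,Ty$ written out --- but your packaging makes the negativity structural rather than ad hoc, holds at every $\bm h\in\Omega_\rho$ rather than only at $\bm h^e$, and therefore yields the marginally stronger conclusion that $W$ is strictly concave on $\Omega_\rho$, so that $\bm h^e$ is a strict \emph{global} maximum and uniqueness of the critical point is automatic.
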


\begin{proof}
If $\bm h^e$ is a critical point of $W$, then $\alpha^{j-1/2}=\alpha^{j+1/2}$ for $j=1,\dots,N$. 
Define $\ell_j:=h_j-h_{j-1}$. 
From Proposition \ref{prop:alfa,beta}, it follows that for $r$ sufficiently small $\alpha_\pm(r)$ are monotone and so, 
$\ell_j=\ell_{j+2}$ for $j=1,\dots,N-1$. Let $\ell_-=\ell_1$, $\ell_+=\ell_2$. 
Since $\bm h\in\Omega_\rho$, we have that $\frac12\ell_1+\ell_2+\dots+\ell_N+\frac12\ell_{+1}=1$ 
and then $\ell_-+\ell_+=2/N$. 
The condition $\alpha^{j-1/2}=\alpha^{j+1/2}$, also, gives $\alpha_+(\varepsilon/\ell_+)=\alpha_-(\varepsilon/\ell_-)$. 
Using Proposition \ref{prop:alfa,beta}, we have that critical points correspond to zeros of 
\begin{equation*}
	\gamma=\varepsilon^{-1}(A_-\ell_--A_+\ell_+)+2\ln(K_+A_+/K_-A_-)+O\bigl(\rho^{-1} \exp(-A/2\rho)\bigr),
\end{equation*}
for $\ell_-\in[\varepsilon\rho^{-1}, 2N^{-1}-\varepsilon\rho^{-1}]$ with $\ell_+=2N^{-1}-\ell_-$. 
For $\rho$ sufficiently small, $\gamma>0$ when $\ell_-=\varepsilon\rho^{-1}$ and $\gamma<0$ when $\ell_-=2N^{-1}-\varepsilon\rho^{-1}$; 
hence a critical point must exist. 
It is unique, because $\alpha_+$ and $\alpha_-$ are monotone. 

To determine the nature of the critical point, consider the Hessian of $W$, $B_{ij}=\partial^2W(\bm h^e)/\partial h_i\partial h_j$. 
The matrix $B$ is symmetric and tri-diagonal with 
\begin{equation*}
\begin{aligned}
B_{11} & =2\omega_1+\omega_2, & \qquad B_{jj}&=\omega_j+\omega_{j+1}, \quad  &\textrm{for}\, j=2,\dots,N-1,\\
B_{NN} & =\omega_N+2\omega_{N+1}, & \qquad  B_{j,j+1}&=-\omega_{j+1}, \quad  &\textrm{for}\, j=1,\dots,N-1,
\end{aligned}
\end{equation*}
where $\omega_j=W''_j((h_j^e-h^e_{j-1})/\varepsilon)$. 
From Proposition \ref{prop:alfa,beta}, $w_j<0$ for $j=1,\dots,N$. Then, for $y\in\mathbb{R}^N$,
\begin{equation*}
	\sum_{i,j=1}^N B_{ij}y_iy_j=2\omega_1y_1^2+\sum_{j=2}^N\omega_j(y_{j-1}-y_j)^2+2\omega_{N+1}y_N^2,
\end{equation*}
so that $B$ is negative definite and $\bm h^e$ is a local maximum.
\end{proof}

If $f$ is odd, all the $\pm$ subscripts can be ignored, e.g. $\alpha_+=\alpha_-$, $\ell_+=\ell_-$ and
$h_j^e-h^e_{j-1}=1/N$ for $j=1,\dots,N$. 
In general, the steady state domain lengths satisfy
\begin{equation*}
\begin{aligned}
	\ell_- & =2\left(A_+/N+\varepsilon\ln(K_-A_-/K_+A_+)\right)/(A_++A_-)+O\left(\rho^{-1}\exp(-A/2\rho)\right),\\
	\ell_+ & =2\left(A_-/N+\varepsilon\ln(K_+A_+/K_-A_-)\right)/(A_++A_-)+O\left(\rho^{-1}\exp(-A/2\rho)\right).
\end{aligned}
\end{equation*}
Thus, system \eqref{h-eq-approx} has a unique equilibrium point $(\bm h^e,0)$ with $\bm h^e\in\Omega_\rho$. 
From Proposition \ref{prop-L(u^h)}, $\mathcal{L}(u^{\bm h^e})=0$ and so $u^{\bm h^e}$ is a stationary solution of \eqref{hyp-al-ca}.
In other words, there is a unique stationary solution $u^e$ of \eqref{hyp-al-ca} with $N$ transition layers and $u(0)<0$; 
moreover, $u^e\in\mathcal{M}$ with $u^e=u^{\bm h^e}$ where $\bm h^e=\mathcal{H}(u^e)$. 
Note, also, that by definition \eqref{Psi(h)}, $\Psi(\bm h)=0$ if and only if $\bm h=\bm h^e$ so that
the channel $\mathcal{Z}_{{}_{\Gamma,\rho}}$ is ``pinched'' at $u=u^e$.
\vskip.25cm

Now, let us study the stability of the equilibrium point $(\bm h^e,0)$ for system \eqref{h-eq-approx}.
To do this, rewrite it as the first order system
\begin{equation}\label{h-system}
\begin{cases}
	\bm h'=\bm\eta, \\
	\tau\bm\eta'=\mathcal{P}^*(\bm h)-\gamma_\tau\bm\eta.
\end{cases}
\end{equation}

\begin{prop}
System \eqref{h-system} has a unique equilibrium point $(\bm h^e,0)$, which is unstable. 
In particular, the Jacobian matrix evaluated at $(\bm h^e,0)$ has $N$ negative eigenvalues and $N$ positive eigenvalues.
\end{prop}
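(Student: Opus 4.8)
The plan is to linearize \eqref{h-system} at $(\bm h^e,0)$ and compute the spectrum of the Jacobian explicitly, exploiting the structure already established for the Hessian of $W$. First I would record that a point $(\bm h,\bm\eta)$ is a rest point of \eqref{h-system} if and only if $\bm\eta=0$ and $\mathcal{P}^*(\bm h)=0$; since $\mathcal{P}^*=-\nabla W$ and, by Proposition \ref{lem-W}, $W$ has the unique critical point $\bm h^e$ (a strict local maximum provided $\rho$ is small), the only equilibrium is $(\bm h^e,0)$. Because $\alpha_\pm$ are smooth (Proposition \ref{prop:alfa,beta}), $\mathcal{P}^*$ is $C^1$ and $D\mathcal{P}^*(\bm h^e)=-D^2W(\bm h^e)=-B$, where $B$ is the symmetric, tridiagonal, \emph{negative definite} matrix computed in the proof of Proposition \ref{lem-W}.

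Writing $\bm z=(\bm h,\bm\eta)$, the linearization of \eqref{h-system} at $(\bm h^e,0)$ is $\bm z'=J\bm z$ with
\begin{equation*}
	J=\begin{pmatrix} 0 & \mathbb{I}_N \\[2pt] -\tau^{-1}B & -\tau^{-1}\gamma_\tau\,\mathbb{I}_N\end{pmatrix}.
\end{equation*}
Next I would determine its eigenvalues. Seeking $\lambda$ and $(\xi,\mu)\neq 0$ with $J(\xi,\mu)^{\mathsf T}=\lambda(\xi,\mu)^{\mathsf T}$, the first block gives $\mu=\lambda\xi$ and the second gives $-\tau^{-1}B\xi-\tau^{-1}\gamma_\tau\lambda\xi=\lambda^2\xi$, that is $-B\xi=(\tau\lambda^2+\gamma_\tau\lambda)\xi$. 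Hence $\xi$ must be an eigenvector of the positive definite matrix $-B$; let $0<\mu_1\leq\cdots\leq\mu_N$ be its eigenvalues, with an orthonormal eigenbasis $\xi_1,\dots,\xi_N$ of $\mathbb{R}^N$. For each $k$, the eigenvalues of $J$ on the corresponding direction are the two roots of $\tau\lambda^2+\gamma_\tau\lambda-\mu_k=0$, namely
\begin{equation*}
	\lambda^\pm_k=\frac{-\gamma_\tau\pm\sqrt{\gamma_\tau^{\,2}+4\tau\mu_k}}{2\tau}.
\end{equation*}
Since $\gamma_\tau=C_{F,g}/D_\infty>0$ by \eqref{hypg} (as $g\geq c_g>0$) and $\tau\mu_k>0$, the discriminant strictly exceeds $\gamma_\tau^{\,2}$, so $\lambda^+_k>0>\lambda^-_k$ for every $k=1,\dots,N$. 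The $2N$ vectors $(\xi_k,\lambda^\pm_k\xi_k)$ are linearly independent (the top components $\xi_k$ are independent, and $\lambda^+_k\neq\lambda^-_k$), so they exhaust the spectrum of $J$: exactly $N$ positive and $N$ negative eigenvalues. In particular $(\bm h^e,0)$ is a hyperbolic equilibrium with an $N$-dimensional unstable manifold, hence unstable, which is the assertion.

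The only mildly delicate point is the bookkeeping that reduces the $2N\times2N$ block eigenvalue problem to $N$ uncoupled scalar quadratics and that checks the resulting $2N$ eigenvectors span $\mathbb{R}^{2N}$; once the negative definiteness of $B$ (already proven in Proposition \ref{lem-W}) and the positivity of $\gamma_\tau$ are in hand, the sign analysis of $\lambda^\pm_k$ is immediate, so I do not anticipate a genuine obstacle here.
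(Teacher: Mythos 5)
Your proof is correct and reaches the same spectral reduction as the paper, differing only in how the reduction is carried out. The paper computes the characteristic polynomial $\det(J-\lambda\mathbb{I}_{2N})$ via the Schur complement of the top-left block, obtaining $\tau^{-N}\det\bigl(B+(\gamma_\tau\lambda+\tau\lambda^2)\mathbb{I}_N\bigr)$, and concludes that $\lambda$ is an eigenvalue of $J$ if and only if $-(\tau\lambda^2+\gamma_\tau\lambda)$ is an eigenvalue of $B$. You instead solve the block eigenvector equation directly, getting $\mu=\lambda\xi$ and $-B\xi=(\tau\lambda^2+\gamma_\tau\lambda)\xi$, which yields the same scalar quadratic per eigendirection of $-B$. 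The two routes are equivalent; yours is a touch more elementary (no Schur complement formula needed) and slightly more informative, since it exhibits the $2N$ eigenvectors $(\xi_k,\lambda_k^\pm\xi_k)$ and checks that they span $\mathbb{R}^{2N}$, a fact the paper obtains implicitly from the fully factored degree-$2N$ characteristic polynomial. Both arguments then hinge on the negative definiteness of $B=D^2W(\bm h^e)$ established in Proposition \ref{lem-W} and on $\gamma_\tau>0$ (which you correctly trace back to \eqref{hypg}) to place exactly $N$ roots on each side of zero.
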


\begin{proof}
From Lemma \ref{lem-W}, it follows that the system \eqref{h-system} has a unique equilibrium point $(\bm h^e,0)$.
To determine the stability of this stationary point, we have to analyze the eigenvalues of the block matrix
\begin{equation*}
	J=\left(\begin{matrix} 0_N & \mathbb{I}_N \\
	-\frac1\tau B & -\frac{\gamma_\tau}\tau\mathbb{I}_N \end{matrix} \right),
\end{equation*}
where, as above, $B$ is the Hessian matrix of $W$ evaluated at $\bm h^e$. 
To this end, we make use of the Schur complement, defined for a general block   $n\times m$ matrix 
\begin{equation*}
M = 
\begin{pmatrix}
A _1 & A_2 \\
A_3  & A_4 \\
\end{pmatrix}
\end{equation*}
as follows: $ M/A_1 = A_4 - A_3A_1^{-1}A_2$, provided $A_1$ is an invertible square matrix.
In this case, $\det(M) = \det(A_1)\det(M/A_1)$. 
For $M_\lambda=J-\lambda\mathbb I_{2N}$, we have  
\begin{equation*}
	\det(M_\lambda)=\det\left(\begin{matrix} -\lambda\mathbb I_N & \mathbb{I}_N \\
	-\frac1\tau B & -\left(\frac{\gamma_\tau}\tau+\lambda\right)\mathbb{I}_N \end{matrix} \right)
	=\det(-\lambda\mathbb I_N)\det(M_\lambda/(-\lambda\mathbb I_N)),
\end{equation*}
where 
\begin{equation*}
	M_\lambda/(-\lambda\mathbb I_N)=-\left(\frac{\gamma_\tau}\tau+\lambda\right)\mathbb{I}_N-\frac{1}{\tau\lambda} B.
\end{equation*}
Then,
\begin{equation*}
	\det M_\lambda=(-\lambda)^N\det\left(-\frac{1}{\tau\lambda}B-\left(\frac{\gamma_\tau+\tau\lambda}\tau\right)\mathbb{I}_N\right)=
	\frac1{\tau^N}\det\left(B+(\gamma_\tau\lambda+\tau\lambda^2)\mathbb I_N\right).
\end{equation*}
It follows that $\lambda$ is an eigenvalue of $J$ if and only if $-\tau\lambda^2-\gamma_\tau\lambda$ is an eigenvalue of $B$. 
As previously shown in Lemma \ref{lem-W}, $B$ is symmetric and negative definite, so all the eigenvalues of $B$ are negative. 
Denote them by $-\mu_i^2$ for $i=1,\dots,N$. 
For each $-\mu_i^2$ there are two eigenvalues of $J$:
\begin{equation*}
	\lambda_i^+(\tau)=\frac{-\gamma_\tau+\sqrt{\gamma_\tau^2+4\tau\mu_i^2}}{2\tau}>0, \qquad \quad 
	\lambda_i^-(\tau)=\frac{-\gamma_\tau-\sqrt{\gamma_\tau^2+4\tau\mu_i^2}}{2\tau}<0.
\end{equation*}
In conclusion, the Jacobian matrix evaluated at $(\bm h^e,0)$ has $N$ positive eigenvalues $\lambda_i^+(\tau)$ 
and $N$ negative eigenvalues $\lambda_i^-(\tau)$, and so $(\bm h^e,0)$ is unstable. 
The eigenvalues  satisfy
\begin{equation*}
	\lim_{\tau\rightarrow0^+}\lambda_i^+(\tau)=\frac{\mu_i^2}{\gamma_0},
	\qquad \quad \lim_{\tau\rightarrow0^+}\lambda_i^-(\tau)=-\infty,
\end{equation*}
if $\displaystyle\lim_{\tau\to0^+}\gamma_\tau=:\gamma_0>0$. In particular, if $\gamma_0 = 1$, $\lambda_i^+(\tau)$ converge to the eigenvalues of the parabolic case, as expected.
\end{proof}

To conclude this section, we use singular perturbation theory to compare, for $\tau$ small,
the solutions of the system \eqref{h-system} and the ones of
\begin{equation}\label{h-par}
	\begin{cases}
	\bm h'=\bm\eta, \\
	\bm\eta=\mathcal{P}^*(\bm h),
	\end{cases}
\end{equation}
that is obtained by substituting $\tau=0$ in \eqref{h-system}, assuming that $\gamma_\tau\to1$ as $\tau\to0$. 
Denote by $(\bm h_p,\bm\eta_p)$ the solutions of \eqref{h-par}; $\bm h_p$ is solution of the system $\bm h'=\mathcal{P}^*(\bm h)$, 
that describes the evolution of layer positions in the parabolic case \eqref{AllenCahn}. 
Set
\begin{equation*}
	\E_\tau(t):=\gamma_\tau|\bm h(t)-\bm h_p(t)|+\tau|\bm\eta(t)-\bm\eta_p(t)|. 
\end{equation*}
A general theorem of Tihonov on singular perturbations could be applied to systems \eqref{h-system}-\eqref{h-par}. 
Specifically for the system \eqref{h-system} we have the following result.
\vspace{0.2cm}
\begin{thm}\label{thm:tau0}
Let $(\bm h,\bm \eta)$ be a solution of \eqref{h-system} and $(\bm h_p,\bm\eta_p)$ a solution of \eqref{h-par},
with $ \bm h(t),\bm h_p(t)\in\Omega_\rho$ for any $t\in[0,T]$. 
Then, there exists $C>0$ (independent of $\tau$) such that 
\begin{equation}\label{E(t)<}
	\E_\tau(t)\leq C(\E_\tau(0)+1-\gamma_\tau+\tau), \qquad \quad \mbox{ for } t\in[0,T].
\end{equation}
Moreover,
\begin{align}
	\int_0^T|\bm\eta(t)-\bm\eta_p(t)|dt &\leq\frac C{\gamma_\tau}(\E_\tau(0)+1-\gamma_\tau+\tau), \label{eta-L1}\\
	|\bm\eta(t)-\bm\eta_p(t)| &\leq\frac C{\gamma_\tau}(\E_\tau(0)+1-\gamma_\tau+\tau), \qquad \quad \mbox{ for } t\in[t_1,T],\label{eta-inf}
\end{align}
for all $t_1\in(0,T)$. 
In particular, from \eqref{E(t)<}, \eqref{eta-L1} and  \eqref{eta-inf}, it follows that, if $\gamma_\tau\to1$ and $\E_\tau(0)\rightarrow0$ as $\tau\rightarrow0$, then
\begin{equation*}
	\lim_{\tau\rightarrow0}\sup_{t\in[0,T]}|\bm h(t)-\bm h_p(t)|
	=\lim_{\tau\rightarrow0} \int_0^T|\bm\eta(t)-\bm\eta_p(t)|dt
	=\lim_{\tau\rightarrow0}\sup_{t\in[t_1,T]}|\bm\eta(t)-\bm\eta_p(t)|=0,
 \end{equation*}
for any $t_1\in(0,T)$.
\end{thm}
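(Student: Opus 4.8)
The plan is to derive a Gronwall-type inequality for the quantity $\E_\tau(t)=\gamma_\tau|\bm h-\bm h_p|+\tau|\bm\eta-\bm\eta_p|$ by differentiating it along the two flows. First I would subtract the two systems: writing $\bm a:=\bm h-\bm h_p$ and $\bm b:=\bm\eta-\bm\eta_p$, from the first equations one gets $\bm a'=\bm b$, while from the second equation in \eqref{h-system} and the algebraic relation in \eqref{h-par} one gets, after inserting $\bm\eta_p=\mathcal{P}^*(\bm h_p)$,
\begin{equation*}
	\tau\bm b'=\mathcal{P}^*(\bm h)-\mathcal{P}^*(\bm h_p)-\gamma_\tau\bm b+(1-\gamma_\tau)\mathcal{P}^*(\bm h_p)-\tau\bm\eta_p'.
\end{equation*}
Here $\bm\eta_p'=\tfrac{d}{dt}\mathcal{P}^*(\bm h_p)=D\mathcal{P}^*(\bm h_p)\,\bm\eta_p$ is bounded on $[0,T]$ because $\bm h_p(t)\in\Omega_\rho$ and $\mathcal{P}^*$ is smooth there, so the last two terms are $O(1-\gamma_\tau)+O(\tau)$; Lipschitz continuity of $\mathcal{P}^*$ on the compact set $\overline{\Omega}_\rho$ controls $|\mathcal{P}^*(\bm h)-\mathcal{P}^*(\bm h_p)|\le L|\bm a|$.

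Next I would compute $\tfrac{d}{dt}\bigl(\tfrac12|\bm b|^2\bigr)=\langle \bm b,\bm b'\rangle$ using the above, obtaining
\begin{equation*}
	\tfrac{\tau}{2}\frac{d}{dt}|\bm b|^2\le -\gamma_\tau|\bm b|^2+L|\bm a|\,|\bm b|+C(1-\gamma_\tau+\tau)|\bm b|,
\end{equation*}
and similarly $\tfrac12\tfrac{d}{dt}|\bm a|^2=\langle\bm a,\bm b\rangle\le|\bm a|\,|\bm b|$. Combining these with weights chosen so the dangerous cross term $L|\bm a||\bm b|$ is absorbed (using $\gamma_\tau$ bounded away from $0$ for $\tau$ small, since $\gamma_\tau\to1$), one arrives at a differential inequality of the form $\tfrac{d}{dt}\mathcal{V}\le C\,\mathcal{V}+C(1-\gamma_\tau+\tau)$ for a quantity $\mathcal{V}$ comparable to $\E_\tau$ — more precisely one works with $\mathcal{V}=\tfrac12\gamma_\tau^2|\bm a|^2+\tfrac12\tau^2|\bm b|^2$ or a similar coercive combination, then takes square roots. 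Gronwall's lemma on $[0,T]$ yields $\E_\tau(t)\le C(\E_\tau(0)+1-\gamma_\tau+\tau)$, which is \eqref{E(t)<}, with $C$ depending only on $T$, $L$ and a lower bound for $\gamma_\tau$, hence independent of $\tau$.

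For \eqref{eta-L1} I would integrate the inequality $\tau\tfrac12\tfrac{d}{dt}|\bm b|^2\le-\gamma_\tau|\bm b|^2+(\text{bounded})|\bm b|$ in time: the left side telescopes to $\tfrac\tau2(|\bm b(T)|^2-|\bm b(0)|^2)$, and using $|\bm b|\le|\bm b|^2/\epsilon+\epsilon$ type bounds together with the already-established $L^\infty$ control on $|\bm a|$ one extracts $\gamma_\tau\int_0^T|\bm b|^2\,dt\le C\tau(1+\ldots)$, then Cauchy–Schwarz on $[0,T]$ converts the $L^2$ bound into the $L^1$ bound. For the pointwise-in-time estimate \eqref{eta-inf} away from $t=0$, the standard singular-perturbation boundary-layer argument applies: the fast variable $\bm b$ relaxes on the time scale $\tau/\gamma_\tau$, so for $t\ge t_1>0$ the transient has decayed and $|\bm b(t)|$ is governed by the slow dynamics alone — concretely, one uses the variation-of-constants formula for the linear-in-$\bm b$ equation $\tau\bm b'=-\gamma_\tau\bm b+\bm r(t)$ with $\bm r=O(|\bm a|)+O(1-\gamma_\tau+\tau)$ bounded, giving $|\bm b(t)|\le e^{-\gamma_\tau t/\tau}|\bm b(0)|+\tfrac{C}{\gamma_\tau}\|\bm r\|_\infty$, and for $t\ge t_1$ the exponential factor is negligible. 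The final limit statement then follows immediately by letting $\tau\to0$ in \eqref{E(t)<}, \eqref{eta-L1}, \eqref{eta-inf} under the hypotheses $\gamma_\tau\to1$, $\E_\tau(0)\to0$.

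The main obstacle I anticipate is organizing the weights in the Lyapunov functional $\mathcal{V}$ so that the constant $C$ in the Gronwall estimate is genuinely uniform in $\tau$ — the naive combination produces a factor $1/\tau$ in front of the cross term coming from the $\tau\bm b'$ equation, and it is the specific pairing of $\gamma_\tau|\bm a|$ with $\tau|\bm b|$ in the definition of $\E_\tau$ (rather than $|\bm a|$ with $|\bm b|$) that makes the $\tau$-dependence cancel; making this cancellation precise, and tracking that the boundary-layer estimate \eqref{eta-inf} only needs $\gamma_\tau$ bounded below (not $\gamma_\tau\to1$), is the delicate bookkeeping. The use of Tihonov's theorem is only invoked as motivation; the quantitative bounds above are proved directly by the energy/Gronwall computation, which is why explicit constants independent of $\tau$ are available.
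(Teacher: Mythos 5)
Your overall architecture matches the paper's: subtract the two systems, exploit Lipschitz continuity and boundedness of $\mathcal{P}^*$ on $\overline{\Omega}_\rho$, derive a Gronwall inequality for a quantity comparable to $\E_\tau$, then integrate the fast equation for the $L^1$ and pointwise-in-time bounds via variation of constants. The expansion you write for $\tau\bm b'$ is correct, and your identification that the pairing $\gamma_\tau|\bm a|$ with $\tau|\bm b|$ is what makes the $\tau$-dependence disappear is exactly the right structural insight. However, the specific Lyapunov functional you propose does not realize that insight, and there are two concrete gaps.

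\textbf{Gap 1: the squared-norm functional cannot give a $\tau$-uniform Gronwall bound.}
With $\mathcal{V}=\tfrac12\gamma_\tau^2|\bm a|^2+\tfrac12\tau^2|\bm b|^2$ you get
\begin{equation*}
\frac{d\mathcal{V}}{dt}\le \gamma_\tau^2|\bm a||\bm b|-\tau\gamma_\tau|\bm b|^2+\tau L|\bm a||\bm b|+\tau C(1-\gamma_\tau+\tau)|\bm b|.
\end{equation*}
The term $\gamma_\tau^2|\bm a||\bm b|=(\gamma_\tau|\bm a|)(\gamma_\tau|\bm b|)$ involves $\gamma_\tau|\bm b|$, which is \emph{not} controlled by $\mathcal{V}^{1/2}\sim\gamma_\tau|\bm a|+\tau|\bm b|$ when $\tau$ is small: to absorb it by the dissipation $-\tau\gamma_\tau|\bm b|^2$ via Young's inequality you must give away a factor $\gamma_\tau/\tau$ in front of $(\gamma_\tau|\bm a|)^2$, and the Gronwall constant blows up. The exact mechanism that saves the day is present only if you \emph{do not square}: using $\tfrac{d}{dt}|\bm\delta|=\langle\bm\delta,\bm\delta'\rangle/|\bm\delta|$ you get the two first-order inequalities
\begin{equation*}
\frac{d}{dt}|\bm\delta_{\bm h}|\le|\bm\delta_{\bm\eta}|,\qquad
\tau\frac{d}{dt}|\bm\delta_{\bm\eta}|\le C\gamma_\tau|\bm\delta_{\bm h}|+C(1-\gamma_\tau)-\gamma_\tau|\bm\delta_{\bm\eta}|+C\tau,
\end{equation*}
and multiplying the first by $\gamma_\tau$ and adding produces an \emph{exact cancellation} of the $\gamma_\tau|\bm\delta_{\bm\eta}|$ terms, leaving $\tfrac{d}{dt}\E_\tau\le C\E_\tau+C(1-\gamma_\tau+\tau)$ with $C$ genuinely independent of $\tau$. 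Squaring destroys this cancellation because $\langle\bm a,\bm b\rangle$ is a cross term, not the diagonal term $|\bm b|^2$ that the dissipation supplies.

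\textbf{Gap 2: the $L^2$-to-$L^1$ detour for \eqref{eta-L1} introduces a term that is not controlled by $\E_\tau(0)$.}
Integrating $\tau\tfrac12\tfrac{d}{dt}|\bm b|^2\le-\gamma_\tau|\bm b|^2+(\cdots)|\bm b|$ produces, after Young and Cauchy--Schwarz, a contribution $\tfrac{\tau}{\gamma_\tau}|\bm b(0)|^2=\tfrac{(\tau|\bm b(0)|)^2}{\tau\gamma_\tau}\le \E_\tau(0)^2/(\tau\gamma_\tau)$, which can blow up as $\tau\to0$ even if $\E_\tau(0)\to0$. The remedy, again, is to work with the first-order quantity: plugging the bound for $|\bm\delta_{\bm h}|$ into $\tau\tfrac{d}{dt}|\bm\delta_{\bm\eta}|\le-\gamma_\tau|\bm\delta_{\bm\eta}|+C(\E_\tau(0)+1-\gamma_\tau+\tau)$ and integrating in time gives $\gamma_\tau\int_0^T|\bm\delta_{\bm\eta}|\le \tau|\bm\delta_{\bm\eta}(0)|+CT(\E_\tau(0)+1-\gamma_\tau+\tau)$, and the first term is $\le\E_\tau(0)$ with no extraneous factor. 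Your variation-of-constants argument for \eqref{eta-inf} is fine and matches the paper once \eqref{E(t)<} is available. In short: replace the squared-energy/Lyapunov step and the $L^2$ detour by the direct first-order inequalities for $|\bm\delta_{\bm h}|$ and $|\bm\delta_{\bm\eta}|$, and the proof closes.
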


\begin{proof}
For $t\in[0,T]$, define
\begin{equation*}
	\bm\delta_{\bm h}(t):=\bm h(t)-\bm h_p(t), \qquad \quad
	\bm\delta_{\bm\eta}(t):=\bm\eta(t)-\bm\eta_p(t).
\end{equation*}
By hypotheses, $\bm h(t),\bm h_p(t)\in\Omega_\rho$ for $t\in[0,T]$, so $|\bm h(t)|$ and $|\bm h_p(t)|$ are uniformly bounded in $[0,T]$. 
Since $\mathcal{P}^*(\bm h)$ is a regular function of $\bm h$, there exists $C>0$ such that 
\begin{equation}\label{P*-stime}
	|\mathcal{P}^*(\bm h_p)|\leq C, \qquad |J\mathcal{P}^*(\bm h_p)|\leq C, \qquad |\mathcal{P}^*(\bm h_p+\bm\delta_{\bm h})-\mathcal{P}^*(\bm h_p)|\leq C|\bm\delta_{\bm h}|,
\end{equation}
for all $t\in[0,T]$. 
Here and in what follows, $C$ is a positive constant independent of $\tau$ whose value may change from line to line. 
We have
\begin{equation*}
	\bm\delta_{\bm h}'=\bm\eta-\bm\eta_p, \qquad \quad 
	\tau\bm\delta_{\bm\eta}'=\mathcal{P}^*(\bm h_p+\bm\delta_p)-\gamma_\tau\mathcal{P}^*(\bm h_p)-\gamma_\tau\bm\delta_{\bm\eta}-\tau J\mathcal{P}^*(\bm h_p)\mathcal{P}^*(\bm h_p).
\end{equation*}
Since $\displaystyle\frac d{dt}|\bm\delta|=\frac{\bm\delta'\cdot\bm\delta}{|\bm\delta|}$ for any $\bm\delta(t)\in\mathbb{R}^N$, 
using \eqref{P*-stime} and Cauchy--Schwarz inequality, we obtain
\begin{equation*}
	\frac d{dt}|\bm\delta_{\bm h}|\leq|\bm\delta_{\bm\eta}|, \qquad 
	\tau\frac d{dt}|\bm\delta_{\bm\eta}|\leq C\gamma_\tau|\bm\delta_{\bm h}|+(1-\gamma_\tau)C-\gamma_\tau|\bm\delta_{\bm\eta}|+\tau C.
\end{equation*}
Summing, one has
\begin{equation*}
	\frac d{dt}\left(\gamma_\tau|\bm\delta_{\bm h}|+\tau|\bm\delta_{\bm\eta}|\right)\leq C\gamma_\tau|\bm\delta_{\bm h}|+C(1-\gamma_\tau+\tau),
\end{equation*}
and so
\begin{equation}\label{d/dt E<}
	\frac d{dt}\E_\tau(t)\leq C\left(\E_\tau(t)+1-\gamma_\tau+\tau\right),  \qquad \quad \mbox{ for } t\in[0,T].
\end{equation}
Integrating \eqref{d/dt E<} and applying Gr\"onwall's Lemma, we obtain \eqref{E(t)<}. 
In particular, from \eqref{E(t)<}, it follows that 
\begin{equation}\label{delta_h}
	\gamma_\tau|\bm\delta_{\bm h}(t)|\leq C(\E_\tau(0)+1-\gamma_\tau+\tau),  \qquad \quad\qquad \mbox{ for } t\in[0,T].
\end{equation}
Substituting \eqref{delta_h} in the equation for $\bm\delta_{\bm\eta}$, we obtain
\begin{equation}\label{delta_eta}
	\tau\frac{d}{dt}|\bm\delta_{\bm\eta}|\leq-\gamma_\tau|\bm\delta_{\bm\eta}|+C(\E_\tau(0)+1-\gamma_\tau+\tau).
\end{equation}
Integrating \eqref{delta_eta}, we obtain \eqref{eta-L1}. 
Furthermore, for \eqref{delta_eta}, we have 
\begin{equation*}
	\frac{d}{dt}\left(\tau e^{\gamma_\tau t/\tau}|\bm\delta_{\bm\eta}(t)|\right)\leq C(\E_\tau(0)+1-\gamma_\tau+\tau)e^{\gamma_\tau t/\tau},
\end{equation*}
and so
\begin{align*}
	|\bm\delta_{\bm\eta}(t)|&\leq\frac C{\gamma_\tau}(\E_\tau(0)+1-\gamma_\tau+\tau)\bigl(1-e^{-\gamma_\tau t/\tau}\bigr)+|\bm\delta_{\bm\eta}(0)|e^{-\gamma_\tau t/\tau} \\
	& \leq\frac C{\gamma_\tau}(\E_\tau(0)+1-\gamma_\tau+\tau) + \E_\tau(0)\frac{e^{-\gamma_\tau t/\tau}}{\tau},
\end{align*}
for $t\in[0,T]$.
Therefore, for any fixed $t_1\in(0,T)$, we obtain \eqref{eta-inf}.
\end{proof}

\section*{Acknowledgements}
We thank the anonymous Referees for the careful review and for the suggestions which help us to improve our paper.

\newpage

\end{document}